\newtheorem{theorem}{Theorem}
\newtheorem{lemma}[theorem]{Lemma}
\newtheorem{corollary}[theorem]{Corollary}
\newtheorem{proposition}[theorem]{Proposition}
\theoremstyle{definition}
\newtheorem{example}[theorem]{Example}
\newtheorem*{acknowledgements*}{Acknowledgements}
\theoremstyle{remark}
\newtheorem{remark}[theorem]{Remark}
\numberwithin{equation}{section}
\numberwithin{theorem}{section}
\newcommand{\R}{\mathbb R}
\newcommand{\C}{\mathbb C}
\title
[Fine spectra of the finite Hilbert transform]
{Fine spectra of the finite Hilbert transform \\ in function spaces}
\author[G.P.  Curbera]{Guillermo P. Curbera}
\address{Facultad de Matem\'aticas \& IMUS,
Universidad de Sevilla, 
Calle Tarfia s/n,  Sevilla 41012, Spain}
\email{curbera@us.es}
\author[S. Okada]{Susumu Okada}
\address{School of Natural Sciences (Maths/ Physics), University of Tasmania, 
Private Bag 37, Hobart, Tas. 7001, Australia}
\email{susumu.okada@utas.edu.au}
\author[W.J. Ricker]{Werner J. Ricker}
\address{Math.--Geogr. Fakult\"at, Katholische Universit\"at
Eichst\"att--Ingolstadt, D--85072 Eichst\"att, Germany}
\email{werner.ricker@ku.de}
\thanks{The first author acknowledges the support  of 
of PGC2018-096504-B-C31, FQM-262 and Feder-US-1254600 (Spain).}
\date{\today}
\subjclass[2020]{Primary 44A15, 46E30; Secondary  47A53, 47B34.}
\keywords{Finite Hilbert transform, rearrangement invariant space, Boyd indices, spectrum.}
\begin{document}

\begin{abstract}
We investigate the spectrum and fine spectra of the finite Hilbert transform acting on rearrangement
invariant spaces over $(-1,1)$ with non-trivial Boyd indices, thereby extending
Widom's results for $L^p$ spaces. In the case when these
indices coincide, a full description of the spectrum and fine spectra is given.
\end{abstract} 

\maketitle


\section{Introduction}
\label{S1}


An important part of the theory of singular integral operators (with piecewise continuous coefficients)
acting on function spaces defined over curves is to understand their spectrum (essential, local, etc.),
which can be a rather complicated set. It depends very much on the domain space of the operator;
see \cite{bk}, \cite{k1}, \cite{k2}, and the references therein. The aim of this paper is to investigate
the spectra of one (particular, but classical) singular integral operator acting  in a class of rearrangement
invariant  spaces over  a particularly simple curve, namely the interval $(-1,1)$.

The finite Hilbert transform $T(f)$  of $f\in L^1(-1,1)$ is the principal value integral
\begin{equation}\label{T}
(T(f))(t)=\lim_{\varepsilon\to0^+} \frac{1}{\pi i}
\left(\int_{-1}^{t-\varepsilon}+\int_{t+\varepsilon}^1\right) \frac{f(x)}{x-t}\,dx, 
\end{equation}
which exists for  almost all  $t\in(-1,1)$ and is a measurable function.
A celebrated theorem of M. Riesz states, for each $1<p<\infty$, that
the Hilbert transform operator  maps $L^p(\R)$ continuously  into 
itself. We will consider  the linear operator $f\mapsto T(f)$, 
which maps $L^p(-1,1)$ continuously  into 
itself (denote this operator by $T_p$ and the space by $L^p$).
The operator $T$ has important applications in aerodynamics, 
via  the airfoil equation, \cite{cheng-rott}, 
\cite[Ch.11]{king}, \cite{reissner},  \cite{tricomi-1},
\cite{tricomi}. More recently, applications have been found 
to problems arising in image reconstruction; 
see, for example, \cite{bertola-etal}, \cite{katsevich-tovbis}, \cite{sidky-etal}. 
Our results and techniques deal with the spectral theory of $T$ beyond the $L^p$-setting. Hopefully this forms a basis for future research
concerning this classical operator to other areas of mathematics. For instance, aspects from operator theory, such as inversion theory
and Fredholm properties, and from integral representations (via vector measures), already occur in the recent articles \cite{curbera-okada-ricker-1}, 
\cite{curbera-okada-ricker-2}.

The spectrum $\sigma(T_{p})$ of $T_p$ was completely identified by Widom in 1960, 
who also gave  the decomposition of the spectrum 
into its point spectrum $\sigma_{\mathrm{pt}}(T_p)$,  continuous spectrum
$\sigma_{\mathrm{c}}(T_p)$ and  residual spectrum  $\sigma_{\mathrm{r}}(T_p)$,
\cite[\S5]{widom}; see also \cite[\S13.6]{jorgens}. Since
the operators $T_p$ and their spectra play a special role in this paper it is
worthwhile to describe Widom's results for $L^p$ in detail.

For $1<p<\infty$, consider the subset of $\C$ given by
\begin{equation}\label{rp}
\mathcal{R}_p:=\big\{\pm1\big\}\cup
\left\{\lambda\in\C: \frac{1}{2\pi} 
\left|\arg\bigg(\frac{1+\lambda}{1-\lambda}\bigg)\right|\le \left|\frac12-\frac1p\right|\right\}.
\end{equation}
Then $\mathcal{R}_p$ is the region bounded by both, the circular arc with end-points $\pm1$ 
which passes through $i\cot(\pi/p)$, together with the circular arc having end-points $\pm1$ 
which passes through $i\cot(\pi/p')$, where the conjugate index $p'$ is specified by $1/p+1/p'=1$;
see the diagram below. It is an important feature  that 
\begin{equation*}\label{rp2}
\mathcal{R}_p=\mathcal{R}_{p'}.
\end{equation*}
Note that  $\mathcal{R}_2=[-1,1]$ and, for $1<p<\infty$,
that the set $\mathcal{R}_p$ increases as $|p-2|$ increases.
Observe also that $\bigcup_{1<p<\infty} \mathcal{R}_p$ equals 
$\C\setminus\big\{(-\infty,-1)\cup(1,\infty)\big\}$.


\begin{center}
\begin{tikzpicture}
 \draw[-,xshift=-0cm] (-2,0) -- coordinate (x axis mid) (2,0);
 \draw[-,xshift=-0cm] (0,-3) -- coordinate (y axis mid)(0,2.9);
\draw (0,-1.1) circle (1.4);
\draw (0,1.1) circle (1.4);
\draw (-1.6,0) node[above]{$-1$} -- (1.4,0)  node[above]{$1$};
\draw (0,2.8) node[left]{$i\cot(\pi/p)$};     
\draw (0,-2.8) node[left]{$i\cot(\pi/p')$};  
\draw (2.4,2) node[below]{$\mathcal{R}_p=\mathcal{R}_{p'}$} ;
\end{tikzpicture}
\end{center}


The following result is due to Widom; see  Remark 1(2) and Remark 2 (pp.\ 156-157) of \cite{widom}.
The interior of a set $B\subseteq \C$ is denoted by $\mathrm{int}(B)$
and its boundary by $\partial B$.

\begin{theorem}[Widom]\label{t1}
Let $1<p<\infty$. For the operator $T_p\colon L^p\to L^p$ we have 
$$
\sigma(T_p)=\mathcal{R}_p.
$$ 
Regarding the fine spectra of $T_{p}$ the following identifications hold.
\begin{itemize}
\item[(a)] Let $1<p<2$. Then $\sigma_{\emph{pt}}(T_p) =\emph{int}(\mathcal{R}_p)$,
$\sigma_{\mathrm{r}}(T_p)=\emptyset$ and $\sigma_{\mathrm{c}}(T_p)=\partial\mathcal{R}_p$.
\item[(b)] Let $p=2$. Then $\sigma_{\emph{pt}}(T_2) =\emptyset$,
 $\sigma_{\mathrm{r}}(T_2)=\emptyset$ and $\sigma_{\mathrm{c}}(T_2)=\partial\mathcal{R}_2=\mathcal{R}_2$.
\item[(c)] Let $2<p<\infty$. Then $\sigma_{\emph{pt}}(T_p) =\emptyset$,
$\sigma_{\mathrm{r}}(T_p)=\emph{int}(\mathcal{R}_p)$ and $\sigma_{\mathrm{c}}(T_p)=\partial\mathcal{R}_p$.
\end{itemize}

\end{theorem}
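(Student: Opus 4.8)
The plan is to convert the resolvent equation into a scalar Riemann--Hilbert problem and to read off every piece of spectral data from its index. For $f\in L^p$ set $\Phi(z)=\frac{1}{2\pi i}\int_{-1}^1\frac{f(x)}{x-z}\,dx$, the Cauchy transform, holomorphic on $\C\setminus[-1,1]$; its boundary values satisfy the Plemelj relations $\Phi^+-\Phi^-=f$ and $\Phi^++\Phi^-=T(f)$. Hence $(\lambda I-T_p)f=g$ is equivalent to the jump condition $\Phi^+-G(\lambda)\Phi^-=g/(\lambda-1)$ on $(-1,1)$, with the constant coefficient $G(\lambda)=\frac{\lambda+1}{\lambda-1}$. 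I would solve this in the classical way, through the canonical function $X_\lambda(z)=(z-1)^{\alpha}(z+1)^{-1-\alpha}$, where $\alpha=\alpha(\lambda)$ is determined by $e^{2\pi i\alpha}=G(\lambda)$ (a branch being fixed) and by the decay $X_\lambda(z)=O(1/z)$ at infinity. The key observation is that $f=\Phi^+-\Phi^-$ then carries the endpoint behaviour $|f(t)|\sim|1-t|^{\mathrm{Re}\,\alpha}$ near $1$ and $|f(t)|\sim|1+t|^{-1-\mathrm{Re}\,\alpha}$ near $-1$, so that $L^p$-integrability forces
\[
-\tfrac1p<\mathrm{Re}\,\alpha<-\tfrac1{p'},
\]
an interval that is non-empty exactly when $1<p<2$. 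Writing $\mathrm{Re}\,\alpha=\frac{1}{2\pi}\arg G(\lambda)+k$ with $k\in\Z$ and recalling $G(\lambda)=-\frac{1+\lambda}{1-\lambda}$, an elementary computation shows that this window is met (for a suitable branch) precisely when $\frac{1}{2\pi}\big|\arg\frac{1+\lambda}{1-\lambda}\big|<\big|\frac12-\frac1p\big|$, that is, when $\lambda\in\mathrm{int}(\mathcal{R}_p)$.

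Equipped with this dictionary I would treat the point spectrum first. Taking $g=0$, the homogeneous problem has the solution $\Phi=X_\lambda$, whose jump $\phi_\lambda=\Phi^+-\Phi^-$ is a genuine eigenfunction of $T_p$ in $L^p$ exactly when the window above is populated. Thus $\sigma_{\mathrm{pt}}(T_p)=\mathrm{int}(\mathcal{R}_p)$ for $1<p<2$, while for $p\ge2$ the window is empty or degenerate and $\sigma_{\mathrm{pt}}(T_p)=\emptyset$. (For instance $\lambda=0$ gives $\alpha=-\tfrac12$ and the familiar kernel element $\phi_0(t)=(1-t^2)^{-1/2}$, which lies in $L^p$ precisely for $p<2$.)

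Next I would show $\sigma(T_p)=\mathcal{R}_p$ by building the resolvent off $\mathcal{R}_p$. Reducing $\Phi/X_\lambda$ to an additive jump problem and solving it by a Cauchy integral produces Tricomi's explicit inversion formula, in which $(\lambda I-T_p)^{-1}$ is realised as the finite Hilbert transform conjugated by the power weight $|X_\lambda^+|$, plus a lower-rank correction. Its boundedness on $L^p$ reduces, by the power-weight form of the M.\ Riesz / Hunt--Muckenhoupt--Wheeden theorem, to an $A_p$-condition on the endpoint exponents of $X_\lambda$; carrying out this computation, with the branch of $\alpha(\lambda)$ dictated by the index of the problem, identifies the region of boundedness with $\C\setminus\mathcal{R}_p$, so that $\C\setminus\mathcal{R}_p\subseteq\rho(T_p)$ and $\sigma(T_p)\subseteq\mathcal{R}_p$. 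As $\sigma(T_p)$ is closed and, for $p\ne2$, $\mathcal{R}_p=\overline{\mathrm{int}(\mathcal{R}_p)}$, the previous paragraph supplies the reverse inclusion; for $p=2$, where $\mathrm{int}(\mathcal{R}_2)=\emptyset$, the classical Koppelman--Pincus diagonalisation places $[-1,1]=\mathcal{R}_2$ inside $\sigma(T_2)$. I expect the real obstacle to sit precisely here: proving the sharp weighted $L^p$-bounds and analysing the borderline $\lambda\in\partial\mathcal{R}_p$, where $\mathrm{Re}\,\alpha$ meets a critical value, the weight just fails the $A_p$-condition, the inverse ceases to be bounded, and $\lambda I-T_p$ is injective with dense but proper range.

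Finally I would separate the residual and continuous parts by duality. Interchanging the order of integration yields $\langle T_pf,g\rangle=\langle f,T_{p'}g\rangle$, so the Banach-space transpose of $\lambda I-T_p$ is $\lambda I-T_{p'}$, and the range of $\lambda I-T_p$ fails to be dense exactly when $\lambda\in\sigma_{\mathrm{pt}}(T_{p'})$. Using $\mathcal{R}_p=\mathcal{R}_{p'}$ together with the point-spectrum computation then settles everything: for $2<p<\infty$ one has $\sigma_{\mathrm{pt}}(T_p)=\emptyset$ but $\sigma_{\mathrm{pt}}(T_{p'})=\mathrm{int}(\mathcal{R}_{p'})=\mathrm{int}(\mathcal{R}_p)$, so $\lambda I-T_p$ is always injective and has non-dense range precisely on $\mathrm{int}(\mathcal{R}_p)$, giving $\sigma_{\mathrm{r}}(T_p)=\mathrm{int}(\mathcal{R}_p)$; for $1<p<2$ one has $\sigma_{\mathrm{pt}}(T_{p'})=\emptyset$, whence $\sigma_{\mathrm{r}}(T_p)=\emptyset$; and for $p=2$ both vanish. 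The continuous spectrum is then the remainder, $\sigma_{\mathrm{c}}(T_p)=\sigma(T_p)\setminus\big(\sigma_{\mathrm{pt}}(T_p)\cup\sigma_{\mathrm{r}}(T_p)\big)$, which equals $\partial\mathcal{R}_p$ for $p\ne2$ and equals $\mathcal{R}_2=[-1,1]$ for $p=2$, completing all three cases.
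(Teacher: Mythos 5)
The paper offers no proof of this statement: it is quoted as Widom's theorem, with the proof referred to \cite[Remarks 1(2) and 2, pp.\ 156--157]{widom} and \cite[\S 13.6]{jorgens}. Your proposal is in substance a reconstruction of exactly that classical argument --- the reduction of $(\lambda I-T_p)f=g$ to a constant-coefficient Riemann--Hilbert problem with jump $G(\lambda)=(\lambda+1)/(\lambda-1)$, the canonical function $(z-1)^{\alpha}(z+1)^{-1-\alpha}$, the $L^p$-window $-1/p<\mathrm{Re}\,\alpha<-1/p'$ for the endpoint exponents, and the resolvent realised as a weighted finite Hilbert transform --- so it matches the cited source's approach rather than deviating from it, and the individual reductions you state (Plemelj relations, the window computation, the identification of the window condition with $\lambda\in\mathrm{int}(\mathcal{R}_p)$) are correct.

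Two caveats are worth recording. First, there is a sign error in the duality step: with the bilinear pairing $\langle f,g\rangle=\int_{-1}^{1}fg$, interchanging the order of integration gives $\langle T_pf,g\rangle=-\langle f,T_{p'}g\rangle$ (the paper records this as $(T_X)^*=-T_{X'}$), so the transpose of $\lambda I-T_p$ is $\lambda I+T_{p'}$, and non-density of the range is equivalent to $-\lambda\in\sigma_{\mathrm{pt}}(T_{p'})$, not $\lambda\in\sigma_{\mathrm{pt}}(T_{p'})$. The slip is harmless only because $\sigma_{\mathrm{pt}}(T_{p'})=\mathrm{int}(\mathcal{R}_{p'})$ is invariant under $\lambda\mapsto-\lambda$, which your own point-spectrum computation establishes; the sign should nevertheless be carried. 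Second, the analytically hard content is announced rather than executed: the branch/index selection that makes the weighted inverse bounded precisely off $\mathcal{R}_p$, the failure of the $A_p$-condition on $\partial\mathcal{R}_p$, and the classification of \emph{all} $L^p$ solutions of the homogeneous Riemann--Hilbert problem (needed both for $\sigma_{\mathrm{pt}}(T_p)=\emptyset$ when $p\ge2$ and for the one-dimensionality of the eigenspaces) are precisely where Widom's work lies; likewise the inclusion $[-1,1]\subseteq\sigma(T_2)$ is outsourced to Koppelman--Pincus rather than proved. As a blueprint the proposal is sound; as a proof it defers these steps to the very literature it is meant to replace.
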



The following table summarizes the previous result.

\medskip
\begin{center}
\begin{tabular}{ | c | c | c | c |}
\hline
   	$\sigma(T_p)=\mathcal{R}_p$  & $\sigma_{\mathrm{pt}}(T_p) $ & 
   	$\sigma_{\mathrm{r}}(T_p) $ & $\sigma_{\mathrm{c}}(T_p) $
\\ \hline
      	&&&
\\ \hline
     	$1<p<2$ & $\mathrm{int}(\mathcal{R}_p)$ & $\emptyset$ & $\partial\mathcal{R}_p$ 
\\ \hline
     	$p=2$ & $\emptyset$ & $\emptyset$ & $\mathcal{R}_2=[-1,1]$  
\\ \hline
     	$2<p<\infty$ &$\emptyset$  & $\mathrm{int}(\mathcal{R}_p)$ & $\partial\mathcal{R}_p$
\\ \hline
\end{tabular}
\end{center}
\bigskip


As a consequence of Widom's result, the set $\mathcal{A}$ of all complex 
numbers which occur as eigenvalues for $T$ when $T$ acts  on some space $L^p$, for $1<p<\infty$, is
given by
\begin{equation}\label{AA}
\mathcal{A}=\bigcup_{1<p<\infty} \mathrm{int}(\mathcal{R}_p)=\C\setminus\big\{(-\infty,-1]\cup[1,\infty)\big\}.
\end{equation}

A result of J\"orgens from 1970 identifies the   set of all possible 
eigenfunctions for $T$ when $T$ acts over $\bigcup_{p>1}L^p$; 
see \cite[Theorem 13.9]{jorgens} for the English translation.

\begin{theorem}[J\"orgens]\label{t2}
For each  $\lambda\in\mathcal{A}$, the
corresponding eigenspace of 
$T$ is the 1-dimensional space $\langle\xi_\lambda\rangle\subseteq L^p$
spanned by the canonical eigenfunction
\begin{equation}\label{xi}
\xi_\lambda(x):=\frac{1}{(1-x^2)^{1/2}}\left(\frac{1-x}{1+x}\right)^{z(\lambda)},\quad  |x|<1,
\end{equation}
for all $1<p<\infty$ such that $\xi_\lambda\in L^p$,  where the function $z(\lambda)$ is given by
\begin{equation}\label{z}
z(\lambda):=\frac{1}{2\pi i}\log\left(\frac{1+\lambda}{1-\lambda}\right),\qquad z(0)=0.
\end{equation}
\end{theorem}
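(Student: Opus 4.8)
The plan is to convert the eigenvalue equation $T(f)=\lambda f$ into a homogeneous boundary value (Riemann--Hilbert) problem for the Cauchy transform and to solve it explicitly. For $g\in L^1(-1,1)$ write $\mathcal{C}(g)$ for its Cauchy transform
\[
(\mathcal{C}g)(w):=\frac{1}{2\pi i}\int_{-1}^1\frac{g(x)}{x-w}\,dx,\qquad w\in\C\setminus[-1,1],
\]
which is analytic off $[-1,1]$ and vanishes at $\infty$. The Plemelj--Sokhotski formulas give, for a.e.\ $t\in(-1,1)$, that $(\mathcal{C}g)^+(t)-(\mathcal{C}g)^-(t)=g(t)$ and, comparing with \eqref{T}, that $(\mathcal{C}g)^+(t)+(\mathcal{C}g)^-(t)=(T(g))(t)$, where $\pm$ denote the boundary values from the upper/lower half-plane. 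Hence $T(f)=\lambda f$ is equivalent to the multiplicative jump condition $(\mathcal{C}f)^+=-\tfrac{1+\lambda}{1-\lambda}\,(\mathcal{C}f)^-$ on $(-1,1)$.

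First I would verify that $\xi_\lambda$ is an eigenfunction whenever $\xi_\lambda\in L^p$ (so $\xi_\lambda\in L^1$ and $T(\xi_\lambda)$ is defined). Writing $z=z(\lambda)$, so that $\exp(2\pi i z)=\tfrac{1+\lambda}{1-\lambda}$, I introduce the sectionally analytic function
\[
\Omega(w):=(w-1)^{z-1/2}(w+1)^{-z-1/2},\qquad w\in\C\setminus[-1,1],
\]
with principal branches. Because the exponents satisfy $(z-\tfrac12)+(-z-\tfrac12)=-1\in\Z$, the cuts of the two factors cancel along $(-\infty,-1)$, so $\Omega$ is single-valued and analytic on $\C\setminus[-1,1]$ with $\Omega(w)\sim 1/w$ at $\infty$. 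Computing the limiting arguments of $w\mp1$ as $w\to t\pm i0$ for $t\in(-1,1)$ yields $\Omega^\pm(t)=\mp i\,e^{\pm i\pi z}\,\xi_\lambda(t)$, whence
\[
\Omega^+-\Omega^-=-2i\cos(\pi z)\,\xi_\lambda,\qquad \Omega^++\Omega^-=2\sin(\pi z)\,\xi_\lambda.
\]
Since $\Omega$ vanishes at $\infty$ it equals the Cauchy transform of its own jump, so the Plemelj sum formula gives $\Omega^++\Omega^-=T(\Omega^+-\Omega^-)=-2i\cos(\pi z)\,T(\xi_\lambda)$; comparing the two displayed identities yields $T(\xi_\lambda)=i\tan(\pi z)\,\xi_\lambda$. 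Finally $i\tan(\pi z)=\dfrac{e^{2\pi i z}-1}{e^{2\pi i z}+1}=\lambda$, so that $T(\xi_\lambda)=\lambda\xi_\lambda$, as required.

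For one-dimensionality I would take any $f\in L^p$ with $T(f)=\lambda f$ and set $h:=\mathcal{C}(f)/\Omega$, legitimate since $\Omega$ has no zeros off the cut. Both $\mathcal{C}(f)$ and $\Omega$ obey the \emph{same} multiplicative jump relation across $(-1,1)$, so $h^+=h^-$ there and $h$ continues analytically across the open interval. Near $\infty$ one has $\mathcal{C}(f)(w)=O(1/w)$ while $\Omega(w)\sim 1/w$, so $h$ is bounded at $\infty$; and the endpoint behavior of $\mathcal{C}(f)$ imposed by $f\in L^p$ is no stronger than the algebraic singularity of $\Omega$ at $\pm1$, so the isolated singularities of $h$ at $\pm1$ are removable. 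Thus $h$ is a bounded entire function, hence constant by Liouville, giving $\mathcal{C}(f)=c\,\Omega$ and therefore $f=(\mathcal{C}f)^+-(\mathcal{C}f)^-=c(\Omega^+-\Omega^-)=-2ic\cos(\pi z)\,\xi_\lambda\in\langle\xi_\lambda\rangle$.

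The main obstacle is the endpoint analysis in the last paragraph. One must fix the admissible class of boundary behavior (integrable, algebraically controlled singularities at $\pm1$ compatible with $L^p$), justify both the Cauchy-integral representation and the Plemelj formulas for functions with such singularities, and verify that the exponent $\operatorname{Re}(z(\lambda))=\tfrac{1}{2\pi}\arg\tfrac{1+\lambda}{1-\lambda}$ governing the singularity of $\Omega$ at $\pm1$ is precisely the one making $\xi_\lambda\in L^p$ equivalent to $\lambda\in\mathrm{int}(\mathcal{R}_p)$ --- exactly the inequality in \eqref{rp}. This integrability bookkeeping is what forbids extra solutions with poles at the endpoints (it pins the index of the scalar Riemann--Hilbert problem to zero); once it is in place, the removability of the singularities of $h$ and the Liouville conclusion are routine.
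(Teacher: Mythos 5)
The paper does not prove this statement at all: Theorem \ref{t2} is quoted as a known result, with the proof delegated to J\"orgens \cite[Theorem 13.9]{jorgens}. So there is no internal argument to compare against; what can be said is that your Riemann--Hilbert outline is in substance the classical proof (going back to S\"ohngen/Tricomi/Muskhelishvili) that underlies the treatments of Widom and J\"orgens. Your computations are correct for the normalization \eqref{T}: the eigenvalue equation is equivalent to the jump relation $(\mathcal{C}f)^+=-\tfrac{1+\lambda}{1-\lambda}(\mathcal{C}f)^-$, the boundary values of $\Omega$ are indeed $\Omega^{\pm}=\mp i e^{\pm i\pi z}\xi_\lambda$, and $i\tan(\pi z)=\lambda$ follows from $e^{2\pi i z}=\tfrac{1+\lambda}{1-\lambda}$; the Liouville argument then gives one-dimensionality. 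This route also has the virtue of making visible exactly where the arithmetic of $\Re(z(\lambda))$, i.e.\ the region \eqref{rp}, enters.

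Two steps that you compress are where the genuine function theory lives, and both deserve explicit justification. First, passing from a.e.\ equality $h^+=h^-$ of nontangential boundary values to analytic continuation of $h$ across $(-1,1)$ is not automatic for arbitrary analytic functions; it requires a Privalov/Smirnov-class argument, available here because $\mathcal{C}f$ is the Cauchy integral of an $L^1$ density and $1/\Omega$ is bounded near interior points of the cut. Second, the endpoint removability can be made quantitative without invoking Muskhelishvili's pointwise endpoint asymptotics: H\"older's inequality gives $|\mathcal{C}f(u)|\lesssim \|f\|_{L^p}\,d(u)^{-1/p}$ with $d(u)$ the distance to $[-1,1]$, the factor $1/\Omega$ vanishes at $\pm1$ to the order $\tfrac12\mp\Re z(\lambda)>0$, and a subharmonic mean-value estimate over discs $D\bigl(w,\tfrac12|w\mp1|\bigr)$ yields $|h(w)|\lesssim|w\mp1|^{\frac12\mp\Re z(\lambda)-\frac1p}$; Riemann's removability criterion then applies precisely because $|\Re z(\lambda)|<\tfrac12$ for $\lambda\in\mathcal{A}$ and $p>1$. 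Note that this argument uses only $f\in L^p$, not $\xi_\lambda\in L^p$: the hypothesis $\xi_\lambda\in L^p$ enters solely at the end, to decide whether the one-dimensional solution space $\langle\xi_\lambda\rangle$ actually meets $L^p$ nontrivially. With these two points supplied, your proof is complete and is essentially the one behind the cited reference.
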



We write $\mathcal{E}=\{\xi_\lambda:\lambda\in\mathcal{A}\}$. 
Thus, the  $L^p$-theory concerning the spectrum of $T$ is well understood.

We turn our viewpoint  to the finite Hilbert transform $T$
acting on rearrangement invariant (briefly, r.i.) spaces on $(-1,1)$.
A classical result of Boyd states that the Hilbert transform acts continuously in
a r.i.\ space $X$ over $\R$ if and only if the lower and upper Boyd indices 
$\underline{\alpha}_X$ and $\overline{\alpha}_X$ of $X$ are non-trivial, that is, if
$0<\underline{\alpha}_X\le \overline{\alpha}_X<1$; see \cite[Theorem III.5.18]{bennett-sharpley}.
The same characterization is valid for the finite Hilbert transform $T$ given
by \eqref{T} when it acts in r.i.\ spaces $X$ over $(-1,1)$, 
\cite[pp.170-171]{krein-petunin-semenov},
in which case we denote $T$ by $T_X\colon X\to X$. 
This class of r.i.\ spaces is the most  adequate replacement  for the 
$L^p$-spaces when undertaking a further study of the finite Hilbert transform $T$. 
This is due to two important features: 
$T\colon X\to X$ is injective if and only if $L^{2,\infty}(-1,1)\not\subseteq X$ 
and (for $X$ separable) $T\colon X\to X$ has a 
non-dense range if and only if $X \subseteq L^{2,1}(-1,1)$, 
where $L^{2,1}(-1,1)$ and $L^{2,\infty}(-1,1)$ are 
the usual Lorentz spaces.
The class of r.i.\ spaces over $(-1,1)$ with non-trivial Boyd indices
is the one used throughout this paper.
It is closely connected to the family of $L^p$-spaces via the following result, 
\cite[Proposition 2.b.3]{lindenstrauss-tzafriri}.

\begin{lemma}\label{l1}
Let $X$ be any r.i.\ space  such that 
$0<\alpha<\underline{\alpha}_X\le \overline{\alpha}_X<\beta<1$.
Then there exist $p,q$ satisfying $1/\beta<p<q<1/\alpha$ such that
$L^q\subseteq X \subseteq L^p$ with
continuous  inclusions.
\end{lemma}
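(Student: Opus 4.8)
The plan is to reduce the two-sided embedding to two independent one-sided inclusions, each governed by one of the Boyd indices, and only then to choose $p,q$ in the prescribed ranges. Recall that the Boyd indices are defined through the dilation operators $E_s$ acting on the decreasing rearrangements of functions over $(0,2)$ (the measure of $(-1,1)$), with $h_X(s):=\|E_s\|_{X\to X}$; the function $\log h_X$ is subadditive in $\log s$, and $\underline{\alpha}_X,\overline{\alpha}_X$ are the corresponding limiting slopes. For $X=L^r$ one has $h_{L^r}(s)=s^{1/r}$ and $\underline{\alpha}_{L^r}=\overline{\alpha}_{L^r}=1/r$, so on a finite measure space the chain $L^q\subseteq L^r\subseteq L^p$ corresponds exactly to $1/q\le 1/r\le 1/p$. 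This motivates establishing the two implications
\begin{equation*}
\overline{\alpha}_X<\tfrac1p \ \Longrightarrow\ X\subseteq L^p, \qquad \tfrac1q<\underline{\alpha}_X \ \Longrightarrow\ L^q\subseteq X,
\end{equation*}
both with continuous inclusions, and then selecting the exponents accordingly.

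First I would extract quantitative dilation bounds from the hypotheses. Subadditivity of $\log h_X$ together with $\alpha<\underline{\alpha}_X\le\overline{\alpha}_X<\beta$ furnishes a constant $C$ and exponents $\sigma,\tau$ with $\overline{\alpha}_X<\tau<\beta$ and $\alpha<\sigma<\underline{\alpha}_X$ bounding the growth of $h_X$ at the two ends of $(0,\infty)$; since $\phi_X(st)\le h_X(s)\,\phi_X(t)$ for the fundamental function $\phi_X$, these transfer to power bounds on $\phi_X$. I then fix $p,q$ with
\begin{equation*}
\overline{\alpha}_X<\tfrac1p<\beta, \qquad \alpha<\tfrac1q<\underline{\alpha}_X,
\end{equation*}
so that $1/\beta<p<1/\overline{\alpha}_X$ and $1/\underline{\alpha}_X<q<1/\alpha$; because $\underline{\alpha}_X\le\overline{\alpha}_X$ gives $p<1/\overline{\alpha}_X\le 1/\underline{\alpha}_X<q$, the required ordering $1/\beta<p<q<1/\alpha$ then holds automatically.

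For the embeddings I would argue by a dyadic decomposition of $(0,2)$ into the intervals $I_k=(2^{-k},2^{-k+1})$, $k\ge 0$, combined with the elementary estimate $f^*(t)\,\phi_X(t)\le\|f\|_X$. For $X\subseteq L^p$ this bounds $\int_{I_k}(f^*)^p$ by a geometric term whose ratio is strictly less than $1$ precisely because $\overline{\alpha}_X<1/p$, so summation over $k$ yields $\|f\|_{L^p}\le C\|f\|_X$. The inclusion $L^q\subseteq X$ follows symmetrically, estimating $f^*(t)\le t^{-1/q}\|f\|_{L^q}$ and summing $\sum_k\|f^*\chi_{I_k}\|_X\le\sum_k f^*(2^{-k})\phi_X(2^{-k+1})$, the series converging exactly because $1/q<\underline{\alpha}_X$; alternatively one may pass to the associate space $X'$, whose Boyd indices are $1-\overline{\alpha}_X$ and $1-\underline{\alpha}_X$, and dualise the first implication. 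The main obstacle is the passage from the dilation-operator bounds to these integrability statements: one must track which index governs the behaviour at the origin, verify that $\phi_X$ inherits the sharp power bound from $h_X$ rather than a weaker one, and ensure the geometric series converge with constants independent of $f$. Once these estimates are secured, the two inclusions together with the choice of $p,q$ complete the proof.
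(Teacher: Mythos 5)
The paper itself gives no proof of Lemma \ref{l1}: it is quoted from \cite[Proposition 2.b.3]{lindenstrauss-tzafriri}. Your blind proof is therefore a self-contained reconstruction of that standard argument, and it is correct. The reduction to the two one-sided implications ($\overline{\alpha}_X<1/p \Rightarrow X\subseteq L^p$ and $1/q<\underline{\alpha}_X \Rightarrow L^q\subseteq X$) is exactly what is needed, and your mechanism for each is sound: submultiplicativity of $h_X$ gives power bounds on the dilation norms, these transfer to the fundamental function in the right direction because $\underline{\alpha}_X\le\underline{\beta}_X\le\overline{\beta}_X\le\overline{\alpha}_X$, so $\varphi_X(t)\ge c\,t^{\tau}$ for any $\tau>\overline{\alpha}_X$ and $\varphi_X(t)\le C\,t^{\sigma}$ for any $\sigma<\underline{\alpha}_X$ on $(0,2]$, and then your dyadic sums converge geometrically: the estimate $f^*(t)\varphi_X(t)\le\|f\|_X$ yields $X\subseteq L^p$, while the block decomposition of $f^*$ yields $L^q\subseteq X$ (in fact your second argument proves the stronger inclusion $L^{q,\infty}\subseteq X$, since it only uses $f^*(t)\le t^{-1/q}\|f\|_{L^q}$). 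This is the same circle of ideas the paper deploys later in its own proof of Lemma \ref{l4}, where the chains $L^{1/(\underline{\beta}_X-\varepsilon),1}\subseteq\Lambda(X)\subseteq X$ and $X\subseteq \mathcal{M}_{\varphi_X}\subseteq L^{1/(\overline{\beta}_X+\varepsilon),\infty}$ are established; so your route makes Lemma \ref{l1} self-contained using machinery the paper needs anyway, at the cost of length, whereas the paper's citation buys brevity.

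One small repair to the quantifiers: as written, you choose $\tau\in(\overline{\alpha}_X,\beta)$ and then $p$ with $1/p\in(\overline{\alpha}_X,\beta)$ independently, but the geometric series for $X\subseteq L^p$ requires $\tau<1/p$ (and similarly the series for $L^q\subseteq X$ requires $\sigma>1/q$), which these independent choices do not guarantee. Fix this by choosing $p,q$ first and then taking $\overline{\alpha}_X<\tau<1/p$ and $1/q<\sigma<\underline{\alpha}_X$, or by imposing $\tau<1/p<\beta$ and $\alpha<1/q<\sigma$ from the outset; nothing else in the argument changes, and the ordering $1/\beta<p<1/\overline{\alpha}_X\le 1/\underline{\alpha}_X<q<1/\alpha$ still holds.
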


In particular, the previous result implies that

\begin{equation*}
\mathcal{A}=\Big\{\lambda\in\C: T(f)=\lambda f \text{ for some }   f\in X\setminus\{0\} 
\text{ and $X$ with } 
0<\underline{\alpha}_X\le \overline{\alpha}_X<1\Big\},
\end{equation*}
and that
\begin{equation*}
\mathcal{E}=\Big\{\xi\in X: 0<\underline{\alpha}_X\le \overline{\alpha}_X<1,  
\exists \lambda\in\mathcal{A} \text{ such that $\xi=\xi_\lambda$}\Big\}.
\end{equation*}
%


The aim of this paper is to study the spectrum $\sigma(T_X)$  of  
$T_X$ when $T_X$ acts on a r.i.\ space $X$ with non-trivial
Boyd indices, as well as its fine spectra, namely the 
point spectrum $\sigma_{\mathrm{pt}}(T_X)$,   the continuous spectrum
$\sigma_{\mathrm{c}}(T_X)$ and   the residual spectrum  $\sigma_{\mathrm{r}}(T_X)$. 
We are unaware of any advances in this direction beyond the work
of Widom and J\"orgens in the 1960--70's for the spaces $L^p$, $1<p<\infty$.

Section \ref{S2} is devoted to preliminaries concerning r.i.\ spaces, their indices, and  the finite Hilbert
transform.

In Section \ref{S3} we present the strong symmetry properties of the spectrum and fine spectra of $T_X$
(cf. Proposition \ref{p1}). In addition, we identify its point spectrum 
$\sigma_{\mathrm{pt}}(T_X)$ (cf. Proposition \ref{p2})
which turns out to be one of only three possible alternatives:
$\mathcal{R}_{p_X}\setminus\{\pm1\}$ or  $\mathrm{int}(\mathcal{R}_{p_X})$ 
or $\emptyset$, where
$p_X\in (1,\infty)$  is an index associated to $X$; see \eqref{px}.

Section \ref{S4} is devoted to identifying the  spectrum and fine spectra of $T$ when it acts in the
class of Lorentz $L^{p,r}$ spaces, for $1<p<\infty$ and $1\le r<\infty$; see Theorem \ref{t3}. 
Note that the non-separable spaces $L^{p,\infty}$ are not considered (see, in any case,
Remark \ref{r5}). This is because we mostly focus our study  on the separable r.i.\ spaces.
The reason is,  if $X$ is separable, then we are in the desirable situation
that the topological dual $X^*$ of $X$  is isometrically isomorphic to the associate space of $X$, i.e., $X^*=X'$, which is also a r.i.\ space
with non-trivial Boyd indices. Moreover, the dual operator $(T_X)^*=-T_{X'}$.

In Section \ref{S5} we exhibit further properties of the spectrum and fine spectra of $T_X$.
In Proposition \ref{p7} it is established that always $0\in\sigma(T_X)$ and shown that the fine spectrum
is strongly influenced by which part of it contains $0$. 
This leads to the somewhat unexpected situation that only three mutually distinct
cases can arise (for \textit{all} $X$), namely
\begin{equation}\label{3cases}
\sigma(T_X)=\sigma_{\mathrm{pt}}(T_X)\cup\sigma_{\mathrm{c}}(T_X);\;
\sigma(T_X)=\sigma_{\mathrm{r}}(T_X)\cup\sigma_{\mathrm{c}}(T_X);\;
\sigma(T_X)=\sigma_{\mathrm{c}}(T_X).
\end{equation}
As noted above the spaces $L^{2,\infty}$ and $L^{2,1}$
play an important role in this classification. Consequently, it turns out that 
$[-1,1]\subseteq \sigma(T_X)$; see Corollary \ref{c2}.

In Section \ref{S6} we determine the residual spectrum 
$\sigma_{\mathrm{r}}(T_X)$. In order to do so,
a further index $q_X$ associated to $X$ (see  \eqref{qx}) is required,
which allows an investigation of each of the three cases 
that occur naturally, that is,  whether $0\in \sigma_{\mathrm{pt}}(T_X)$, $0\in\sigma_{\mathrm{r}}(T_X)$
or $0\in \sigma_{\mathrm{c}}(T_X)$; see Propositions \ref{p8}, \ref{p9} and  \ref{p10}.

In the final Section \ref{S7} we present some results which provide 
a full description of the spectrum and  the fine spectra  of $T_X$.
For those r.i.\ spaces $X$ having equal Boyd indices we show that $\sigma(T_X)=\mathcal{R}_{p_X}$
and describe in detail the fine spectra; see Theorem \ref{t4}. 
For r.i.\ spaces $X$ satisfying $2>p_X=q_X$ (with $q_X$ attained) and which are 
interpolation spaces between  $L^2$ and $L^{q_X}$, it is also shown
that $\sigma(T_X)=\mathcal{R}_{p_X}$ and we describe in detail the fine spectra;
see Theorem \ref{t5}. An analogous result holds for the case  
when $p_X=q_X>2$; see Theorem \ref{t6}. 
These results are possible thanks to 
Propositions \ref{p11}, \ref{p12} and \ref{p13}, in which a suitable 
superset containing  the spectrum is identified. 
We conclude the paper with some relevant examples of families of r.i.\ spaces to 
which the full identification of the spectrum applies: various Orlicz spaces,
certain Lorentz $\Lambda$-spaces and the small Lebesgue spaces (i.e., the associate spaces
of the grand Lebesgue spaces $L^{p)}$).


The difficulty with providing a full description of the spectrum and 
the fine spectra of $T_X$, for all r.i.\ spaces
$X$ with non-trivial Boyd indices, lies with the precise identification of the continuous spectrum.
Already for the Lorentz spaces $L^{2,r}$, $1\le r<\infty$, 
to establish this identification is rather involved; see Proposition \ref{p6}. In this regard it is relevant to point out  similarities with 
the study of  the spectra of Fourier multiplier operators. 
For a Fourier multiplier operator $T$ on  $L^p(G)$ over an infinite, compact abelian group $G$,
it is known that $\sigma_{\mathrm{r}}(T)=\emptyset$. Thus,  attention was focused since long ago 
on the existence and nature of the set $\sigma_{\mathrm{c}}(T)\setminus\overline{\sigma_{\mathrm{pt}}(T)}$.
Celebrated and highly non-trivial results of Igari and Zafran gave unexpected descriptions 
of this part of the continuous spectrum of $T$; see, for example, 
\cite{zafran} and the references therein. These are similar types of difficulties that
appear with the finite Hilbert transform $T_X$, where the set 
$\overline{\sigma_{\mathrm{pt}}(T)\cup\sigma_{\mathrm{r}}(T)}$ is always known.
Indeed, this set is precisely one of $\emptyset$, $\mathcal{R}_{p_X}$ or $\mathcal{R}_{q_X}$.


\section{Preliminaries}
\label{S2}


The setting of this paper is the measure space  $(-1,1)$ 
equipped with its Borel $\sigma$-algebra $\mathcal{B}$ and  Lebesgue measure $|\cdot|$
(restricted to $\mathcal{B}$). We  denote by 
$L^0(-1,1)=L^0$ the space (of equivalence classes) of all $\mathbb{C}$-valued
measurable functions. The space $L^p(-1,1)$ is denoted simply by $L^p$, for $1\le p\le\infty$.

A \textit{rearrangement invariant} (r.i.) space $X$ on $(-1,1)$ is a
Banach space  $X\subseteq L^0$ satisfying 
that if $g^*\le f^*$ with $g\in L^0$ and $f\in X$,  
then $g\in X$ and $\|g\|_X\le\|f\|_X$. Here $f^*\colon[0,2]\to[0,\infty]$ is 
the decreasing rearrangement of $f$, that is, the right continuous inverse of its distribution function:
$\lambda\mapsto|\{t\in (-1,1):\,|f(t)|>\lambda\}|$.
Every r.i.\ space on $(-1,1)$ satisfies $L^\infty\subseteq X\subseteq L^1$, 
\cite[Corollary II.6.7]{bennett-sharpley}.

The associate space $X'$  of $X$ consists  of all
functions $g\in L^0$ satisfying $\int_{-1}^1|fg|<\infty$, for every
$f\in X$. It is equipped with the norm
$\|g\|_{X'}:=\sup\{|\int_{-1}^1fg|:\|f\|_X\le1\}$. 
The space $X'$ is isometrically isomorphic to a 
closed subspace of the Banach space dual $X^*$ of $X$. 
The associate space $X'$  is again a r.i.\ space.
The second  associate space $X''$ 
of $X$ is defined as $X''=(X')'$. 
Moreover, if $f\in X$ and $g\in X'$, then $fg\in L^1$ and
$\|fg\|_{L^1}\le \|f\|_X \|g\|_{X'}$, that is, H\"older's inequality is available. 
The fundamental function $\varphi_X$ of $X$ is defined by  $\varphi_X(t):=\|\chi_{A}\|_X$ 
for any set $A\in\mathcal{B}$ 
with $|A|=t$, for $t\in[0,2]$.
The norm in $X$ is absolutely continuous if,
for every $f\in X$, we have $\|f\chi_A\|_X\to0$ whenever $|A|\to0$.
The space $X$ satisfies the Fatou property  if, whenever  $\{f_n\}_{n=1}^\infty\subseteq X$ satisfies
$0\le f_n\le f_{n+1}\uparrow f$ a.e.\ with $\sup_n\|f_n\|_X<\infty$,
then $f\in X$ and $\|f_n\|_X\to\|f\|_X$.   
In this paper all r.i.\   spaces, as in \cite{bennett-sharpley},    
satisfy the Fatou property. In this case $X''=X$ and hence,
$f\in X$ if and only if $\int_{-1}^1|fg|<\infty$, for every $g\in X'$.
If $X$ is separable, then $X'=X^*$,
\cite[Corollaries I.3.4 and I.5.6]{bennett-sharpley}. .

The family of r.i.\ spaces includes many classical spaces, such as  the Lorentz $L^{p,q}$ spaces, 
\cite[Definition IV.4.1]{bennett-sharpley}, Orlicz $L^\Phi$ spaces 
\cite[\S4.8]{bennett-sharpley}, Marcinkiewicz $M_\varphi$ spaces, 
\cite[Definition II.5.7]{bennett-sharpley}, Lorentz $\Lambda_\varphi$ spaces,
\cite[Definition II.5.12]{bennett-sharpley},
and the Zygmund $L^p(\text{log L})^\alpha$ spaces, 
\cite[Definition IV.6.11]{bennett-sharpley}.

The dilation operator $E_t$ for $t>0$ is defined, for 
each $f\in X$, by $E_t(f)(s):=f(st)$ for $-1\le st\le1$ and zero in 
other cases. The operator $E_t\colon X\to X$  is bounded 
with $\|E_t\|_{X\to X}\le \max\{t,1\}$. The \textit{lower} and \textit{upper 
Boyd indices} of  $X$ are defined, respectively, by
\begin{equation*}
\underline{\alpha}_X\,:=\,\sup_{0<t<1}\frac{\log \|E_{1/t}\|_{X\to X}}{\log t}
\;\;\mbox{and}\;\;
\overline{\alpha}_X\,:=\,\inf_{1<t<\infty}\frac{\log \|E_{1/t}\|_{X\to X}}{\log t} ,
\end{equation*}
\cite[Definition III.5.12]{bennett-sharpley}. 
They satisfy $0\le\underline{\alpha}_X\le \overline{\alpha}_X\le1$.
Note that $\underline{\alpha}_{L^p}= \overline{\alpha}_{L^p}=1/p$
for all $1\le p<\infty$.
The lower and upper fundamental indices, 
$\underline{\beta}_X$ and $\overline{\beta}_X$, respectively, are  
defined by
\begin{equation*}
\underline{\beta}_X:= \sup_{0<t<1}\frac{\log M_{\varphi_X}(t)}{\log t}
\;\;\mbox{and}\;\;
\overline{\beta}_X:= \inf_{t>1}\frac{\log M_{\varphi_X}(t)}{\log t},
\end{equation*}
\cite[pp. 177-178]{bennett-sharpley}, where
$$
M_{\varphi_X}(t):=\sup_{0<st\le1}\frac{\varphi_X(st)}{\varphi_X(s)}.
$$
The following relation between these indices holds:
$$
0\le\underline{\alpha}_X\le \underline{\beta}_X\le
\overline{\beta}_X\le \overline{\alpha}_X\le 1.
$$
Those r.i.\ spaces $X$ for which $\underline{\alpha}_X= \underline{\beta}_X$
and $\overline{\beta}_X= \overline{\alpha}_X$ are said to be of fundamental type,
\cite{feher}, \cite{maligranda}.

An important role for the finite Hilbert transform is played by the  Marcinkiewicz  space 
$L^{2,\infty}(-1,1)=L^{2,\infty}$, also known as  weak-$L^2$, \cite[Definition IV.4.1]{bennett-sharpley}. 
It consists of those functions $f\in L^0$  satisfying
\begin{equation*}\label{L2oo}
f^*(t)\le \frac{M}{t^{1/2}},\quad 0<t\le2,
\end{equation*}
for some constant $M>0$. Consider the function $1/\sqrt{1-x^2}$ on $(-1,1)$. 
Since its decreasing rearrangement  $(1/\sqrt{1-x^2})^*$
is the function $t\mapsto 2/t^{1/2}$,  it follows that $1/\sqrt{1-x^2}$ belongs to
$L^{2,\infty}$. Actually, for any r.i.\ space $X$  it is the case that 
$1/\sqrt{1-x^2}\in X$ if and only if $L^{2,\infty}\subseteq X$. 
To see this, suppose that $1/\sqrt{1-x^2}\in X$. Fix $f\in L^{2,\infty}$.
Then, for some $M>0$, we have
$f^*(t)\le Mt^{-1/2}=(M/2)(1/\sqrt{1-x^2})^*(t)$ for $0<t<2$.
Since $1/\sqrt{1-x^2}\in X$, it follows from the definition of $X$ being r.i. that $f\in X$. 
Accordingly, $L^{2,\infty}\subseteq X$. 
The converse is immediate because $1/\sqrt{1-x^2}\in L^{2,\infty}$.
Consequently, $L^{2,\infty}$ is the \textit{smallest} r.i.\ space  
which contains  $1/\sqrt{1-x^2}$. Note that  
$\underline{\alpha}_{L^{2,\infty}}= \overline{\alpha}_{L^{2,\infty}}=1/2$.

Regarding
the finite Hilbert transform  acting on  any r.i.\ space $X$ with non-trivial Boyd indices,
it follows from the Parseval formula for $T_X$, 
(cf.\  \cite[Proposition 3.1(b)]{curbera-okada-ricker-1}), that the restriction
of the dual operator $(T_X)^*\colon X^*\to X^*$ of $T_X$ to the  associate
space $X'$ is precisely $-T_{X'}\colon X'\to X'$. Recalling that 
$\underline{\alpha}_{X'}= 1-\overline{\alpha}_X$ and $\overline{\alpha}_{X'}=
1-\underline{\alpha}_X$, \cite[Proposition III.5.13]{bennett-sharpley},
we see that the r.i.\  space $X'$ also has non-trivial Boyd indices and so
$T_{X'}$ is well defined.

Regarding the operator $T$ itself, the definition \eqref{T}  is used by  Widom, \cite{widom}, 
and J\"orgens, \cite[\S13.6]{jorgens}. 
Other  definitions of the finite 
Hilbert transform also appear in the literature, differing
from the one above by a multiplicative constant; e.g., Tricomi, \cite[\S4.3]{tricomi},
uses $iT$ whereas King, \cite[Ch.11]{king}, and Rooney, \cite{rooney}, use $-T$.

Regarding notation, $A\asymp B$ means that there exists absolute constants $c,C>0$
such that $cA\le B\le CA$.

For all of the above (and further) facts on r.i.\  spaces see \cite{bennett-sharpley}, 
\cite{lindenstrauss-tzafriri}, for example.


\section{General properties of the spectrum of $T_X$. Point spectrum.}
\label{S3}


Let $X$ be any r.i.\ space on $(-1,1)$  with non-trivial Boyd indices and $T_X\colon X\to X$.
The  spectrum and the fine spectra of $T_X$ have strong symmetry properties.

\begin{proposition}\label{p1}
Let $X$ be any   r.i.\ space  on $(-1,1)$  such that   $0<\underline{\alpha}_X\le \overline{\alpha}_X<1$. 
\begin{itemize}
\item [(a)] Each of the spectra $\sigma(T_X)$, $\sigma_{\mathrm{pt}}(T_X)$, $\sigma_{\mathrm{c}}(T_X)$ and $\sigma_{\mathrm{r}}(T_X)$  is symmetric with respect to both  the real axis and the imaginary axis in $\C$.
In particular, these spectra are also symmetric with respect to reflection through 0.

\item [(b)] The set $\sigma_{\mathrm{pt}}(T_{X})$ is $\R$-balanced, that is, $\alpha\lambda\in \sigma_{\mathrm{pt}}(T_{X})$ for each 
$\lambda\in \sigma_{\mathrm{pt}}(T_{X})$ and every $\alpha\in\R$ satisfying $|\alpha|\le1$.

\end{itemize}
\end{proposition}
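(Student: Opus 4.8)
The plan is to exploit the explicit structure of the eigenfunctions provided by Jörgens's theorem (Theorem~\ref{t2}), together with the symmetry conferred on $T$ by the geometry of $(-1,1)$, to deduce both symmetry assertions. For part~(a), the key observation is that the region $\mathcal{R}_p$ in \eqref{rp} is manifestly symmetric in both axes: it is defined by a condition on $|\arg((1+\lambda)/(1-\lambda))|$, and the map $\lambda\mapsto(1+\lambda)/(1-\lambda)$ interacts predictably with complex conjugation $\lambda\mapsto\bar\lambda$ and with negation $\lambda\mapsto-\lambda$. Concretely, I would first establish symmetry under conjugation by noting that $T$ has a real kernel, so $T(\bar f)=\overline{T(f)}$; hence if $(\lambda I-T_X)$ fails to be, say, injective or surjective, the same failure occurs for $(\bar\lambda I - T_X)$ after passing to conjugates, and each of the four spectral components is preserved. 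Symmetry under $\lambda\mapsto-\lambda$ should follow from a reflection symmetry of the operator: the change of variables $x\mapsto-x$ on $(-1,1)$, implemented by the isometry $(Rf)(x):=f(-x)$ on any r.i.\ space $X$, anticommutes with $T$ in the sense that $R\,T\,R = -T$ (the principal-value kernel $1/(x-t)$ is odd). Consequently $R(\lambda I - T_X)R = \lambda I + T_X = -(-\lambda)I + T_X$, and since $R$ is an isometric isomorphism of $X$ onto itself, the injectivity, denseness of range, and closedness of range of $(\lambda I - T_X)$ are each equivalent to the corresponding property of $((-\lambda)I - T_X)$. This yields symmetry in the imaginary axis; combining with conjugation symmetry gives symmetry in the real axis and the reflection through $0$.

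For part~(b), the goal is to show that $\sigma_{\mathrm{pt}}(T_X)$ is star-shaped along every real scaling with factor in $[-1,1]$. Here I would work directly with eigenfunctions. Suppose $\lambda\in\sigma_{\mathrm{pt}}(T_X)$; by Proposition~\ref{p2} and the identification of $\sigma_{\mathrm{pt}}(T_X)$ with a subset of $\mathrm{int}(\mathcal{R}_{p_X})$ (equivalently, with membership in $\mathcal{A}$), the eigenfunction is the canonical $\xi_\lambda$ of \eqref{xi} lying in $X$. The parametrization \eqref{z} shows that $\lambda\mapsto z(\lambda)$ is a conformal map sending the eigenvalue region onto a horizontal strip, with $\mathrm{Re}\,z(\lambda)$ controlling the integrability of $\xi_\lambda$ near the endpoints $\pm1$. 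The membership $\xi_\lambda\in X$ is governed by the decreasing rearrangement of $\xi_\lambda$, which depends on $\lambda$ only through $\mathrm{Re}\,z(\lambda)$. Thus the plan is to prove that, for $\lambda\in\sigma_{\mathrm{pt}}(T_X)$ and $|\alpha|\le1$ with $\alpha\in\R$, one has $\alpha\lambda\in\mathcal{A}$ and $\xi_{\alpha\lambda}\in X$, whence $\alpha\lambda\in\sigma_{\mathrm{pt}}(T_X)$.

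The crux is therefore the monotonicity estimate $|\mathrm{Re}\,z(\alpha\lambda)|\le|\mathrm{Re}\,z(\lambda)|$ for real $\alpha\in[-1,1]$, which forces $\xi_{\alpha\lambda}$ to be no worse-behaved at the endpoints than $\xi_\lambda$, so that $\xi_\lambda\in X$ implies $\xi_{\alpha\lambda}\in X$ by the rearrangement-invariance and ideal property of $X$. To see this, I would compute $\mathrm{Re}\,z(\lambda)=\tfrac{1}{2\pi}\arg((1+\lambda)/(1-\lambda))$ from \eqref{z} and examine how the argument of $(1+\alpha\lambda)/(1-\alpha\lambda)$ behaves as $\alpha$ runs over $[-1,1]$. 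The image point $w(\alpha):=(1+\alpha\lambda)/(1-\alpha\lambda)$ traces a circular arc through $w(0)=1$ (where the argument is $0$) as $\alpha$ varies, and I expect the argument to attain its extreme magnitude at $\alpha=\pm1$, giving exactly the desired inequality on $|\mathrm{Re}\,z|$. The main obstacle will be verifying this geometric monotonicity of the argument cleanly — confirming that $\alpha\mapsto\arg w(\alpha)$ has no interior maximum in $|\arg|$ on $[-1,1]$ — and then translating the bound on $\mathrm{Re}\,z$ into the endpoint-integrability comparison of $\xi_{\alpha\lambda}^*$ versus $\xi_\lambda^*$ with enough uniformity that the r.i.\ containment $\xi_\lambda\in X\Rightarrow\xi_{\alpha\lambda}\in X$ genuinely follows. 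Once that comparison is in hand, the conclusion that $\sigma_{\mathrm{pt}}(T_X)$ is $\R$-balanced is immediate.
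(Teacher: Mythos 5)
Your strategy for both parts is essentially the paper's own (conjugation plus the reflection $f(x)\mapsto f(-x)$ for (a); the M\"obius/argument analysis of the canonical eigenfunctions for (b)), but as written there are two concrete defects. First, in (a), the claim that ``$T$ has a real kernel, so $T(\overline f)=\overline{T(f)}$'' is false for the normalization \eqref{T}: the kernel is $\frac{1}{\pi i(x-t)}$, which is purely imaginary, and the correct identity is $\overline{T(f)}=-T(\overline f)$. Hence conjugation intertwines $\lambda I-T$ with $\overline\lambda I+T=-\big((-\overline\lambda)I-T\big)$, so by itself it proves invariance of each spectral part under $\lambda\mapsto-\overline\lambda$ (the imaginary axis), \emph{not} under $\lambda\mapsto\overline\lambda$ as you assert. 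Your conclusion nevertheless survives because you also invoke the correct anticommutation $RTR=-T$, which gives invariance under $\lambda\mapsto-\lambda$; composing the two symmetries recovers $\lambda\mapsto\overline\lambda$, so the corrected pair of identities still yields symmetry in both axes and through $0$ --- this is exactly how the paper argues.

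Second, in (b) you defer the crux: the inequality $|\arg u(\alpha\lambda)|\le|\arg u(\lambda)|$, where $u(\mu)=(1+\mu)/(1-\mu)$ and $\alpha\in\R$, $|\alpha|\le1$, is flagged as an ``obstacle'' and never established, yet it is the whole content of the paper's proof of (b). The paper's route is geometric: $u$ maps the segment $[\lambda,-\lambda]\subseteq\mathcal{A}$ onto a circular arc in $\C\setminus(-\infty,0]$ through $u(0)=1$ with endpoints $u(\lambda)$ and $u(-\lambda)=1/u(\lambda)$, whose arguments are $\pm\arg u(\lambda)$, whence the bound. If you prefer an analytic verification, compute
\begin{equation*}
u(s\lambda)=\frac{(1+s\lambda)(1-s\overline\lambda)}{|1-s\lambda|^2}
=\frac{1-s^2|\lambda|^2+2is\,\Im\lambda}{|1-s\lambda|^2},\qquad s\in[-1,1],
\end{equation*}
so that for $\Im\lambda>0$ and $s\in(0,1]$ one has $\arg u(s\lambda)=\mathrm{arccot}\Big(\frac{1-s^2|\lambda|^2}{2s\,\Im\lambda}\Big)$ with the decreasing branch $\mathrm{arccot}\colon\R\to(0,\pi)$; since $s\mapsto \frac1s-s|\lambda|^2$ is strictly decreasing, $\arg u(s\lambda)$ is nondecreasing on $[0,1]$, and $|\arg u(s\lambda)|$ is even in $s$ because $u(-s\lambda)=1/u(s\lambda)$ (the cases $\Im\lambda<0$ and $\Im\lambda=0$ follow by conjugation and trivially, respectively). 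This fills the gap. Finally, on your last step: pass from this inequality to membership in $X$ via \eqref{xi*}, which is an equivalence only up to constants; that is still enough, since $g^*\le Cf^*$ with $f\in X$ forces $g\in X$ by the ideal property and homogeneity of the norm, so the uniformity you worry about is not an issue.
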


\begin{proof}
(a) Define  the reflection operator $S$ by $S(f)(x):=f(-x)$, for $|x|<1$. It is
routine to verify from the definition of 
the finite Hilbert transform $T$ that
$$
\overline{T(f)}=-T(\overline{f}) \quad\text{and that}\quad
S\big(T(f)\big)=-T\big(S(f)\big),\quad f\in L^1,
$$
where $\overline{f}$ denotes the complex conjugate of $f$.
These identities can be used to establish the following facts.
\begin{itemize}
\item[(i)] $(\lambda I -T)$ is injective $\iff (\lambda I +T)$  is injective 
$ \iff (\overline{\lambda} I -T)$   is injective.
\item[(ii)]   $(\overline{\lambda} I-T)(X)= \overline{S(\lambda I-T)(X)}
:=\{\overline{g}:g\in S(\lambda I-T)(X)\}$.
\item[(iii)]  $S(\lambda I+T)(X)= (\lambda I-T)(X)$.
\end{itemize}

Facts (i)-(iii) can be used to verify that $B=-B$  and $\overline{B}=-B$,
where $B$ denotes any one of the spectra  
$\sigma(T_X)$, $\sigma_{\mathrm{pt}}(T_X)$, $\sigma_{\mathrm{c}}(T_X)$ or 
$\sigma_{\mathrm{r}}(T_X)$.

(b) Let $\lambda\in\sigma_{\mathrm{pt}}(T_{X})$. Then 
the corresponding canonical eigenfunction $\xi_\lambda$ 
given by \eqref{xi} belongs to $X$. 
Fix $\alpha\in\R$ with $|\alpha|\le1$. 
It suffices to  prove that $\xi_{\alpha \lambda}\in X$, from which
it follows that $\alpha\lambda\in\sigma_{\mathrm{pt}}(T_{X})$.

Note that the M\"obius transformation 
\begin{equation}\label{u}
u(\lambda):=\frac{1+\lambda}{1-\lambda}
\end{equation}
maps 
the set $\mathcal{A}$ of all eigenvalues given in \eqref{AA} onto the set $\Omega:=\C\setminus(-\infty,0]$.
In $\Omega$ the argument used for complex numbers can be fixed to lie in $(-\pi,\pi)$. 
This implies, for the function $z(\cdot)$ in \eqref{z}, that its real part 
is defined by
\begin{equation}\label{re-z}
\Re(z(\lambda))=\frac{1}{2\pi}\arg\left(\frac{1+\lambda}{1-\lambda}\right)
=\frac{1}{2\pi}\arg(u(\lambda))
\in\Big(\frac{-1}{2},\frac12\Big).
\end{equation}

From the definition \eqref{xi} of $\xi_\lambda$, the definition  \eqref{z} of $z(\lambda)$, and
\eqref{u}  with \eqref{re-z}, it follows that 
\begin{equation}\label{mod-xi}
\left| \xi_{\lambda}(x)\right|= 
\frac{1}{(1-x^2)^{1/2}}\left(\frac{1-x}{1+x}\right)^{\Re(z(\lambda))}
=
\frac{1}{(1-x^2)^{1/2}}\left(\frac{1-x}{1+x}\right)^{\frac{1}{2\pi}\arg(u(\lambda))}
\end{equation}
and 
$$
\left| \xi_{\alpha\lambda}(x)\right|= 
\frac{1}{(1-x^2)^{1/2}}\left(\frac{1-x}{1+x}\right)^{\Re(z(\alpha\lambda))}
=
\frac{1}{(1-x^2)^{1/2}}\left(\frac{1-x}{1+x}\right)^{\frac{1}{2\pi}\arg(u(\alpha\lambda))}.
$$

Recall   that $\lambda\in\mathcal{A}$ and $u(\mathcal{A})=\Omega=\C\setminus(-\infty,0]$. 
By part (a), also $-\lambda\in\sigma_{\mathrm{pt}}(T_{X})$.
The point $\alpha\lambda$ belongs to the complex segment 
$[\lambda,-\lambda]\subseteq \mathcal{A}$ which joins  $\lambda$ with $-\lambda$ and contains zero.
The image of $[\lambda,-\lambda]$ under the map $u(\cdot)$  
is the arc $\Gamma$ of the circle in $\Omega$
which joins $u(\lambda)$ and $u(-\lambda)$ and contains $u(0)=1$. 
Hence, $u(\alpha\lambda)\in\Gamma$.
Note that
$u(-\lambda)=1/u(\lambda)$. Since the argument function in $\Omega$ takes its values 
in $(-\pi,\pi)$, we have
$\arg(u(-\lambda))=-\arg(u(\lambda))$.  Hence, since $u(\alpha\lambda)\in\Gamma$,
it follows that $|\arg(u(\alpha\lambda))|\le|\arg(u(\lambda))|$.

Let $\widetilde X$ be a r.i.\  space on $(0,2)$ given by the Luxemburg representation for 
$X$, \cite[Theorem II.4.10]{bennett-sharpley}. Then $\xi_\lambda \in X$ precisely when 
its decreasing rearrangement
$\xi_\lambda^* \in \widetilde X$. It follows from \eqref{mod-xi} that
$$
\big|\xi_\lambda(x)\big|
=
\frac{(1-x)^{\frac{1}{2\pi}\arg\left(\frac{1+\lambda}{1-\lambda}\right)-\frac12}}
{(1+x)^{\frac{1}{2\pi}\arg\left(\frac{1+\lambda}{1-\lambda}\right)+\frac12}},\quad |x|<1.
$$
This identity implies  that
\begin{equation}\label{xi*}
\xi_\lambda^* (t) \asymp 
t^{-\left(\frac12+\frac{1}{2\pi}\left|\arg\left(\frac{1+\lambda}{1-\lambda}\right)\right|\right)},
\quad 0<t<2.
\end{equation}

Consider  the decreasing rearrangements  $\xi_\lambda^*$ and $(\xi_{\alpha\lambda})^*$ 
of the functions $\xi_\lambda$ and $\xi_{\alpha\lambda}$, respectively. 
Since   $|\arg(u(\alpha\lambda))|\le|\arg(u(\lambda))|$, it follows
that 
$$
(\xi_{\alpha\lambda})^*(t)\le(\xi_\lambda)^*(t),\quad 0<t<1.
$$ 
But 
$\xi_{\lambda}\in X$ and $X$ is r.i., from which it follows that $\xi_{\alpha\lambda}\in X$.
\end{proof}


We now address the point spectrum 
of $T_X$. Given  any r.i.\ space $X$ on $(-1,1)$ with non-trivial Boyd indices,
define $p_X\in(1,\infty)$ by
\begin{equation}\label{px}
p_X:=\inf\Big\{p\in(1,\infty): |x|^{-1/p}\in X\Big\}=\inf\Big\{p\in(1,\infty): 
L^{p,\infty}\subseteq X\Big\},
\end{equation}
where we have used the fact that $|x|^{-1/p}\in X$ if and only if
$L^{p,\infty}\subseteq X$.
Choose $1<r<\infty$ such that $X\subseteq L^r$ (cf.\ Lemma \ref{l1}). Then
$r=p_{L^r}\le p_X$ shows that necessarily $p_X>1$.
The index  $p_X$ can be  attained or not, depending on the space $X$, 
a fact which will be relevant for the study of the spectrum of $T$.
For $X=L^{p,r}$ with $1<p<\infty$ and $1\le r<\infty$, we have that $p_X=p$ is not attained,
whereas for $X=L^{p,\infty}$ we have that $p_X=p$  is attained . Note that
\begin{equation}\label{px2}
p_X=\inf\Big\{p\in(1,\infty): L^{p}\subseteq X\Big\}.
\end{equation}
The right-side of \eqref{px2}, while  giving the value of $p_X$, may be attained or not
attained in different spaces than the original definition given in \eqref{px}.


For each $\lambda\in\mathcal{A}$, define the number $\gamma_\lambda$  by
\begin{equation}\label{gl}
\frac{1}{\gamma_\lambda} := \frac12+\frac{1}{2\pi}
\left|\arg\bigg(\frac{\lambda+1}{\lambda-1}\bigg)\right|.
\end{equation}
Note, from the determination of the complex argument in \eqref{re-z}, that  $1<\gamma_\lambda\le2$.


\begin{lemma}\label{l2}
Let $X$ be any   r.i.\ space  on $(-1,1)$  such that   $0<\underline{\alpha}_X\le \overline{\alpha}_X<1$.
Let $\lambda\in\mathcal{A}$ with $\xi_\lambda\in\mathcal{E}$ the 
corresponding canonical eigenfunction. 
\begin{itemize}
\item[(a)] Suppose that $p_X$ is  attained. Then 
$$
\xi_\lambda \in X  \iff p_X\le \gamma_\lambda.
$$
\item[(b)] Suppose that $p_X$ is not attained. Then
$$
\xi_\lambda \in X  \iff p_X<\gamma_\lambda.
$$
\end{itemize}
\end{lemma}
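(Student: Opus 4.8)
The plan is to reduce membership of the eigenfunction $\xi_\lambda$ in $X$ to membership of a pure power function, which can then be tested against the index $p_X$ via the equivalence $|x|^{-1/p}\in X\iff L^{p,\infty}\subseteq X$ recorded in \eqref{px}. The starting point is the asymptotic behaviour of the decreasing rearrangement $\xi_\lambda^*$ already computed in \eqref{xi*}, namely
\begin{equation*}
\xi_\lambda^*(t)\asymp t^{-1/\gamma_\lambda},\quad 0<t<2,
\end{equation*}
where I have used the defining relation \eqref{gl} to recognize the exponent $\tfrac12+\tfrac{1}{2\pi}\bigl|\arg(\tfrac{1+\lambda}{1-\lambda})\bigr|$ as $1/\gamma_\lambda$. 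Since $X$ is rearrangement invariant, $\xi_\lambda\in X$ if and only if $\xi_\lambda^*\in\widetilde X$ (the Luxemburg representation), and because $\asymp$ means comparability by absolute constants, $\xi_\lambda\in X$ holds if and only if the power function $t\mapsto t^{-1/\gamma_\lambda}$ lies in $\widetilde X$, equivalently $|x|^{-1/\gamma_\lambda}\in X$.

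Next I would translate this power-function membership into a statement about $p_X$. Writing $\gamma:=\gamma_\lambda$, the function $|x|^{-1/\gamma}$ is exactly the generating function whose presence in $X$ is equivalent to $L^{\gamma,\infty}\subseteq X$. So the whole problem becomes: for which $\gamma$ is $L^{\gamma,\infty}\subseteq X$? This is precisely the condition being infimized in the definition \eqref{px} of $p_X$, so I would now argue monotonicity: the family $\{L^{p,\infty}\}$ increases with $p$ (larger $p$ means a larger weak-$L^p$ space over a finite measure space), so $L^{\gamma,\infty}\subseteq X$ for some $\gamma$ forces it for all smaller exponents, and $p_X$ is the threshold value.

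The two cases then separate according to whether the infimum in \eqref{px} is attained. If $p_X$ is attained, then $L^{p_X,\infty}\subseteq X$ itself holds, and by monotonicity $L^{\gamma,\infty}\subseteq X$ precisely when $\gamma\ge p_X$; this gives part (a), $\xi_\lambda\in X\iff p_X\le\gamma_\lambda$. If $p_X$ is not attained, then $L^{p_X,\infty}\not\subseteq X$, so the threshold itself fails, and $L^{\gamma,\infty}\subseteq X$ holds exactly for $\gamma>p_X$; this gives part (b), $\xi_\lambda\in X\iff p_X<\gamma_\lambda$. The main obstacle I anticipate is the careful handling of the borderline exponent $\gamma=p_X$: I must justify that $L^{\gamma,\infty}\subseteq X$ at the threshold is equivalent to $|x|^{-1/p_X}\in X$, which is exactly the attained/not-attained dichotomy, and confirm that the comparability constants hidden in $\asymp$ in \eqref{xi*} do not spoil the strict-versus-nonstrict distinction — but since $X$ is r.i.\ and the equivalence $|x|^{-1/p}\in X\iff L^{p,\infty}\subseteq X$ is insensitive to bounded multiplicative constants, this causes no difficulty, and the endpoint behaviour is governed entirely by the definition of $p_X$.
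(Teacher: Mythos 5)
Your proposal follows essentially the same route as the paper's own proof: use \eqref{xi*} together with \eqref{gl} to get $\xi_\lambda^*(t)\asymp t^{-1/\gamma_\lambda}$, conclude by rearrangement invariance that $\xi_\lambda\in X$ if and only if $|x|^{-1/\gamma_\lambda}\in X$, and then read the answer off from the definition \eqref{px} of $p_X$, with the attained/non-attained dichotomy deciding whether the borderline exponent is included. The conclusions you state in both cases are the correct ones.

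However, one step of your justification is stated backwards. You claim that the family $\{L^{p,\infty}\}$ \emph{increases} with $p$ and that $L^{\gamma,\infty}\subseteq X$ therefore propagates to all \emph{smaller} exponents. Over the finite measure space $(-1,1)$ the inclusions run the other way: for $p\le q$ one has $|x|^{-1/q}\le |x|^{-1/p}$ for $|x|<1$, hence $L^{q,\infty}\subseteq L^{p,\infty}$, i.e.\ the weak-$L^p$ spaces \emph{shrink} as $p$ grows. Consequently $L^{\gamma,\infty}\subseteq X$ propagates to all \emph{larger} exponents, so that the set $\{p\in(1,\infty): L^{p,\infty}\subseteq X\}$ is upward closed --- an interval $[p_X,\infty)$ or $(p_X,\infty)$ according as $p_X$ is attained or not. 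This upward closedness is exactly what makes the infimum $p_X$ a threshold, and it is what your case analysis actually uses: your final equivalences ($\gamma\ge p_X$ when $p_X$ is attained, $\gamma>p_X$ when it is not) follow from the correct monotonicity but would \emph{not} follow from the one you wrote --- under ``containment propagates downward'' that set would be downward closed and its infimum would degenerate to $1$. So the argument is sound and matches the paper once this inclusion direction is corrected; as written, the parenthetical justification contradicts the conclusions you draw from it.
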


\begin{proof}
Let $\lambda\in\mathcal{A}$ with $\xi_\lambda\in\mathcal{E}$. For  the decreasing rearrangement $\xi_\lambda^*$ of $\xi_\lambda$ we recall, from \eqref{xi*}, that  
$$
\xi_\lambda^* (t) \asymp 
t^{-\left(\frac12+\frac{1}{2\pi}\left|\arg\left(\frac{1+\lambda}{1-\lambda}\right)\right|\right)},
\quad 0<t<2.
$$
In view of the definition \eqref{gl} it follows that 
$$
\xi_\lambda^* (t) \asymp t^{-1/\gamma_\lambda},
\quad 0<t<2.
$$
Hence, $\xi_\lambda\in L^{(1/\gamma_\lambda),\infty}$. 
So, $\xi_\lambda \in X$ precisely when $|x|^{-1/\gamma_\lambda}\in X$.
In view of the definition of $p_X$ given in \eqref{px}, this condition is 
precisely $p_X\le \gamma_\lambda$ or  $p_X< \gamma_\lambda$, 
depending on which case (a) or (b) we are considering. 
\end{proof}

The point spectrum of $T_X$ can now be completely  identified.

\begin{proposition}\label{p2}
Let $X$ be any   r.i.\ space  on $(-1,1)$  such that   $0<\underline{\alpha}_X\le \overline{\alpha}_X<1$.
\begin{itemize}

\item [(a)] Let $p_X>2$ (attained or not) or, let $p_X=2$ with $p_X$  not attained.
Then $\sigma_{\mathrm{pt}}(T_X)=\emptyset$.

\item [(b)]   Let $p_X\le2$ with $p_X$  attained. Then 
$\sigma_{\mathrm{pt}}(T_X)=\mathcal{R}_{p_X}\setminus\{\pm1\}$.

\item [(c)] Let $p_X<2$ with $p_X$  not attained. Then
$\sigma_{\mathrm{pt}}(T_X)=\mathrm{int}(\mathcal{R}_{p_X})$.

\end{itemize}
\end{proposition}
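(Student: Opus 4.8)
The plan is to reduce the determination of $\sigma_{\mathrm{pt}}(T_X)$ to the single membership question posed by Lemma \ref{l2}. The first step is to prove the identity $\sigma_{\mathrm{pt}}(T_X)=\{\lambda\in\mathcal{A}:\xi_\lambda\in X\}$. For the inclusion $\subseteq$, suppose $T_X(f)=\lambda f$ with $f\in X\setminus\{0\}$. Using Lemma \ref{l1} (and that $(-1,1)$ has finite measure, so $L^q\subseteq L^p$ whenever $p\le q$) I may choose $1<p<2$ with $X\subseteq L^p$; then $f\in L^p\setminus\{0\}$ and $T_p(f)=\lambda f$, since $T_X$ and $T_p$ are both restrictions of the operator $T$ of \eqref{T}. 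By Widom's Theorem \ref{t1}(a), $\lambda\in\sigma_{\mathrm{pt}}(T_p)=\mathrm{int}(\mathcal{R}_p)\subseteq\mathcal{A}$, and J\"orgens' Theorem \ref{t2} then forces $f\in\langle\xi_\lambda\rangle$, so $\xi_\lambda\in X$. Conversely, if $\lambda\in\mathcal{A}$ and $\xi_\lambda\in X$, then the $L^1$-identity $T(\xi_\lambda)=\lambda\xi_\lambda$ holds within $X$, giving $\lambda\in\sigma_{\mathrm{pt}}(T_X)$.

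The second step applies Lemma \ref{l2}: for $\lambda\in\mathcal{A}$, membership $\xi_\lambda\in X$ is equivalent to $p_X\le\gamma_\lambda$ when $p_X$ is attained, and to $p_X<\gamma_\lambda$ when $p_X$ is not attained, where $1<\gamma_\lambda\le2$ by \eqref{gl} and \eqref{re-z}. Since $\gamma_\lambda\le2$ for every $\lambda\in\mathcal{A}$, part (a) follows at once: if $p_X>2$ (attained or not), or if $p_X=2$ and is not attained, then neither $p_X\le\gamma_\lambda$ nor $p_X<\gamma_\lambda$ can hold for any $\lambda\in\mathcal{A}$, whence $\sigma_{\mathrm{pt}}(T_X)=\emptyset$.

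For parts (b) and (c) I would set up the dictionary between the $\gamma_\lambda$-inequalities and the region $\mathcal{R}_{p_X}$. From \eqref{gl} one has $\tfrac{1}{2\pi}|\arg(u(\lambda))|=\tfrac{1}{\gamma_\lambda}-\tfrac12$, so for $p\le2$ and $\lambda\in\mathcal{A}$ the defining inequality of $\mathcal{R}_p$ in \eqref{rp}, namely $\tfrac{1}{2\pi}|\arg(u(\lambda))|\le\tfrac1p-\tfrac12$, is equivalent to $p\le\gamma_\lambda$, while the corresponding strict inequality, which cuts out $\mathrm{int}(\mathcal{R}_p)$, is equivalent to $p<\gamma_\lambda$. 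Since $\mathcal{R}_p\cap\R=[-1,1]$, both $\mathcal{R}_{p_X}\setminus\{\pm1\}$ and $\mathrm{int}(\mathcal{R}_{p_X})$ lie inside $\mathcal{A}$. Thus in case (b), where $p_X\le2$ is attained, $\sigma_{\mathrm{pt}}(T_X)=\{\lambda\in\mathcal{A}:p_X\le\gamma_\lambda\}=\mathcal{R}_{p_X}\setminus\{\pm1\}$, and in case (c), where $p_X<2$ is not attained, $\sigma_{\mathrm{pt}}(T_X)=\{\lambda\in\mathcal{A}:p_X<\gamma_\lambda\}=\mathrm{int}(\mathcal{R}_{p_X})$. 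The genuine content here is carried by J\"orgens' classification of eigenfunctions and by Lemma \ref{l2}, so the remaining work is essentially bookkeeping; the one point demanding care, and which I expect to be the main obstacle, is that because $\gamma_\lambda\le2$ the comparison furnished by Lemma \ref{l2} must be matched against the representative $p_X\le2$ (recall $\mathcal{R}_{p_X}=\mathcal{R}_{p_X'}$), keeping the strict versus non-strict boundary cases aligned with whether $p_X$ is attained and excising the points $\pm1$ where $u(\lambda)$, and hence $\arg(u(\lambda))$, is undefined.
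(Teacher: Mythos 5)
Your proposal is correct and follows essentially the same route as the paper: it reduces $\sigma_{\mathrm{pt}}(T_X)$ to the membership question $\xi_\lambda\in X$ via Widom's and J\"orgens' theorems, then applies Lemma \ref{l2} and translates the inequalities $p_X\le\gamma_\lambda$ (resp.\ $p_X<\gamma_\lambda$) into the description \eqref{rp} of $\mathcal{R}_{p_X}$. The only difference is that you spell out in detail the reduction step (embedding $X\subseteq L^p$ with $1<p<2$ and invoking Theorems \ref{t1} and \ref{t2}) which the paper states tersely by citation; this is a welcome clarification, not a change of method.
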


\begin{proof} 
From Theorems \ref{t1} and \ref{t2} and Lemma \ref{l2} we have 
$T(f)=\lambda f$, for $f\in X\setminus\{0\}$   and $\lambda\in\C$
precisely when  $\lambda\in\mathcal{A}$ and   
$f\in \langle \xi_\lambda\rangle$.
So, the point spectrum $\sigma_{\mathrm{pt}}(T_X)$ 
consists of  those $\lambda\in\C$ for which $\xi_\lambda\in X$, that is,
$\sigma_{\mathrm{pt}}(T_X)=\{\lambda\in\C: \xi_\lambda\in \mathcal{E}\cap X\}$.

(a) Since $\gamma_\lambda\le2$ for all $\lambda\in\mathcal{A}$, 
if $p_X>2$, then there is no 
$\lambda\in\mathcal{A}$ for which $p_X\le\gamma_\lambda$. 
In the case $p_X=2$ with
$p_X$ not attained, the condition in Lemma \ref{l2}(b) for $\lambda$ 
to satisfy $\xi_\lambda\in X$
is that $p_X<\gamma_\lambda$. Since $2=p_X$ and $\gamma_\lambda\le2$, 
there is
no $\lambda$ which is an eigenvalue for $T_X$.

(b) Let $\lambda\in\C$. Since  $1/p_X\ge 1/2$,  Lemma \ref{l2} implies that $\xi_\lambda \in X$ 
(i.e., $\lambda\in\sigma_{\mathrm{pt}}(T_X)$) if and only if 
$$
\frac12+\frac{1}{2\pi} \left|\arg\bigg(\frac{\lambda+1}{\lambda-1}\bigg)\right|\le\frac{1}{p_X}
\iff 
\frac{1}{2\pi} \left|\arg\bigg(\frac{\lambda+1}{\lambda-1}\bigg)\right|\le
\left|\frac12-\frac{1}{p_X}\right|.
$$
According to \eqref{rp} this occurs  precisely when $\lambda\in \mathcal{R}_{p_X}\setminus\{\pm1\}$.

(c) Let $\lambda\in\C$. Arguing  as in (b), we arrive at $\xi_\lambda \in X$ 
(i.e., $\lambda\in\sigma_{\mathrm{pt}}(T_X)$) if and only if 
$$ 
\frac{1}{2\pi} \left|\arg\bigg(\frac{\lambda+1}{\lambda-1}\bigg)\right|<\left|\frac12-\frac{1}{p_X}\right|.
$$
In view of \eqref{rp}, this condition corresponds to $\lambda\in \mathrm{int}(\mathcal{R}_{p_X})$.
\end{proof}

\begin{remark}\label{r1}
(a) The identity
$$
\sigma_{\mathrm{pt}}(T_X)=\Big\{\lambda\in\C: \xi_\lambda\in\mathcal{E}\cap X\Big\}
$$
implies that the 
point spectrum is \textit{monotone with respect  to containment of spaces}. That is, if
$X\subseteq Y$ are r.i.\ spaces over $(-1,1)$, both with non-trivial Boyd indices, then  
$\sigma_{\mathrm{pt}}(T_X)\subseteq \sigma_{\mathrm{pt}}(T_Y)$.

(b) Let  $\lambda\in\sigma_{\mathrm{pt}}(T_X)$. Then
 $\mathrm{Ker}(\lambda I-T_X)=\langle\xi_\lambda\rangle$ is 1-dimensional.
For $X=L^p$ with $1<p<2$ this was already observed in \cite[Theorem 13.9]{jorgens} 
and  \cite[\S4.3]{tricomi}.

(c) It was noted after \eqref{px} that $p_X=p$ is attained for $X=L^{p,\infty}$,
$1<p<\infty$. Hence, Proposition \ref{p2}(b) implies that 
$\sigma_{\mathrm{pt}}(L^{p,\infty})=\mathcal{R}_{p}\setminus\{\pm1\}$.
In particular, $\sigma_{\mathrm{pt}}(L^{2,\infty})=(-1,1)$.

\end{remark}


\section{The spectrum of $T$  in the  Lorentz $L^{p,r}$ spaces}
\label{S4}


The results on the spectra of $T$ when acting on the $L^p$-spaces  and those concerning  the 
corresponding eigenfunctions 
of $T$, together with the general properties of the spectra of $T_X$  established 
in the previous section, allow us to study the spectra of $T$ when acting in the    
family of Lorentz $L^{p,r}$ spaces, for $1<p<\infty$ and $1\le r< \infty$.   
Denote the finite Hilbert transform $T\colon L^{p,r}\to L^{p,r}$ simply by $T_{p,r}$.


The following four  propositions consider  different situations 
that arise when identifying  the spectrum and
fine spectra  of $T_{p,r}$ for various choices of the  values of $1<p<\infty$  and $1\le r< \infty$.

\begin{proposition}\label{p3}
Let  $1<p<\infty$.    Then
$\sigma(T_{p,r})\subseteq \mathcal{R}_p$ for every $1\le r< \infty$.
\end{proposition}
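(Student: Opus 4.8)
The plan is to prove the contrapositive: if $\lambda\notin\mathcal{R}_p$, then $\lambda I-T_{p,r}$ is invertible on $L^{p,r}$, so $\lambda\notin\sigma(T_{p,r})$. The idea is to produce two exponents $p_0,p_1$ straddling $p$ for which Widom's theorem already guarantees $\lambda$ lies in the resolvent set of $T_{p_0}$ and of $T_{p_1}$, and then to transfer invertibility to $L^{p,r}$ by viewing it as a real interpolation space of the couple $(L^{p_0},L^{p_1})$.

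First I would record the shape of $\mathcal{R}_q$. Since $\{\pm1\}\subseteq\mathcal{R}_q$ for every $q$, the assumption $\lambda\notin\mathcal{R}_p$ forces $\lambda\neq\pm1$, so the quantity $\theta:=\tfrac{1}{2\pi}\bigl|\arg\tfrac{1+\lambda}{1-\lambda}\bigr|$ is well defined via the determination in \eqref{re-z}. By \eqref{rp}, for $\lambda\neq\pm1$ the membership $\lambda\in\mathcal{R}_q$ is equivalent to $\bigl|\tfrac12-\tfrac1q\bigr|\ge\theta$, so $\lambda\notin\mathcal{R}_p$ reads $\bigl|\tfrac12-\tfrac1p\bigr|<\theta$. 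Because $q\mapsto\bigl|\tfrac12-\tfrac1q\bigr|$ is continuous on $(1,\infty)$, this strict inequality persists on a whole neighbourhood of $p$; I would therefore fix $p_0,p_1$ with $1<p_0<p<p_1<\infty$ small enough that $\bigl|\tfrac12-\tfrac1{p_j}\bigr|<\theta$ for $j=0,1$, i.e. $\lambda\notin\mathcal{R}_{p_0}=\sigma(T_{p_0})$ and $\lambda\notin\mathcal{R}_{p_1}=\sigma(T_{p_1})$ by Theorem \ref{t1}. Consequently the resolvents $R_j:=(\lambda I-T_{p_j})^{-1}$ are bounded on $L^{p_j}$, and since $T$ is a single operator (the principal value \eqref{T}, consistent across the $L^q$-scale), $R_0$ and $R_1$ agree on $L^{p_0}\cap L^{p_1}$.

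Next I would invoke the classical identification $L^{p,r}=(L^{p_0},L^{p_1})_{\vartheta,r}$ for the appropriate $\vartheta\in(0,1)$ with $\tfrac1p=\tfrac{1-\vartheta}{p_0}+\tfrac{\vartheta}{p_1}$. As the real interpolation functor is exact, the consistent pair $(R_0,R_1)$ determines a single bounded operator $R_\lambda\colon L^{p,r}\to L^{p,r}$ extending both. It remains to check that $R_\lambda$ is a genuine two-sided inverse of $\lambda I-T_{p,r}$. Here one uses that $L^{p,r}$ has absolutely continuous norm (since $r<\infty$), so the simple functions, which lie in $L^{p_0}\cap L^{p_1}$, are dense. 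On that dense set the endpoint identities $R_j(\lambda I-T)f=f=(\lambda I-T)R_jf$ hold, and both $R_\lambda$ and $\lambda I-T_{p,r}$ are bounded on $L^{p,r}$ (the latter because $\underline{\alpha}_{L^{p,r}}=\overline{\alpha}_{L^{p,r}}=1/p\in(0,1)$). Hence $R_\lambda(\lambda I-T_{p,r})$ and $(\lambda I-T_{p,r})R_\lambda$ are bounded operators agreeing with the identity on a dense subspace, so they equal $I$ on all of $L^{p,r}$. Therefore $\lambda\in\varrho(T_{p,r})$, completing the inclusion $\sigma(T_{p,r})\subseteq\mathcal{R}_p$.

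The only genuinely delicate point is the passage from ``$R_\lambda$ is bounded'' to ``$R_\lambda$ inverts $\lambda I-T_{p,r}$'': interpolation alone yields a bounded candidate, and one must still verify that it composes with $\lambda I-T_{p,r}$ to the identity. I expect this to hinge on the consistency of $T$ along the scale together with the density of $L^{p_0}\cap L^{p_1}$ in $L^{p,r}$; everything else is a routine combination of Widom's theorem, the continuity of $q\mapsto\bigl|\tfrac12-\tfrac1q\bigr|$, and the standard interpolation identity for Lorentz spaces.
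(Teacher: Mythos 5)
Your proposal is correct and takes essentially the same route as the paper's proof: straddle $p$ by two exponents at which Widom's theorem gives bounded resolvents, transfer boundedness to $L^{p,r}$ by interpolation, and then upgrade the bounded candidate to a genuine two-sided inverse of $\lambda I-T_{p,r}$ via density of a common subspace (your consistency and density checks are exactly the paper's closing argument). The only difference is the interpolation tool: the paper chooses conjugate exponents $1<\alpha<p<\alpha'<\infty$ with $\lambda\notin\mathcal{R}_\alpha=\mathcal{R}_{\alpha'}$ and applies Boyd's interpolation theorem (using $\underline{\alpha}_{L^{p,r}}=\overline{\alpha}_{L^{p,r}}=1/p$), whereas you invoke the real-method identity $(L^{p_0},L^{p_1})_{\vartheta,r}=L^{p,r}$; both are standard and yield the same conclusion.
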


\begin{proof} 
Let  $1\le r< \infty$.
Fix $\lambda\notin\mathcal{R}_p$. Select $\alpha\in(1,\infty)$ such that
$\alpha$ and  its conjugate index $\alpha'$ satisfy   $1<\alpha<p<\alpha'<\infty$  and 
$\lambda\notin\mathcal{R}_\alpha=\mathcal{R}_{\alpha'}$. 
Then $L^{\alpha'}\subseteq L^{p,r}\subseteq L^\alpha$ and Theorem \ref{t1}
implies that  the resolvent operators 
$R(\lambda,T_\alpha):=(\lambda I -T)^{-1}\colon L^\alpha\to L^\alpha$
and $R(\lambda,T_{\alpha'}):=(\lambda I -T)^{-1}\colon L^{\alpha'}\to L^{\alpha'}$
are continuous linear bijections.
Boyd's interpolation theorem, \cite[Theorem 2.b.11]{lindenstrauss-tzafriri}, allows us to conclude
that $(\lambda I -T)^{-1}\colon L^{p,r}\to L^{p,r}$ is continuous; 
recall that
 $\underline{\alpha}_{L^{p,r}}=\overline{\alpha}_{L^{p,r}}=1/p$,
 \cite[Theorem IV.4.3]{bennett-sharpley}.
Let $S$ denote this continuous linear operator $(\lambda I -T)^{-1}\colon L^{p,r}\to L^{p,r}$. 
Then $S\circ (\lambda I -T_{p,r})=I$ on $L^{\alpha'}$ and $(\lambda I -T_{p,r})\circ S=I$ on $L^{\alpha'}$.
Since $L^{\alpha'}$ is dense in $L^{p,r}$, it follows that $S=(\lambda I -T_{p,r})^{-1}$ on 
$L^{p,r}$. Hence, $\lambda\notin\sigma(T_{p,r})$. Consequently, $\sigma(T_{p,r})\subseteq\mathcal{R}_p$.
\end{proof}


For $X=L^{p,r}$ with $1<p<\infty$ it follows from the definition of $p_X$ in \eqref{px} that $p_X=p$, 
which is  not attained for every $1\le r<\infty$. Hence, the following result 
is an immediate consequence of Proposition \ref{p2}.

\begin{proposition}\label{p4}
Let  $1<p<\infty$   and $1\le r< \infty$.  
\begin{itemize}
\item[(a)] For $1<p<2$ and $1\le r<\infty$, we have that
	$\sigma_{\mathrm{pt}}(T_{p,r})=\mathrm{int}(\mathcal{R}_p)$.
\item[(b)] For $p=2$ and $1\le r<\infty$, we have that $\sigma_{\mathrm{pt}}(T_{2,r})=\emptyset$.
\item[(c)] For $2<p<\infty$ and $1\le r< \infty$, we have that $\sigma_{\mathrm{pt}}(T_{p,r})=\emptyset$.
\end{itemize}
\end{proposition}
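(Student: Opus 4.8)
The plan is to obtain all three assertions as immediate specializations of Proposition~\ref{p2}, so that the entire task reduces to checking that $L^{p,r}$ meets its hypotheses and then reading off which of the three cases applies. First I would confirm that Proposition~\ref{p2} is applicable: for $1<p<\infty$ and $1\le r<\infty$ the space $X=L^{p,r}$ is a r.i.\ space whose Boyd indices satisfy $\underline{\alpha}_X=\overline{\alpha}_X=1/p\in(0,1)$, \cite[Theorem~IV.4.3]{bennett-sharpley}, so they are non-trivial and the operator $T_X=T_{p,r}$ is well defined.

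Next I would record the one structural fact driving everything, already noted after \eqref{px}: for $X=L^{p,r}$ with $r<\infty$ the index $p_X$ equals $p$ and this infimum is \emph{not} attained, because $L^{p,\infty}\not\subseteq L^{p,r}$ (equivalently $|x|^{-1/p}\notin L^{p,r}$). With $p_X=p$ not attained in hand, the three parts follow by matching the ranges of $p$ to the cases of Proposition~\ref{p2}. For $1<p<2$ one is in case (c) ($p_X<2$, not attained), giving $\sigma_{\mathrm{pt}}(T_{p,r})=\mathrm{int}(\mathcal{R}_{p_X})=\mathrm{int}(\mathcal{R}_p)$. For $p=2$ one is in the second alternative of case (a) ($p_X=2$, not attained), giving $\sigma_{\mathrm{pt}}(T_{2,r})=\emptyset$. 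For $2<p<\infty$ one is in the first alternative of case (a) ($p_X>2$), again giving $\sigma_{\mathrm{pt}}(T_{p,r})=\emptyset$.

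I do not expect a genuine obstacle here: all the real content has been front-loaded into Proposition~\ref{p2} and into the identification $p_X=p$ (not attained). The only point deserving a moment's care is that non-attainment must persist for \emph{every} finite $r$; it is precisely this that sends $p=2$ into the empty case of Proposition~\ref{p2}(a) rather than into the set $\mathcal{R}_2\setminus\{\pm1\}$ of Proposition~\ref{p2}(b), and it holds because $L^{p,\infty}$ is strictly larger than $L^{p,r}$ whenever $r<\infty$.
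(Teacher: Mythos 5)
Your proposal is correct and is essentially identical to the paper's own proof: the paper likewise observes that $p_X=p$ is not attained for $X=L^{p,r}$ with $1\le r<\infty$ and then reads off all three parts as immediate consequences of Proposition~\ref{p2}, matching the same cases ((c) for $1<p<2$, (a) for $p\ge 2$) that you identify.
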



\begin{proposition}\label{p5}
Let  $1<p<\infty$  with $p\not=2$.  Then
$\mathcal{R}_p \subseteq \sigma(T_{p,r})$ 
for every  $1\le r< \infty$.
\end{proposition}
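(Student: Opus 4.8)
The plan is to combine Proposition \ref{p3}, which already gives the inclusion $\sigma(T_{p,r})\subseteq\mathcal{R}_p$, with a proof of the reverse containment $\mathcal{R}_p\subseteq\sigma(T_{p,r})$. I would split the argument according to whether $1<p<2$ or $2<p<\infty$, and handle the first case directly, the second by duality.

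First, suppose $1<p<2$. By Proposition \ref{p4}(a) the point spectrum $\sigma_{\mathrm{pt}}(T_{p,r})$ equals $\mathrm{int}(\mathcal{R}_p)$ for every $1\le r<\infty$. Since always $\sigma_{\mathrm{pt}}(T_{p,r})\subseteq\sigma(T_{p,r})$ and the spectrum of a bounded operator is closed, taking closures yields $\mathcal{R}_p=\overline{\mathrm{int}(\mathcal{R}_p)}\subseteq\sigma(T_{p,r})$. Here I use that, for $p\neq2$, the region $\mathcal{R}_p$ has nonempty interior and coincides with the closure of that interior, the two vertices $\pm1$ being limits of interior points. This settles the case $p<2$.

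Next, for $2<p<\infty$ I would pass to the dual picture. The space $L^{p,r}$ is separable for $1\le r<\infty$, so its Banach dual is $(L^{p,r})^*=(L^{p,r})'=L^{p',r'}$, and, as recorded in the preliminaries, the dual operator is $(T_{p,r})^*=-T_{p',r'}$. Using the standard facts that a bounded operator and its Banach-space adjoint have the same spectrum and that $\sigma(-B)=-\sigma(B)$, I obtain
$$
\sigma(T_{p,r})=\sigma\big((T_{p,r})^*\big)=\sigma(-T_{p',r'})=-\sigma(T_{p',r'}).
$$
Since $2<p<\infty$ forces $1<p'<2$, the case already treated (now applied to $L^{p',r'}$, valid whenever $1<r'<\infty$, i.e.\ whenever $r>1$) gives $\mathcal{R}_{p'}\subseteq\sigma(T_{p',r'})$. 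Because $\mathcal{R}_p=\mathcal{R}_{p'}$ is symmetric about the origin, one has $-\mathcal{R}_{p'}=\mathcal{R}_{p'}$, whence $\mathcal{R}_p=-\mathcal{R}_{p'}\subseteq-\sigma(T_{p',r'})=\sigma(T_{p,r})$.

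The one genuinely delicate point, which I expect to be the main obstacle, is the endpoint case $r=1$ (with $2<p<\infty$): here $r'=\infty$, so the dual is the non-separable Marcinkiewicz space $L^{p',\infty}$, and the first part of the argument does not apply verbatim. I would instead invoke Remark \ref{r1}(c), according to which the point spectrum of $T$ on $L^{p',\infty}$ equals $\mathcal{R}_{p'}\setminus\{\pm1\}$ for $1<p'<2$; its closure is again $\mathcal{R}_{p'}$, so $\mathcal{R}_{p'}\subseteq\sigma(T_{L^{p',\infty}})$. Since $L^{p,1}$ is still separable, the duality identity $\sigma(T_{p,1})=-\sigma(T_{L^{p',\infty}})$ remains valid and closes the gap exactly as in the generic case.
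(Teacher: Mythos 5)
Your proposal is correct and follows essentially the same route as the paper: for $1<p<2$ take the closure of $\sigma_{\mathrm{pt}}(T_{p,r})=\mathrm{int}(\mathcal{R}_p)$, and for $2<p<\infty$ pass to the adjoint $(T_{p,r})^*=-T_{p',r'}$ (with the non-separable dual $L^{p',\infty}$ appearing when $r=1$), using the symmetry $\mathcal{R}_p=-\mathcal{R}_p=\mathcal{R}_{p'}$. The only cosmetic difference is in the endpoint case $r=1$: you invoke Remark \ref{r1}(c) to get $\sigma_{\mathrm{pt}}(T_{p',\infty})=\mathcal{R}_{p'}\setminus\{\pm1\}$ directly, whereas the paper reaches the same density conclusion via the monotonicity of the point spectrum (Remark \ref{r1}(a)) applied to the inclusion $L^{p',s'}\subseteq L^{p',\infty}$; both rest on the same Section \ref{S3} results.
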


\begin{proof} 
Consider  first $1<p<2$.   Proposition \ref{p4} implies that
$\sigma_{\mathrm{pt}}(T_{p,r})=\mathrm{int}(\mathcal{R}_p)$, 
from which it follows that
$$
\mathcal{R}_p=\overline{\mathrm{int}(\mathcal{R}_p)}=\overline{\sigma_{\mathrm{pt}}(T_{p,r})}
\subseteq \sigma(T_{p,r}).
$$

For the case $2<p<\infty$ we have that $1<p'<2$. Fix $1<r<\infty$. Arguing  as
in the previous paragraph (with $p'$ and $r'$ in place of $p$ and $r$) gives
$$
\mathcal{R}_{p'}=\overline{\mathrm{int}(\mathcal{R}_{p'})}=\overline{\sigma_{\mathrm{pt}}(T_{p',r'})}
\subseteq \sigma(T_{p',r'}).
$$ 
Recall that  $\mathcal{R}_{p'}=\mathcal{R}_{p}$ and,
by the separability of $L^{p,r}$, that 
$(L^{p,r})^*=(L^{p,r})'=L^{p',r'}$ (see \cite[Theorem IV.4.7]{bennett-sharpley})
and hence,
$(T_{p,r})^*=-T_{p',r'}$.
Due to the symmetry of the spectrum 
(cf. Proposition \ref{p1}), it follows that $\sigma(T_{p',r'})=\sigma(-T_{p',r'})=\sigma((T_{p,r})^*)
=\sigma(T_{p,r})$, from which  we can deduce that 
$\mathcal{R}_p\subseteq \sigma(T_{p,r})$.

For $2<p<\infty$ and $r=1$
a modification of the previous  argument is needed.
The separability of $L^{p,1}$, again together with 
\cite[Theorem IV.4.7]{bennett-sharpley}, yields 
$(L^{p,1})^*=L^{p',\infty}$ and $(T_{p,1})^*=-T_{p',\infty}$.
Fix any $1<s<\infty$. Since $L^{p',s'}\subseteq L^{p',\infty}$, 
the monotonicity of the point spectrum observed in Remark \ref{r1}(a)
implies that $\sigma_{\mathrm{pt}}(T_{p',s'})\subseteq \sigma_{\mathrm{pt}}(T_{p',\infty})$.
By the argument of the previous paragraph (with $s$ in place of $r$)
we have that
$$
\mathcal{R}_{p'}=\overline{\mathrm{int}(\mathcal{R}_{p'})}=\overline{\sigma_{\mathrm{pt}}(T_{p',s'})}
\subseteq \overline{\sigma_{\mathrm{pt}}(T_{p',\infty})}\subseteq \sigma(T_{p',\infty}).
$$
Since $\mathcal{R}_{p'}=\mathcal{R}_{p}$
and, by Proposition \ref{p1}, also  $\sigma(T_{p',\infty})=\sigma(-T_{p',\infty})=\sigma((T_{p,1})^*)
=\sigma(T_{p,1})$, it follows that  
$\mathcal{R}_p\subseteq \sigma(T_{p,1})$. 
\end{proof}


\begin{proposition}\label{p6}
Let  $p=2$.  Then
$\mathcal{R}_2=[-1,1] \subseteq \sigma(T_{2,r})$
for every  $1\le r<\infty$. 
\end{proposition}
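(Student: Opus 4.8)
The plan is to prove $\lambda\in\sigma(T_{2,r})$ for every $\lambda\in[-1,1]$ by reducing, via duality and real interpolation, to Widom's identification $\sigma(T_2)=\mathcal{R}_2=[-1,1]$ on $L^2=L^{2,2}$ (Theorem \ref{t1}(b)). For $r=2$ there is nothing to prove, since $T_{2,2}=T_2$. The organising principle for $r\neq 2$ is that $L^{2,r}$ and its associate space sit on opposite sides of $L^2$ in the Lorentz scale, and that $L^2$ is recovered from them by real interpolation with second index $2$. First I would record the duality symmetry: since $L^{2,r}$ is separable, $(L^{2,r})^*=(L^{2,r})'=L^{2,r'}$ and $(T_{2,r})^*=-T_{2,r'}$, exactly as in the proof of Proposition \ref{p5}. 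Combining $\sigma((T_{2,r})^*)=\sigma(T_{2,r})$ with Proposition \ref{p1}(a) (which gives $\sigma(-T_{2,r'})=-\sigma(T_{2,r'})=\sigma(T_{2,r'})$) yields $\sigma(T_{2,r})=\sigma(T_{2,r'})$ for all $1\le r<\infty$.

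Now fix $1<r<\infty$ with $r\neq 2$ and suppose, for contradiction, that some $\lambda\in[-1,1]$ lies in the resolvent set of $T_{2,r}$; by the previous symmetry it then also lies in that of $T_{2,r'}$, so $R_r:=(\lambda I-T)^{-1}$ is bounded on $L^{2,r}$ and $R_{r'}:=(\lambda I-T)^{-1}$ is bounded on $L^{2,r'}$. The key verification is that these two resolvents are \emph{consistent}: for $f\in L^{2,r}\cap L^{2,r'}=L^{2,\min(r,r')}$, the difference $h:=R_rf-R_{r'}f$ satisfies $(\lambda I-T)h=0$ and lies in $L^{2,r}+L^{2,r'}=L^{2,\max(r,r')}$. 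By \eqref{xi*} the only candidate eigenfunction $\xi_\lambda$ obeys $\xi_\lambda^*(t)\asymp t^{-1/2}$ (for real $\lambda$ the argument term vanishes), whence $\xi_\lambda\notin L^{2,s}$ for every finite $s$; since $\max(r,r')<\infty$, the kernel of $\lambda I-T$ on $L^{2,\max(r,r')}$ is trivial and $h=0$. Thus $(R_r,R_{r'})$ is a consistent pair on the couple $(L^{2,r},L^{2,r'})$, and the real interpolation identity $(L^{2,r},L^{2,r'})_{1/2,2}=L^{2,2}=L^2$ (valid because $r\neq r'$) produces a single operator $S$ bounded on $L^2$. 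Since $(\lambda I-T_2)Sf=f$ on the dense subspace $L^\infty$ and $\lambda I-T_2$ is injective on $L^2$ (as $\lambda$ is not an eigenvalue, by Widom and since $\pm1\notin\mathcal{A}$), $S$ is a bounded two‑sided inverse, giving $\lambda\in\rho(T_2)$ — contradicting $\sigma(T_2)=[-1,1]$. Hence $[-1,1]\subseteq\sigma(T_{2,r})$ for all such $r$.

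The case $r=1$ must be treated separately, because its associate index is $r'=\infty$ and the consistency step collapses: $\xi_\lambda\in L^{2,\infty}$, so $\lambda I-T$ is no longer injective on $L^{2,\infty}$ and $R_{\infty}$ does not even exist. Here I would argue directly. The symmetry of the first paragraph gives $\sigma(T_{2,1})=\sigma(T_{2,\infty})$, and Remark \ref{r1}(c) identifies $\sigma_{\mathrm{pt}}(T_{2,\infty})=(-1,1)$. As the spectrum is closed, $[-1,1]=\overline{(-1,1)}\subseteq\sigma(T_{2,\infty})=\sigma(T_{2,1})$, which settles $r=1$.

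The crux — and the reason this result is delicate — is the borderline integrability at $p=2$: the eigenfunction $\xi_\lambda$ for $\lambda\in(-1,1)$ has $\xi_\lambda^*(t)\asymp t^{-1/2}$, so it lies in $L^{2,\infty}$ but in no $L^{2,s}$ with $s<\infty$. This single fact explains why $\sigma_{\mathrm{pt}}(T_{2,r})=\emptyset$ (so the closure‑of‑eigenvalues shortcut of Proposition \ref{p5} is unavailable), why the resolvent‑consistency succeeds for finite $r'$ but fails at $r'=\infty$, and hence why the two cases split. I expect the consistency/injectivity check and the Lorentz interpolation identity $(L^{2,r},L^{2,r'})_{1/2,2}=L^2$ (which I would justify by reiteration from a fixed pair $L^a\subseteq L^2\subseteq L^b$, the common exponent $\theta_*$ being forced by $p=2$) to be the points needing the most care. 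A more hands‑on alternative would build an approximate‑eigenvalue sequence from the truncations $\xi_\lambda\,\chi_{\{1-|x|>\varepsilon\}}$, whose $L^{2,r}$‑norms grow like $(\log(1/\varepsilon))^{1/r}$ while $(\lambda I-T)$ applied to them equals the commutator $[M_{\chi},T]\xi_\lambda$ supported near $x=\pm1$; proving the latter stays bounded requires a uniform singular‑integral estimate at the endpoints, and that is precisely the involved computation the interpolation route is designed to avoid.
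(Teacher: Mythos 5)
Your proof is correct, but it takes a genuinely different route from the paper's. The paper argues \emph{non-surjectivity} directly and uniformly in $1\le r<\infty$: for $\lambda\in(-1,1)$ it takes the unimodular function $g_\lambda=w\,\xi_\lambda$ (where $w(x)=(1-x^2)^{1/2}$), which lies in $L^{2,r}$, and uses J\"orgens' explicit pseudo-inverse $\widehat R(\lambda,T_p)$ on $L^p$, $1<p<2$, together with the identities $T(1/w)=0$ and $T(w)(x)=ix$, to show that \emph{every} $L^p$-solution $h$ of $(\lambda I-T)h=g_\lambda$ has the form $\tfrac{\lambda}{\lambda^2-1}g_\lambda+\bigl(\tfrac{ix}{\lambda^2-1}+C\bigr)\tfrac{g_\lambda}{w}$, hence involves $g_\lambda/w$, whose modulus is $1/w\notin L^{2,r}$; so no solution lies in $L^{2,r}$ and $\lambda I-T_{2,r}$ is not surjective. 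You instead argue by contradiction, transferring a hypothetical resolvent from $L^{2,r}$ and $L^{2,r'}$ (via the duality $(T_{2,r})^*=-T_{2,r'}$ and the symmetry of Proposition \ref{p1}) down to $L^2$ through the equal-first-index reiteration identity $(L^{2,r},L^{2,r'})_{1/2,2}=L^2$, with the consistency of the two resolvents secured by J\"orgens' kernel description and $\xi_\lambda\notin L^{2,s}$ for $s<\infty$; this contradicts Widom's $\sigma(T_2)=[-1,1]$. Both arguments are sound; the trade-offs are worth noting. Your route avoids the Tricomi/J\"orgens computations but leans on the Holmstedt/Lions--Peetre reiteration theorem for Lorentz couples with equal primary index (standard but not in the paper's toolkit), forces a case split ($r=2$ trivial; $r=1$ handled by the point spectrum of $T_{2,\infty}$ and closedness, which is precisely the paper's own Proposition \ref{p5} technique), and delivers only the membership $\lambda\in\sigma(T_{2,r})$. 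The paper's construction yields the stronger, reusable fact that $\lambda I-T_X$ fails to be surjective whenever $|x|^{-1/2}\notin X$ --- this is exactly what is recycled later in the proofs of Proposition \ref{p7}(c) and Corollary \ref{c2} for general r.i.\ spaces, where your interpolation identity has no analogue.
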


\begin{proof} 
Let $1\le r<\infty$. It suffices to show that $(-1,1)\subseteq \sigma(T_{2,r})$. For this  we will  prove,  for every
$\lambda\in(-1,1)$, that the operator $\lambda I- T_{2,r}\colon L^{2,r}\to L^{2,r}$ is not surjective.
Fix $\lambda\in(-1,1)$ and consider the corresponding eigenfunction $\xi_\lambda$ given in \eqref{xi}.
Define
$$
g_\lambda(x):=w(x) \xi_\lambda(x)= \left(\frac{1-x}{1+x}\right)^{z(\lambda)},
\quad |x|<1,
$$
with $w(x):=(1-x^2)^{1/2}$ for $|x|<1$ and recall (cf. \eqref{z})  that
$$
z(\lambda)=\frac{1}{2\pi i}\log\left(\frac{1+\lambda}{1-\lambda}\right),\qquad z(0)=0.
$$
Since $\lambda\in(-1,1)$, we have that $(1+\lambda)/(1-\lambda)\in(0,\infty)$. Thus, $z(\lambda)$ is purely 
imaginary and so $|g_\lambda(x)|=1$ for  $|x|<1$. Hence, $g_\lambda\in L^{2,r}$.

We will show next that there exists no function $h\in L^{2,r}$ satisfying $(\lambda I-T_{2,r})(h)=g_\lambda$. This will then establish the proposition.

Assume, on the contrary, that  there does exist $h\in L^{2,r}$ such that $(\lambda I-T_{2,r})(h)=g_\lambda$.
Choose $1<p<2$. Then the operator  $\lambda I -T_p\colon L^p\to L^p$ 
is an $L^p$-valued extension of $\lambda I -T_{2,r}\colon L^{2,r}\to L^{2,r}$ 
since $L^{2,r}\subseteq L^p$. By a  result of J\"orgens,
$\lambda I -T_p\colon L^p\to L^p$  
has a pseudo-inverse $\widehat R(\lambda, T_p)\colon L^p\to L^p$, namely
$$
\widehat R(\lambda, T_p)(f):=
\frac{1}{\lambda^2-1} \Big(\lambda f+ \xi_\lambda T\big(f/\xi_\lambda\big)\Big),
\quad f\in L^p,
$$
which satisfies $(\lambda I -T_p)\widehat R(\lambda, T_p)=I$ on $L^p$,
\cite[Theorem  13.9, (13.67)]{jorgens}.

For the choice $f:=g_\lambda=w \xi_\lambda$ we have that
\begin{equation*}
\widehat R(\lambda, T_p)(g_\lambda)=
\frac{1}{\lambda^2-1} \Big(\lambda g_\lambda+ \frac{g_\lambda}{w} T(w)\Big).
\end{equation*}
By applying the identity $T(1/w)=0$ (see, \cite[\S4.3, (7)]{widom}), let us verify $T(w)(x)=ix$ for
$|x|<1$ as done in \cite[\S11.40, (11.56), p.\ 530]{king}. Namely,
\begin{align*}
T(w)(x) &=T\bigg(\frac{w^2}{w}\bigg)(x)=
\lim_{\varepsilon\to0^+} \frac{1}{\pi i}
\left(\int_{-1}^{x-\varepsilon}+\int_{x+\varepsilon}^1\right) 
\frac{(x^2-t^2)^{1/2}}{(t-x)(1-t^2)^{1/2}}\,dt
\\ &=
- \frac{1}{\pi i} \int_{-1}^1
\frac{(x+t)^{1/2}}{(1-t^2)^{1/2}}\,dt
= ix. 
\end{align*}
Hence, it follows that
\begin{equation}\label{eq-1}
\widehat R(\lambda, T_p)(g_\lambda)=
\frac{1}{\lambda^2-1} \Big(\lambda g_\lambda+ \frac{i x g_\lambda}{w} \Big).
\end{equation}

Note that a function $f\in L^{p}$ satisfies $(\lambda I-T_{p})(f)=g_\lambda$
precisely when there exists a constant $C\in\C$ such that
\begin{equation}\label{eq-2}
f=\frac{C g_\lambda}{w} + \widehat R(\lambda, T_p)(g_\lambda),
\end{equation}
which follows from the fact that $\mathrm{Ker}(\lambda I -T_{p})=
\langle \xi_\lambda\rangle$; see Remark \ref{r1}(b).

Since $h\in L^{2,r}\subseteq L^p$ satisfies $(\lambda I-T_{2,r})(h)=g_\lambda$
with $g_\lambda\in L^{p}$, \eqref{eq-1} and \eqref{eq-2} imply, for some constant $C\in\C$, that 
$$
h= C \frac{g_\lambda}{w}+
\frac{1}{\lambda^2-1} \Big(\lambda g_\lambda+ \frac{i x g_\lambda}{w} \Big),
$$
that is,
$$
h=\frac{\lambda }{\lambda^2-1} g_\lambda+ \Big(\frac{i x}{\lambda^2-1}  +C\Big)\frac{g_\lambda}{w}.
$$
In view of the fact that $|g_\lambda|=1$ and  
$\big((i x)/(\lambda^2-1)  +C\big)(g_\lambda/w)\notin  L^{2,r}$ 
for every choice of $C\in\C$, it follows that   $h\notin L^{2,r}$.
\end{proof}


According to Propositions \ref{p3}, \ref{p5} and \ref{p6}  the spectrum of $T_{p,r}$ is identified as $\sigma(T_{p,r})=\mathcal{R}_p$,    
for $1<p<\infty$  and $1\le r< \infty$. We now turn our attention
to study the fine spectra of $T_{p,r}$. Of course,
the identification of the point spectrum $\sigma_{\text{pt}}(T_{p,r}) $ is
already given in  Proposition \ref{p4}.


\begin{remark}\label{r2}
Recall, for the spectrum of a bounded linear operator $B$ on a Banach space,
that $\sigma_{\mathrm{r}}(B)\subseteq \sigma_{\mathrm{pt}}(B^*)$
as well as 
$$
\sigma_{\mathrm{pt}}(B^*)\subseteq \sigma_{\mathrm{pt}}(B)\cup \sigma_{\mathrm{r}}(B)
\quad\text{and}\quad 
\sigma_{\mathrm{pt}}(B)\subseteq \sigma_{\mathrm{pt}}(B^*)\cup \sigma_{\mathrm{r}}(B^*),
$$
\cite[Theorem 5.13]{jorgens}. In the event that $\sigma_{\mathrm{pt}}(B)=\emptyset$
it follows that $\sigma_{\mathrm{r}}(B)= \sigma_{\mathrm{pt}}(B^*)$.
\end{remark}


\begin{theorem}\label{t3}
Let  $1<p<\infty$  and $1\le r< \infty$. Then
$$
\sigma(T_{p,r})=\mathcal{R}_p.
$$

Regarding the fine spectra of $T_{p,r}$ the following 
identifications hold.
\begin{itemize}
\item [(a)]  Let $1<p<2$ and $1\le r< \infty$. Then 
$$
\sigma_{\mathrm{pt}}(T_{p,r}) =\mathrm{int}(\mathcal{R}_p);\;
\sigma_{\mathrm{r}}(T_{p,r})=\emptyset;\;
\sigma_{\mathrm{c}}(T_{p,r})=\partial\mathcal{R}_p.
$$
\item[(b)] Let $2<p<\infty$ and $1<r<\infty$. Then 
$$
\sigma_{\mathrm{pt}}(T_{p,r}) =\emptyset;\;
\sigma_{\mathrm{r}}(T_{p,r})=\mathrm{int}(\mathcal{R}_p);\;
\sigma_{\mathrm{c}}(T_{p,r})=\partial\mathcal{R}_p.
$$
\item[(c)] Let $2<p<\infty$ and $r=1$. Then 
$$
\sigma_{\mathrm{pt}}(T_{p,1}) =\emptyset;\;
\sigma_{\mathrm{r}}(T_{p,1})=\mathcal{R}_p\setminus\{\pm1\};\;
\sigma_{\mathrm{c}}(T_{p,1})=\{\pm1\}.
$$
\item [(d)]  Let $p=2$ and $r=1$. Then 
$$
\sigma_{\mathrm{pt}}(T_{2,1}) =\emptyset;\;
\sigma_{\mathrm{r}}(T_{2,1})=(-1,1);\;
\sigma_{\mathrm{c}}(T_{2,1})=\{\pm1\}.
$$
\item[(e)] Let $p=2$ and $1<r<\infty$. Then 
$$
\sigma_{\mathrm{pt}}(T_{2,r}) =\emptyset;\;
\sigma_{\mathrm{r}}(T_{2,r})=\emptyset;\;
\sigma_{\mathrm{c}}(T_{2,r})=[-1,1].
$$
\end{itemize}
\end{theorem}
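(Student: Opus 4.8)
The plan is to pin down the full spectrum first and then to extract the three fine spectra, using duality to identify the residual part and taking set-theoretic complements to recover the continuous part.

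The spectrum itself requires no new work: Proposition \ref{p3} already gives $\sigma(T_{p,r})\subseteq\mathcal{R}_p$ for all admissible $p$ and $r$, and I would obtain the reverse inclusion $\mathcal{R}_p\subseteq\sigma(T_{p,r})$ from Proposition \ref{p5} when $p\neq2$ and from Proposition \ref{p6} when $p=2$, so that $\sigma(T_{p,r})=\mathcal{R}_p$ in every case. The point spectra are likewise already available from Proposition \ref{p4}: they equal $\mathrm{int}(\mathcal{R}_p)$ when $1<p<2$ and $\emptyset$ when $p\ge2$.

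The heart of the proof is the residual spectrum, which I would determine by duality. By separability one has $(L^{p,r})^*=L^{p',r'}$ with $(T_{p,r})^*=-T_{p',r'}$ for $1<r<\infty$, and $(L^{p,1})^*=L^{p',\infty}$ with $(T_{p,1})^*=-T_{p',\infty}$; moreover Proposition \ref{p1} renders every fine spectrum invariant under reflection through $0$, which lets me replace $\sigma_*(-B)$ by $\sigma_*(B)$ freely. I would then invoke Remark \ref{r2}: in the cases $p\ge2$, where $\sigma_{\mathrm{pt}}(T_{p,r})=\emptyset$, this yields the equality $\sigma_{\mathrm{r}}(T_{p,r})=\sigma_{\mathrm{pt}}((T_{p,r})^*)$, while in case (a), where the point spectrum is nonempty, I would use only the inclusion $\sigma_{\mathrm{r}}(T_{p,r})\subseteq\sigma_{\mathrm{pt}}((T_{p,r})^*)$ and check that the right-hand side vanishes. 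Feeding in the dual index $p'$ then settles each case: in (a) the dual index $p'>2$ forces $\sigma_{\mathrm{pt}}(T_{p',r'})=\emptyset$ (Proposition \ref{p4}(c)) for $r>1$ and $\sigma_{\mathrm{pt}}(T_{p',\infty})=\emptyset$ (Proposition \ref{p2}(a)) for $r=1$, so $\sigma_{\mathrm{r}}(T_{p,r})=\emptyset$; in (b) the index $p'<2$ gives $\sigma_{\mathrm{pt}}(T_{p',r'})=\mathrm{int}(\mathcal{R}_{p'})=\mathrm{int}(\mathcal{R}_p)$, whence $\sigma_{\mathrm{r}}(T_{p,r})=\mathrm{int}(\mathcal{R}_p)$; in (c) Proposition \ref{p2}(b), equivalently Remark \ref{r1}(c), gives $\sigma_{\mathrm{pt}}(T_{p',\infty})=\mathcal{R}_p\setminus\{\pm1\}$, so $\sigma_{\mathrm{r}}(T_{p,1})=\mathcal{R}_p\setminus\{\pm1\}$; in (d) Remark \ref{r1}(c) gives $\sigma_{\mathrm{pt}}(T_{2,\infty})=(-1,1)$, so $\sigma_{\mathrm{r}}(T_{2,1})=(-1,1)$; and in (e) the fact that $p'=2$ with $r'>1$ forces $\sigma_{\mathrm{pt}}(T_{2,r'})=\emptyset$, whence $\sigma_{\mathrm{r}}(T_{2,r})=\emptyset$.

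Each continuous spectrum would then be read off as the complement $\sigma_{\mathrm{c}}(T_{p,r})=\mathcal{R}_p\setminus\big(\sigma_{\mathrm{pt}}(T_{p,r})\cup\sigma_{\mathrm{r}}(T_{p,r})\big)$, using the disjointness of the three parts; this returns $\partial\mathcal{R}_p=\mathcal{R}_p\setminus\mathrm{int}(\mathcal{R}_p)$ in (a) and (b), $\{\pm1\}$ in (c) and (d), and $[-1,1]$ in (e). I expect the main obstacle to lie not in the present theorem but upstream, in Proposition \ref{p6}: its non-surjectivity argument is what underpins the $p=2$ spectrum and, in case (e), is what forces $\sigma_{\mathrm{c}}(T_{2,r})=[-1,1]$ even though both the point and residual spectra are empty. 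The one delicate point internal to this proof is the duality bookkeeping at the endpoint $r=1$, where the adjoint acts on the non-separable space $L^{p',\infty}$, so that the point spectrum of the dual must be taken from Proposition \ref{p2}(b) with its attained index rather than from Proposition \ref{p4}.
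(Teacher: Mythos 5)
Your proposal is correct, and in every case except (a) it matches the paper's own proof step for step: $\sigma(T_{p,r})=\mathcal{R}_p$ from Propositions \ref{p3}, \ref{p5}, \ref{p6}; the point spectra from Proposition \ref{p4}; the residual spectra in cases (b)--(e) from the equality $\sigma_{\mathrm{r}}(T_{p,r})=\sigma_{\mathrm{pt}}((T_{p,r})^*)$ of Remark \ref{r2}, combined with the dual identifications $(T_{p,r})^*=-T_{p',r'}$ and $(T_{p,1})^*=-T_{p',\infty}$, the symmetry of Proposition \ref{p1}, and Proposition \ref{p4} or Remark \ref{r1}(c) (equivalently Proposition \ref{p2}(b)) for the adjoint's point spectrum; then the continuous spectra by complementation. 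The genuine divergence is case (a), $1<p<2$. There the paper does not argue by duality at all: it shows directly that $(\lambda I-T)(L^{p,r})$ is dense in $L^{p,r}$ for every $\lambda\in\C$, because $L^2$ is dense in $L^{p,r}$ and $(\lambda I-T)(L^2)$ is either all of $L^2$ (when $\lambda\notin\sigma(T_2)$) or dense in $L^2$ (when $\lambda\in\sigma(T_2)=\sigma_{\mathrm{c}}(T_2)$), so no residual spectrum can exist. You instead use the one-sided inclusion $\sigma_{\mathrm{r}}(T_{p,r})\subseteq\sigma_{\mathrm{pt}}((T_{p,r})^*)$ from Remark \ref{r2} (correctly noting that the equality is unavailable here, since $\sigma_{\mathrm{pt}}(T_{p,r})\neq\emptyset$) and then kill the right-hand side: Proposition \ref{p4}(c) for $1<r<\infty$, and Proposition \ref{p2}(a) applied to the non-separable space $L^{p',\infty}$ with attained index $p'>2$ when $r=1$. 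Both routes are sound. Yours is the more uniform one, reducing all five cases to the adjoint's point spectrum, which Section \ref{S3} already provides; the paper's range-density argument avoids any appeal to the dual-space identification in case (a), and it is the template reused later (e.g.\ in Corollary \ref{c3}) for general r.i.\ spaces, where no concrete description of $X^*$ is available.
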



\begin{proof} The fact that $\sigma(T_{p,r})=\mathcal{R}_p$ was noted
prior to Remark \ref{r2}.

(a)  To see that $\sigma_{\mathrm{r}}(T_{p,r})=\emptyset$, let $\lambda\in\C$. Since
$L^2\subseteq L^{p,r}$, it follows that $(\lambda I-T)(L^2)\subseteq (\lambda I-T)(L^{p,r})$.
Note, for $\lambda\notin\sigma(T_2)$ that $(\lambda I-T)(L^2)=L^2$ and for
$\lambda\in\sigma(T_2)$, since $\sigma(T_2)=\sigma_{\mathrm{c}}(T_2)$, that
$(\lambda I-T)(L^2)$ is dense in $L^2$. The density of $L^2$  in  $L^{p,r}$ implies
that $(\lambda I-T)(L^{p,r})$ is dense in $L^{p,r}$. So, 
$\lambda\notin\sigma_{\mathrm{r}}(T_{p,r})$. Hence, $\sigma_{\mathrm{r}}(T_{p,r})=\emptyset$.
Moreover $\sigma_{\mathrm{pt}}(T_{p,r}) =\mathrm{int}(\mathcal{R}_p)$;
see Proposition \ref{p4}(a). Accordingly, as $\sigma(T_{p,r}) =\mathcal{R}_p$, it follows that
$\sigma_{\mathrm{c}}(T_{p,r}) =\partial \mathcal{R}_p$.

(b) Since $\sigma_{\mathrm{pt}}(T_{p,r})=\emptyset$ 
(see Proposition \ref{p4}(c)), it follows
from Remark \ref{r2} and  the symmetry of the point spectrum 
in Proposition \ref{p1}(a)  that
$$
\sigma_{\mathrm{r}}(T_{p,r})=\sigma_{\mathrm{pt}}((T_{p,r})^*)=
\sigma_{\mathrm{pt}}(-T_{p',r'})=\sigma_{\mathrm{pt}}(T_{p',r'}).
$$ 
But,
$1<p'<2$ and $1<r'<\infty$. So, $\sigma_{\mathrm{pt}}(T_{p',r'})
=\mathrm{int}(\mathcal{R}_{p'}) =\mathrm{int}(\mathcal{R}_{p})$;
see Proposition \ref{p4}(a).
Hence, $\sigma_{\mathrm{r}}(T_{p,r})=\mathrm{int}(\mathcal{R}_{p})$.
Finally, $\sigma_{\mathrm{c}}(T_{p,r})=
\sigma(T_{p,r})\setminus \sigma_{\mathrm{r}}(T_{p,r})=
\mathcal{R}_{p}\setminus\mathrm{int}(\mathcal{R}_{p})
=\partial \mathcal{R}_{p}$.

(c) Arguing as in (b), since 
$\sigma_{\mathrm{pt}}(T_{p,1})=\emptyset$ by Proposition \ref{p4}(c), it again follows 
from Remark \ref{r2} that  
$\sigma_{\mathrm{r}}(T_{p,1})=\sigma_{\mathrm{pt}}((T_{p,1})^*)=
\sigma_{\mathrm{pt}}(-T_{p',\infty})=\sigma_{\mathrm{pt}}(T_{p',\infty})=
\mathcal{R}_{p'}\setminus\{\pm1\}=\mathcal{R}_{p}\setminus\{\pm1\}$;
see Remark \ref{r1}(c). 
It is now clear that $\sigma_{\mathrm{c}}(T_{p,1})=\{\pm1\}$.

(d) Since $\sigma_{\mathrm{pt}}(T_{2,1})=\emptyset$
by Proposition \ref{p4}(b), Remark \ref{r2}
implies that $\sigma_{\mathrm{r}}(T_{2,1})=\sigma_{\mathrm{pt}}((T_{2,1})^*)$ and so, by
the symmetry of the point spectrum, it follows (again by Remark \ref{r1}(c)) that  
$\sigma_{\mathrm{r}}(T_{2,1})=\sigma_{\mathrm{pt}}((T_{2,1})^*)
=\sigma_{\mathrm{pt}}(-T_{2,\infty})=\sigma_{\mathrm{pt}}(T_{2,\infty})=(-1,1)$.
Accordingly, $\sigma_{\mathrm{c}}(T_{2,1})=[-1,1]\setminus(-1,1)=\{\pm1\}$.

(e) Since $\sigma_{\mathrm{pt}}(T_{2,r})=\emptyset$,
Remark \ref{r2} implies that $\sigma_{\mathrm{r}}(T_{2,r})=\sigma_{\mathrm{pt}}((T_{2,r})^*)$.
So, we can use the symmetry of the point spectrum 
to obtain $\sigma_{\mathrm{r}}(T_{2,r})=\sigma_{\mathrm{pt}}((T_{2,r})^*)
=\sigma_{\mathrm{pt}}(-T_{2,r'})=\sigma_{\mathrm{pt}}(T_{2,r'})=\emptyset$.
The last equality is due to Proposition \ref{p4}(b) as $1<r'<\infty$.
\end{proof}


The following table gives a complete summary of the results in 
Theorem \ref{t3}. 


\medskip
\begin{center}
\begin{tabular}{ |c | c| c | c |  c| }
\hline
   	$L^{p,r}$ &$\sigma(T_{p,r})=\mathcal{R}_p$&$\sigma_{\mathrm{pt}}(T_{p,r})$& $\sigma_{\mathrm{r}}		(T_{p,r})$&$\sigma_{\mathrm{c}} (T_{p,r})$
\\ \hline 
     	&& & &
\\ \hline
 	$1<p<2$&$1\le r<\infty$  &$\mathrm{int}(\mathcal{R}_p)$ & $\emptyset$&$\partial \mathcal{R}_p$
\\ \hline
     	&&& &
\\ \hline
  	$2<p<\infty$& $r=1$ &$\emptyset$&$\mathcal{R}_p\setminus\{\pm1\}$&$\{\pm1\}$
\\ \hline
 	&$1< r<\infty$  & $\emptyset$&$\mathrm{int}(\mathcal{R}_p)$  &$\partial \mathcal{R}_p$
\\ \hline
   	&& &&
\\ \hline
   	$p=2$&$r=1$ &$\emptyset$&$(-1,1)$&$\{\pm1\}$
\\ \hline
 	&$1< r<\infty$  & $\emptyset$&$\emptyset$  &$[-1,1]$
\\ \hline
\end{tabular}
\end{center}
\medskip


\section{Further general properties of the spectrum of $T_X$}
\label{S5}


It turns out that the spectrum of $T_X$ has a number of important general properties.
We will use the fact that the functions $|x|^{-1/2}$ and $(1-x^2)^{-1/2}$ on $(-1,1)$ are 
equimeasurable  and hence,
that $L^{2,\infty}$ is the smallest r.i.\ space containing $|x|^{-1/ 2}$. That is, 
$|x|^{-1/2}\in X$ if and only if $L^{2,\infty}\subseteq X$.

\begin{proposition}\label{p7}
Let $X$ be a  separable r.i.\ space  on $(-1,1)$  such that   $0<\underline{\alpha}_X\le \overline{\alpha}_X<1$. 
Then necessarily
$$
0\in\sigma(T_X).
$$

Moreover, precisely one of the following alternatives holds.
\begin{itemize}
\item[(a)]  The following conditions are equivalent. 
\begin{itemize}
\item[(i)] $0\in\sigma_{\mathrm{pt}}(T_X)$.
\item[(ii)] $\sigma_{\mathrm{pt}}(T_X)\not=\emptyset$.
\item[(iii)] $|x|^{-1/2}\in X$.
\item[(iv)] $L^{2,\infty}\subseteq X$.
\item[(v)] $(-1,1)\subseteq\sigma_{\mathrm{pt}}(T_X)$.
\end{itemize}

\item[(b)] The following conditions are equivalent. 
\begin{itemize}
\item[(i)] $0\in\sigma_{\mathrm{r}}(T_X)$.
\item[(ii)] $\sigma_{\mathrm{r}}(T_X)\not=\emptyset$.
\item[(iii)] $|x|^{-1/2}\in X'$.
\item[(iv)] $X\subseteq L^{2,1}$.
\item[(v)] $(-1,1)\subseteq\sigma_{\mathrm{r}}(T_X)$.
\end{itemize}

\item[(c)]  The following conditions are equivalent. 
\begin{itemize}
\item[(i)] $0\in\sigma_{\mathrm{c}}(T_X)$.
\item[(ii)] $|x|^{-1/2}\notin X$ and $|x|^{-1/2}\notin X'$.
\item[(iii)] $\sigma(T_X)=\sigma_{\mathrm{c}}(T_X)\supseteq[-1,1]$.
\end{itemize}
\end{itemize}
\end{proposition}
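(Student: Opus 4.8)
The plan is to read the entire statement through the position of $X$ relative to the two Lorentz spaces $L^{2,1}\subsetneq L^{2,\infty}$, exploiting the two facts recalled in Section \ref{S1}: $T_X$ is injective iff $L^{2,\infty}\not\subseteq X$, and, for separable $X$, $T_X$ has dense range iff $X\not\subseteq L^{2,1}$. Since $L^{2,\infty}\subseteq X$ and $X\subseteq L^{2,1}$ are incompatible, exactly one of three mutually exclusive cases holds: $L^{2,\infty}\subseteq X$, or $X\subseteq L^{2,1}$, or neither; these will correspond to (a), (b), (c). Because the fine spectra are pairwise disjoint and exhaust $\sigma(T_X)$, once $0\in\sigma(T_X)$ is known the ``precisely one'' assertion is automatic, so it suffices to locate $0$ correctly in each case and to promote each ``$0\in\cdot$'' to the listed chain of equivalences.

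For (a) I would argue as follows: (iii)$\Leftrightarrow$(iv) is the equimeasurability of $|x|^{-1/2}$ and $(1-x^2)^{-1/2}$ recorded before the statement; (i)$\Leftrightarrow$(iv) is the injectivity criterion at $\lambda=0$; (ii)$\Rightarrow$(i) is the $\R$-balancedness of $\sigma_{\mathrm{pt}}(T_X)$ (Proposition \ref{p1}(b)), which forces $0=0\cdot\lambda\in\sigma_{\mathrm{pt}}(T_X)$; and (i)$\Rightarrow$(v) combines monotonicity of the point spectrum (Remark \ref{r1}(a)) with $\sigma_{\mathrm{pt}}(T_{L^{2,\infty}})=(-1,1)$ (Remark \ref{r1}(c)). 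Case (b) is the dual picture through $X'$, using $(T_X)^*=-T_{X'}$, $(L^{2,1})'=L^{2,\infty}$, $X''=X$ and Remark \ref{r2}: here (iii)$\Leftrightarrow$(iv) is equimeasurability in $X'$, while $X\subseteq L^{2,1}$ already forces $L^{2,\infty}\not\subseteq X$, so $T_X$ is injective and (i)$\Leftrightarrow$(iv) holds directly (and $\sigma_{\mathrm{pt}}(T_X)=\emptyset$, whence $\sigma_{\mathrm{r}}(T_X)=\sigma_{\mathrm{pt}}((T_X)^*)=\sigma_{\mathrm{pt}}(T_{X'})$). Then $\sigma_{\mathrm{r}}(T_X)\subseteq\sigma_{\mathrm{pt}}(T_{X'})$ together with $\R$-balancedness on $X'$ gives (ii)$\Rightarrow$(i), and applying (a) to the r.i.\ space $X'$ (the implications needed there use only the Boyd-index facts, not separability of $X'$) yields $(-1,1)\subseteq\sigma_{\mathrm{pt}}(T_{X'})=\sigma_{\mathrm{r}}(T_X)$, i.e.\ (i)$\Rightarrow$(v).

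In (c) the equivalence (i)$\Leftrightarrow$(ii) is the complementary reading of (a),(b): $0\in\sigma_{\mathrm{c}}(T_X)$ precisely when $0$ lies in neither $\sigma_{\mathrm{pt}}$ nor $\sigma_{\mathrm{r}}$, i.e.\ $|x|^{-1/2}\notin X$ and $|x|^{-1/2}\notin X'$. Moreover $\sigma_{\mathrm{pt}}(T_X)=\emptyset$ ($\R$-balancedness) and $\sigma_{\mathrm{r}}(T_X)=\sigma_{\mathrm{pt}}(T_{X'})=\emptyset$ (since $L^{2,\infty}\not\subseteq X'$), so $\sigma(T_X)=\sigma_{\mathrm{c}}(T_X)$; the remaining content of (iii), namely $[-1,1]\subseteq\sigma(T_X)$, is the main obstacle, and it simultaneously supplies $0\in\sigma(T_X)$ in this case.

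I would obtain the sharper $(-1,1)\subseteq\sigma(T_X)$ by transcribing the argument of Proposition \ref{p6} to a general $X$ in case (c). Fix $\lambda\in(-1,1)$ and the unimodular function $g_\lambda=w\xi_\lambda$ (with $w=(1-x^2)^{1/2}$), so $g_\lambda\in L^\infty\subseteq X$; choose $1<p<2$ with $X\subseteq L^p$ (possible since $X\subseteq L^{p_0}$ for some $p_0>1$ by Lemma \ref{l1}, while $L^{p_0}\subseteq L^p$ whenever $p\le p_0$). Any $h\in X$ with $(\lambda I-T_X)h=g_\lambda$ then solves the same equation in $L^p$, so by J\"orgens' pseudo-inverse and $\mathrm{Ker}(\lambda I-T_p)=\langle\xi_\lambda\rangle$ it has, from \eqref{eq-1}--\eqref{eq-2}, the form $h=\frac{\lambda}{\lambda^2-1}g_\lambda+\big(\frac{ix}{\lambda^2-1}+C\big)\frac{g_\lambda}{w}$ for some $C\in\C$. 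The decisive point is the endpoint behaviour: the coefficient $\frac{ix}{\lambda^2-1}+C$ tends to the two distinct limits $\frac{\pm i}{\lambda^2-1}+C$ as $x\to\pm1$, hence is nonzero at (at least) one endpoint, so that $|h|\gtrsim(1-x^2)^{-1/2}$ on a one-sided neighbourhood there; truncating and using that any function equimeasurable near $0$ with $t^{-1/2}$ generates $L^{2,\infty}$ would force $L^{2,\infty}\subseteq X$, contradicting case (c). Thus $g_\lambda\notin(\lambda I-T_X)(X)$, giving $\lambda\in\sigma(T_X)$; as $\lambda$ ranges over $(-1,1)$ and $\sigma(T_X)$ is closed, $[-1,1]\subseteq\sigma(T_X)=\sigma_{\mathrm{c}}(T_X)$. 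Specialising to $\lambda=0$ (where $g_0=1$ and $h=(C-ix)/w$) records $0\in\sigma(T_X)$ in case (c), while in cases (a),(b) the relations $0\in\sigma_{\mathrm{pt}}(T_X)$, resp.\ $0\in\sigma_{\mathrm{r}}(T_X)$, established above already place $0$ in the spectrum, completing the global claim.
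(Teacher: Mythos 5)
Your proof is correct, and its skeleton is the same as the paper's: the trichotomy $L^{2,\infty}\subseteq X$, $X\subseteq L^{2,1}$, or neither; duality through $X'$ via Remark \ref{r2} and $(T_X)^*=-T_{X'}$; $\R$-balancedness (Proposition \ref{p1}(b)) to promote ``non-empty'' to ``contains $0$''; part (a) applied to the (possibly non-separable) space $X'$; and the transcription of the Proposition \ref{p6} argument to get $[-1,1]\subseteq\sigma(T_X)$ in case (c). There are two genuine differences worth recording. First, you import the inversion-theoretic criteria quoted in the Introduction ($T_X$ injective iff $L^{2,\infty}\not\subseteq X$; for separable $X$, non-dense range iff $X\subseteq L^{2,1}$) as black boxes, which yields (a)(i)$\Leftrightarrow$(iv) and (b)(i)$\Leftrightarrow$(iv) at once; the paper never invokes these criteria, but re-derives what it needs self-containedly: $0\in\sigma_{\mathrm{pt}}(T_X)$ forces $\xi_0=1/w\in X$ via the eigenspace description, and part (b) is run entirely through $\sigma_{\mathrm{pt}}(T_{X'})$, Remark \ref{r2}, monotonicity of the point spectrum and $\sigma_{\mathrm{pt}}(T_{2,1})=\emptyset$ (Theorem \ref{t3}(d)), with no range-density characterization. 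Second, the paper proves $0\in\sigma(T_X)$ upfront and unconditionally, using Tricomi's identities $T(1/w)=0$ and $T(ix/w(x))=\mathbf{1}$ together with the kernel description $T(f)=0\Rightarrow f=C/w$, so that when $1/w\notin X$ the equation $T(f)=\mathbf{1}$ has no solution in $X$ and $T_X$ fails to be surjective; you never use $T(ix/w)=\mathbf{1}$, instead extracting $0\in\sigma(T_X)$ case by case, with case (c) carried by the J\"orgens pseudo-inverse argument. Both routes are valid: yours is more modular but leans on results from the authors' earlier papers, while the paper's is self-contained within the toolkit it develops. A point in your favour: in case (c) you treat the constant $C$ carefully --- what one actually obtains is $\bigl(ix/(\lambda^2-1)+C\bigr)\xi_\lambda\in X$ with the affine coefficient nonvanishing at one endpoint, whence the truncation/equimeasurability argument forces $L^{2,\infty}\subseteq X$ --- which is sharper than the paper's terse instruction to ``argue as in the proof of Proposition \ref{p6}''.
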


\begin{proof}
Recall that  $w(x):=(1-x^2)^{1/2}$ for $|x|<1$.
Suppose that  $1/w\in X$. Since $T(1/w)=0$, \cite[\S4.3,(7)]{tricomi}, it follows that $T_X$
is not injective and so $0\in\sigma(T_X)$.

Suppose that $1/w\notin X$. Since $1/w\in L^1$ it follows from
\cite[\S4.3 (22) with $n=1$]{tricomi}  that
$T(ix/w(x))=\mathbf{1}$; see also \cite[Lemma 2.7]{okada-elliott} with $f=\mathbf{1}$. 
Moreover, if $T(f)(t)=0$ for  almost all $t\in(-1,1)$ for some 
$f\in\bigcup_{1<p<\infty}L^p$,  then $f=C/w$ for some 
constant $C\in\C$, \cite[\S4.3 (14)]{tricomi}. It follows that $T(f)=\mathbf{1}$ if and only if 
$f(x)=(ix+C)/w(x)$ for $C\in\C$. But,  $1/w\notin X$ and so $(ix+C)/w(x)\notin X$ for any $C\in\C$. Consequently,
$\mathbf{1}\notin T(X)$, and so $T_X$ is not surjective. Hence, $0\in\sigma(T_X)$.

(a) (ii)$\Rightarrow$(i) Since $\sigma_{\mathrm{pt}}(T_X)$ is $\R$-balanced,
$\sigma_{\mathrm{pt}}(T_X)\not=\emptyset$ implies that  $0\in\sigma_{\mathrm{pt}}(T_X)$.
(i)$\Rightarrow$(iii) Note that $0\in\sigma_{\mathrm{pt}}(T_X)$ implies that the eigenfunction
$\xi_0=1/w\in X$, which is equivalent to $|x|^{-1/2}\in X$.
(iii)$\Rightarrow$(iv)  The condition $|x|^{-1/2}\in X$ implies that $L^{2,\infty}\subseteq X$ by definition of the
Marcinkiewicz space $L^{2,\infty}$; \cite[\S2]{curbera-okada-ricker-1}. 
(iv)$\Rightarrow$(v) For $\lambda\in(-1,1)$ we have  (see Proposition \ref{p6}) 
that $|\xi_\lambda|=1/w\in L^{2,\infty}\subseteq X$ and  
so $\lambda\in\sigma_{\mathrm{pt}}(T_X)$.
(v)$\Rightarrow$(ii) Clear.

(b) Since  $X$ is separable, we have that $X^*=X'$. 
(i)$\Rightarrow$(ii) Clear.
(ii)$\Rightarrow$(iii) By Remark \ref{r2} we have that $\sigma_{\mathrm{r}}(T_X)\subseteq
\sigma_{\mathrm{pt}}((T_X)^*) = \sigma_{\mathrm{pt}}(-T_{X'})=
\sigma_{\mathrm{pt}}(T_{X'})$. Hence, the assumption
$\sigma_{\mathrm{r}}(T_X)\not=\emptyset$ implies that
$\sigma_{\mathrm{pt}}(T_{X'})\not=\emptyset$. This,  in turn, implies, via part (a), that
 $|x|^{-1/2}\in X'$. 
(iii)$\Rightarrow$(iv)  Via condition (iii), part (a) applied to $X'$ in place of $X$
yields $L^{2,\infty}\subseteq  X'$. Since $X$ has the Fatou property it follows
that $X=X''\subseteq (L^{2,\infty})'= L^{2,1}$. 
(iv)$\Rightarrow$(v) By (iv) we have $L^{2,\infty}\subseteq  X'$.
The monotonicity of the point spectrum (see Remark \ref{r1}(a)) and 
Remark \ref{r1}(c)
imply that $(-1,1)=\sigma_{\mathrm{pt}}(L^{2,\infty}) \subseteq \sigma_{\mathrm{pt}}(T_{X'})=
\sigma_{\mathrm{pt}}((T_{X})^*)$. Then Remark \ref{r2} yields
$(-1,1)\subseteq \sigma_{\mathrm{pt}}(T_{X}) \cup \sigma_{\mathrm{r}}(T_{X})$.  
Again by
the monotonicity of the point spectrum and Theorem \ref{t3}(d)
we have that
$\sigma_{\mathrm{pt}}(T_X) \subseteq \sigma_{\mathrm{pt}}(T_{2,1})=\emptyset$.
Accordingly,  $(-1,1)\subseteq\sigma_{\mathrm{r}}(T_{X})$, which is (v). 
Clearly (v)$\Rightarrow$(i).

(c) (i)$\Rightarrow$(ii) The condition $0\in\sigma_{\mathrm{c}}(T_X)$ implies   that 
$0\notin\sigma_{\mathrm{pt}}(T_X)$
and that $0\notin\sigma_{\mathrm{r}}(T_X)$.  Hence, parts (a) and (b) imply that $|x|^{-1/2}\notin X$ 
and $|x|^{-1/2}\notin X'$.
(ii)$\Rightarrow$(iii) Under the assumed conditions, parts (a) and (b) imply that $\sigma_{\mathrm{pt}}(T_{X})=
\sigma_{\mathrm{r}}(T_{X})=\emptyset$. Hence, $\sigma(T_X)=\sigma_{\mathrm{c}}(T_X)$.

To show that $(-1,1)\subseteq\sigma(T_X)$ fix $\lambda\in(-1,1)$. By the proof of 
Proposition \ref{p6}, $|g_\lambda|=1$ and so $g_\lambda\in X$. Choose $1<p<2$ such 
that $X\subseteq L^p$. Assuming that there exists $h\in X$ satisfying $(\lambda I-T_X)(h)=g_\lambda$,
we can argue as in the proof of the Proposition \ref{p6} to conclude that
$((ix)/(\lambda^2-1))\xi_\lambda \in X$. Now the assumption that $|x|^{-1/2}\notin X$
yields a contradiction. So, $(-1,1)\subseteq \sigma(T_{X})$ and hence,
$[-1,1]\subseteq \sigma(T_{X})$ as $\sigma(T_{X})$ is a closed set in $\C$.
Finally, (iii)$\Rightarrow$(i) is clear.
\end{proof}

\begin{remark}\label{r3}
(a) That (ii)$\Rightarrow$(i) in Proposition \ref{p7}(c),
whose proof was not required,  is of interest.
The condition $|x|^{-1/2}\notin X$ implies that $T_X$ is injective, that is,
$0\notin\sigma_{\mathrm{pt}}(T_X)$. Hence, by part (a), we have that 
$\sigma_{\mathrm{pt}}(T_X)=\emptyset$. Moreover, $|x|^{-1/2}\notin X'$ implies that $T_{X'}$ is injective,  that 
is, $0\notin\sigma_{\mathrm{pt}}(T_{X'})$. This  condition implies, via part (a), that 
$\sigma_{\mathrm{pt}}(T_{X'})=\emptyset$. The condition $\sigma_{\mathrm{pt}}(T_X)=\emptyset$ 
then implies that $\sigma_{\mathrm{r}}(T_{X})=\sigma_{\mathrm{pt}}(T_{X'})=\emptyset$;
see Remark \ref{r2}. So, 
$0\notin\sigma_{\mathrm{r}}(T_X)$. Since we know that $0\in\sigma(T_X)$, it follows 
that $0\in\sigma_{\mathrm{c}}(T_X)$.

(b) Note that the equivalences in (a) still hold for non-separable spaces.

(c) It is clear that \eqref{3cases} follows from Proposition \ref{p7}.
\end{remark}


\begin{corollary}\label{c1}
Let $X$ be a  separable r.i.\ space  on $(-1,1)$  such that   $0<\underline{\alpha}_X\le \overline{\alpha}_X<1$. 
The set $\sigma_{\mathrm{r}}(T_{X})$ is $\R$-balanced, that is, 
$\alpha\lambda\in \sigma_{\mathrm{r}}(T_{X})$ for each 
$\lambda\in \sigma_{\mathrm{r}}(T_{X})$ and every $\alpha\in\R$ satisfying $|\alpha|\le1$.
\end{corollary}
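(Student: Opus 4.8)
The plan is to transfer the $\R$-balanced property already established for the point spectrum in Proposition \ref{p1}(b) to the residual spectrum, exploiting the duality $\sigma_{\mathrm{r}}(T_X)=\sigma_{\mathrm{pt}}(T_{X'})$. The starting point is the observation from Remark \ref{r2}: since $X$ is separable we have $X^*=X'$ and $(T_X)^*=-T_{X'}$, so whenever $\sigma_{\mathrm{pt}}(T_X)=\emptyset$ the relation $\sigma_{\mathrm{r}}(T_X)=\sigma_{\mathrm{pt}}((T_X)^*)=\sigma_{\mathrm{pt}}(T_{X'})$ holds (here we also use symmetry of the point spectrum through $0$, from Proposition \ref{p1}(a), to replace $-T_{X'}$ by $T_{X'}$).

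First I would dispose of the degenerate case. By Proposition \ref{p7}, exactly one of the three alternatives (a), (b), (c) holds, and $\sigma_{\mathrm{r}}(T_X)\neq\emptyset$ occurs precisely in case (b), where in fact $X\subseteq L^{2,1}$ and $(-1,1)\subseteq\sigma_{\mathrm{r}}(T_X)$. In cases (a) and (c) the residual spectrum is empty (see Remark \ref{r3}(a) for case (c), and in case (a) the equivalence $\sigma_{\mathrm{pt}}(T_X)\neq\emptyset\iff X\supseteq L^{2,\infty}$ forces $X\not\subseteq L^{2,1}$, hence $\sigma_{\mathrm{r}}(T_X)=\emptyset$); the $\R$-balanced conclusion then holds vacuously. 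So I may assume we are in case (b), where in particular $\sigma_{\mathrm{pt}}(T_X)=\emptyset$, which is exactly the hypothesis needed to invoke $\sigma_{\mathrm{r}}(T_X)=\sigma_{\mathrm{pt}}(T_{X'})$.

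Next I would apply Proposition \ref{p1}(b) to the associate space $X'$. Since $X$ has non-trivial Boyd indices, so does $X'$ (the index identities $\underline{\alpha}_{X'}=1-\overline{\alpha}_X$ and $\overline{\alpha}_{X'}=1-\underline{\alpha}_X$ recalled in Section \ref{S2} guarantee $0<\underline{\alpha}_{X'}\le\overline{\alpha}_{X'}<1$), so Proposition \ref{p1}(b) applies verbatim to $T_{X'}$ and tells us that $\sigma_{\mathrm{pt}}(T_{X'})$ is $\R$-balanced. Now fix $\lambda\in\sigma_{\mathrm{r}}(T_X)$ and $\alpha\in\R$ with $|\alpha|\le1$. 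Then $\lambda\in\sigma_{\mathrm{pt}}(T_{X'})$, whence $\alpha\lambda\in\sigma_{\mathrm{pt}}(T_{X'})$ by the balanced property, and translating back through the same identity $\sigma_{\mathrm{r}}(T_X)=\sigma_{\mathrm{pt}}(T_{X'})$ gives $\alpha\lambda\in\sigma_{\mathrm{r}}(T_X)$, as required.

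The one point that requires care, rather than being a genuine obstacle, is verifying that the identity $\sigma_{\mathrm{r}}(T_X)=\sigma_{\mathrm{pt}}(T_{X'})$ is available: this rests on $\sigma_{\mathrm{pt}}(T_X)=\emptyset$, which is why the preliminary reduction to case (b) of Proposition \ref{p7} is essential. Once that identity is in hand the argument is purely formal, inheriting the balanced structure from the point spectrum of the associate space; there is no new estimate to perform.
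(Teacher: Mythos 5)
Your proof is correct and follows essentially the same route as the paper: reduce to the case $\sigma_{\mathrm{r}}(T_X)\neq\emptyset$ (equivalently $0\in\sigma_{\mathrm{r}}(T_X)$, via Proposition \ref{p7}), deduce $\sigma_{\mathrm{pt}}(T_X)=\emptyset$ from Proposition \ref{p7}(a), invoke Remark \ref{r2} together with $(T_X)^*=-T_{X'}$ and the symmetry of the point spectrum to get $\sigma_{\mathrm{r}}(T_X)=\sigma_{\mathrm{pt}}(T_{X'})$, and conclude by applying Proposition \ref{p1}(b) to $X'$. The only differences are cosmetic (your three-case split versus the paper's dichotomy on whether $0\in\sigma_{\mathrm{r}}(T_X)$, and your explicit check that $X'$ has non-trivial Boyd indices).
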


\begin{proof}
Proposition \ref{p7}(b) implies that $\sigma_{\mathrm{r}}(T_{X})=\emptyset$ 
(which is an $\R$-balanced set) whenever
$0\not\in\sigma_{\mathrm{r}}(T_{X})$.
For the case when  $0\in\sigma_{\mathrm{r}}(T_{X})$,
Proposition \ref{p7}(a) implies that  $\sigma_{\mathrm{pt}}(T_{X})=\emptyset$.
Then Remark \ref{r2}  implies that $\sigma_{\mathrm{r}}(T_{X})=\sigma_{\mathrm{pt}}(T_{X'})$, which
is an $\R$-balanced set, as established in Proposition \ref{p1}(b).
\end{proof}


\begin{corollary}\label{c2}
Let $X$ be a   r.i.\ space  on $(-1,1)$  such that   $0<\underline{\alpha}_X\le \overline{\alpha}_X<1$. 
Then $[-1,1]\subseteq \sigma(T_X)$. 
\end{corollary}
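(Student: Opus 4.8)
The plan is to prove the containment directly, \emph{without} invoking duality or separability (as used in Proposition \ref{p7}), by a dichotomy on whether the function $1/w$, with $w(x):=(1-x^2)^{1/2}$, belongs to $X$. Recall that $1/w$ is equimeasurable with $|x|^{-1/2}$, so $1/w\in X$ if and only if $L^{2,\infty}\subseteq X$. Since $\sigma(T_X)$ is closed, it suffices in either case to show that $(-1,1)\subseteq\sigma(T_X)$, because then $\sigma(T_X)$ contains the closure $\overline{(-1,1)}=[-1,1]$. \textbf{Case A: $1/w\in X$.} For $\lambda\in(-1,1)$ the number $z(\lambda)$ in \eqref{z} is purely imaginary, so by \eqref{mod-xi} we have $|\xi_\lambda|=1/w\in X$; hence $\xi_\lambda\in X$ and, by Theorem \ref{t2}, $\lambda\in\sigma_{\mathrm{pt}}(T_X)$. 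Thus $(-1,1)\subseteq\sigma_{\mathrm{pt}}(T_X)\subseteq\sigma(T_X)$ and this case is finished immediately.

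\textbf{Case B: $1/w\notin X$.} Here $T_X$ is injective, and the plan is to show instead that $\lambda I-T_X$ fails to be surjective for every $\lambda\in(-1,1)$. Fixing such a $\lambda$, I would follow the proof of Proposition \ref{p6}: set $g_\lambda:=w\,\xi_\lambda$, so that $|g_\lambda|\equiv1$ and therefore $g_\lambda\in L^\infty\subseteq X$. By Lemma \ref{l1} one has $X\subseteq L^{p}$ for some $p>1$, and since $(-1,1)$ has finite measure this yields $X\subseteq L^{p_0}$ for some $1<p_0<2$ (take $p_0=\min\{p,2\}$ shrunk slightly); the choice $p_0<2$ is what makes $\xi_\lambda\in L^{p_0}$ and $\mathrm{Ker}(\lambda I-T_{p_0})=\langle\xi_\lambda\rangle$. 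Assuming for contradiction that some $h\in X$ satisfies $(\lambda I-T_X)(h)=g_\lambda$, we get $h\in L^{p_0}$ and $(\lambda I-T_{p_0})(h)=g_\lambda$, so J\"orgens' pseudo-inverse together with \eqref{eq-1} and \eqref{eq-2} forces
\[
h=\frac{\lambda}{\lambda^2-1}\,g_\lambda+\Big(\frac{ix}{\lambda^2-1}+C\Big)\xi_\lambda
\]
for some constant $C\in\C$.

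The crux — and the step I expect to be the main obstacle — is to show that the remainder $\big(\tfrac{ix}{\lambda^2-1}+C\big)\xi_\lambda$ cannot belong to $X$ for \emph{any} value of $C$; granting this, the fact that $h\in X$ and $g_\lambda\in X$ would put the remainder in $X$, a contradiction. To establish it, I would observe that the affine coefficient $b(x):=\tfrac{ix}{\lambda^2-1}+C$ satisfies $b(1)-b(-1)=\tfrac{2i}{\lambda^2-1}\neq0$ (as $\lambda\in(-1,1)$), so $b$ is nonzero at at least one endpoint, say $b(x_0)\neq0$ with $x_0\in\{\pm1\}$. Using $|\xi_\lambda|=1/w$ and continuity, on a one-sided neighbourhood $J$ of $x_0$ we get $|b\,\xi_\lambda|\ge(|b(x_0)|/2)\,(1/w)$. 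Since $1/w$ is even with identical singular behaviour at $\pm1$, the restriction $(1/w)\chi_J$ has the same decreasing rearrangement as $1/w$ up to a constant factor and a bounded term, so $(1/w)\chi_J\in X$ if and only if $1/w\in X$. Hence the ideal property of the r.i.\ space $X$ would force $1/w\in X$, contradicting the hypothesis of Case B. Therefore no such $h$ exists, $\lambda\in\sigma(T_X)$ for every $\lambda\in(-1,1)$, and taking closures gives $[-1,1]\subseteq\sigma(T_X)$, completing the argument.
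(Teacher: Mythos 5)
Your proof is correct and follows essentially the same route as the paper: the identical dichotomy on whether $1/w\in X$ (equivalently $L^{2,\infty}\subseteq X$), with Case A handled via the eigenfunctions $\xi_\lambda$ exactly as in Proposition \ref{p7}(a) and Remark \ref{r3}(b), and Case B via the non-surjectivity argument taken from the proofs of Propositions \ref{p6} and \ref{p7}(c). The only difference is that you unfold those citations and make explicit the endpoint argument showing $\big(\tfrac{ix}{\lambda^2-1}+C\big)\xi_\lambda\notin X$ for every $C\in\C$, a step the paper leaves implicit.
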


\begin{proof}
If $L^{2,\infty}\subseteq X$, then $(-1,1)\subseteq 
\sigma_{\mathrm{pt}}(T_{X})\subseteq \sigma(T_{X})$ by Remark \ref{r3}(b).
On the other hand, if $L^{2,\infty}\not\subseteq X$ it was established in the
proof of Proposition \ref{p7}(c) that $(\lambda I-T_X)$ is not surjective for every $\lambda\in(-1,1)$, that is, again $(-1,1)\subseteq \sigma(T_{X})$. 
Since $\sigma(T_{X})$ is closed, it follows, 
for each of the two cases, that $[-1,1]\subseteq\sigma(T_{X})$. 

\end{proof}


The following result of Widom exhibits a rather interesting
behavior of the finite Hilbert transform, which also has  consequences 
for the study of the spectrum of $T_X$. Its proof appears 
in a discussion concerning the continuous spectrum of $T_p$; see
p.~152 of \cite{widom}.

\begin{lemma}[Widom]\label{l3}
Let $f\in \bigcup_{1<p<\infty} L^p$. Then $T(f)+ f\not=0$ a.e.\ 
and $T(f)- f\not=0$ a.e.
\end{lemma}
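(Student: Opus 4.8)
The plan is to identify $T(f)+f$ and $T(f)-f$ with the two nontangential boundary values of the Cauchy transform of $f$, and then to invoke boundary uniqueness for functions of bounded characteristic. The assertion is vacuous when $f=0$, so I assume $f\neq0$ and set
\[
\Phi(z):=\frac{1}{2\pi i}\int_{-1}^1\frac{f(x)}{x-z}\,dx,\qquad z\in\C\setminus[-1,1],
\]
which is holomorphic on the connected domain $\C\setminus[-1,1]$ and tends to $0$ as $z\to\infty$. Writing $\C_+:=\{z:\Im z>0\}$ and $\C_-:=\{z:\Im z<0\}$, and letting $\Phi^\pm$ denote the boundary limits of $\Phi$ from $\C_\pm$, the Sokhotski--Plemelj formulas give, for a.e.\ $t\in(-1,1)$,
\[
\Phi^+(t)+\Phi^-(t)=T(f)(t),\qquad \Phi^+(t)-\Phi^-(t)=f(t),
\]
the first because $\tfrac{1}{\pi i}\,\mathrm{p.v.}\!\int_{-1}^1 f(x)/(x-t)\,dx=T(f)(t)$ by \eqref{T}, the second being the jump of the Cauchy transform across $(-1,1)$. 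Hence $T(f)+f=2\Phi^+$ and $T(f)-f=2\Phi^-$ a.e.\ on $(-1,1)$, and it suffices to prove that each of $\Phi^+$ and $\Phi^-$ vanishes only on a null subset of $(-1,1)$.

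Next I would pin down the function-theoretic class of $\Phi$. Since $f\in L^p\subseteq L^1(-1,1)$, the restriction of $\Phi$ to $\C_+$ is the Cauchy transform of an $L^1$-density, so by the classical bounds for such transforms it lies in $H^q(\C_+)$ for every $0<q<1$; in particular it is of bounded characteristic (Nevanlinna class), and the same holds for its restriction to $\C_-$. I also need $\Phi\not\equiv0$: were $\Phi$ identically zero on $\C_+$, analytic continuation over the connected set $\C\setminus[-1,1]$ (through $(-\infty,-1)\cup(1,\infty)$) would force $\Phi\equiv0$ everywhere, whence the jump $\Phi^+-\Phi^-=f$ would be $0$, contradicting $f\neq0$; the same argument rules out $\Phi\equiv0$ on $\C_-$.

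Finally, I would apply the boundary uniqueness theorem: a function of bounded characteristic on a half-plane that is not identically zero has nontangential boundary values which are nonzero almost everywhere (its boundary modulus cannot vanish on a set of positive measure, as $\log|\Phi^+|$ is integrable against the Poisson measure). Applied to $\Phi$ on $\C_+$ this yields $\Phi^+\neq0$ a.e.\ on $\R$, hence a.e.\ on $(-1,1)$, so $T(f)+f=2\Phi^+\neq0$ a.e. For the lower half-plane I would reflect, setting $\psi(z):=\overline{\Phi(\bar z)}$, which is again a nonzero function of bounded characteristic on $\C_+$ with boundary values $\overline{\Phi^-}$; the same conclusion gives $\Phi^-\neq0$ a.e., i.e.\ $T(f)-f=2\Phi^-\neq0$ a.e. The main obstacle is the second step: confirming that the Cauchy transform of a merely $L^1$ density genuinely belongs to a Hardy/Nevanlinna class to which boundary uniqueness applies. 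Once that membership is secured the rest is routine, and both assertions follow by the symmetric treatment of $\C_+$ and $\C_-$.
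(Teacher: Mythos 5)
Your proof is correct and follows essentially the same route as the proof this paper relies on: the paper does not prove Lemma~\ref{l3} itself but refers to p.~152 of Widom's article, where precisely your identification $T(f)\pm f=2\Phi^{\pm}$ via the Sokhotski--Plemelj formulas is combined with Privalov-type boundary uniqueness and the jump relation to force $f=0$. The step you flag as the main obstacle is standard and, since $f\in L^p$ with $p>1$, even easier than in the general $L^1$ case: writing $\Phi(\cdot+iy)=\tfrac12\bigl(P_y*f-iQ_y*f\bigr)$ with the Poisson and conjugate Poisson kernels, M.~Riesz's theorem gives $\sup_{y>0}\|\Phi(\cdot+iy)\|_{L^p(\R)}<\infty$, so $\Phi\in H^p(\C_+)$, which is ample for boundary uniqueness; alternatively, the Luzin--Privalov theorem needs no Nevanlinna-class hypothesis at all, only the a.e.\ existence of nontangential limits, which you already have from the Plemelj step.
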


\begin{corollary}\label{c3}
Let $X$ be any   r.i.\ space  on $(-1,1)$  such that   $0<\underline{\alpha}_X\le \overline{\alpha}_X<1$. 
Then $\pm1\in \sigma(T_X)$. In the event that $X$ is  separable, we have 
$\pm1\in \sigma_{\mathrm{c}}(T_X)$.
\end{corollary}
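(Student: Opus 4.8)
The plan is to prove $\pm 1 \in \sigma(T_X)$ by showing that $\lambda I - T_X$ fails to be surjective for $\lambda = \pm 1$, using Widom's Lemma \ref{l3} to produce a function outside the range. The key observation is that $\lambda I - T = \pm I - T$ being surjective would require, for a suitable target function $g$, the existence of $h \in X \subseteq \bigcup_{1<p<\infty} L^p$ with $(\pm I - T)(h) = g$, i.e. $T(h) \mp h = -g$; but Lemma \ref{l3} asserts $T(h) \pm h \neq 0$ a.e., which constrains the possible values of $T(h) \mp h$. First I would fix $\lambda = 1$ and attempt to hit a target $g$ which vanishes on a set of positive measure (or equals a specific function forcing a contradiction with the pointwise inequality $T(h) - h \neq 0$ a.e.). Taking $g = \mathbf 1$ say, if $(I - T_X)(h) = \mathbf 1$ then $T(h) - h = -\mathbf 1 \neq 0$ a.e., which is consistent, so the naive choice is too weak; instead I would exploit that the equation $T(h) - h = \psi$ can be solved only when $\psi$ lies in a restricted range dictated by the Tricomi/airfoil inversion theory, and choose $\psi$ (equivalently $g$) so that any formal solution $h$ necessarily behaves like a multiple of $1/w$ near the endpoints, forcing $h \notin X$.

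The cleaner route, which I would pursue, mirrors the proof of Proposition \ref{p6}: since $X \subseteq L^p$ for some $1 < p < 2$ (by Lemma \ref{l1}), the operator $\lambda I - T_p$ on $L^p$ extends $\lambda I - T_X$, and at the endpoints $\lambda = \pm 1$ the operator $\lambda I - T_p$ is precisely the degenerate (non-invertible) case of the airfoil equation. The function $z(\lambda)$ in \eqref{z} degenerates as $\lambda \to \pm 1$ (the argument of $(1+\lambda)/(1-\lambda)$ tends to $0$ or $\pi$), so the canonical eigenfunction framework breaks down and J\"orgens' pseudo-inverse $\widehat R(\lambda, T_p)$ is no longer available. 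I would instead invoke Lemma \ref{l3} directly: if $(I - T_X)(h) = g$ for some $g \in X$ with $g \neq 0$ a.e. and $h \in X$, then $T(h) - h = -g \neq 0$ a.e., which is allowed, so I must select $g$ that changes the a.e. structure. The decisive choice is $g$ such that $g = 0$ on a set of positive measure while $g \in X$ is nonzero; then $(I - T_X)(h) = g$ forces $T(h) - h = 0$ on a set of positive measure, contradicting Lemma \ref{l3}. Such a $g \in X$ is readily available (e.g. $g = \chi_A$ for a subinterval $A \subsetneq (-1,1)$, which lies in $L^\infty \subseteq X$).

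Thus the core argument is: pick $g = \chi_A$ with $A$ a proper subinterval, so $g \in X$ but $g$ vanishes on the positive-measure set $(-1,1) \setminus A$; if $1 \in \rho(T_X)$ then there is $h \in X$ with $(I - T_X)(h) = g$, whence $T(h) - h = -g = 0$ a.e.\ on $(-1,1)\setminus A$, contradicting Widom's Lemma \ref{l3} which gives $T(h) - h \neq 0$ a.e.\ on all of $(-1,1)$. Hence $1 \in \sigma(T_X)$; the identical argument with $\mathbf 1 + T_X$ and the inequality $T(h) + h \neq 0$ a.e.\ gives $-1 \in \sigma(T_X)$. For the separable case, I would upgrade membership in $\sigma(T_X)$ to membership in $\sigma_{\mathrm c}(T_X)$ by ruling out the point and residual parts: since $\pm 1 \notin \mathcal A$ (as $\mathcal A = \C \setminus \{(-\infty,-1]\cup[1,\infty)\}$ by \eqref{AA}), the eigenfunctions $\xi_{\pm 1}$ do not exist, so $\pm 1 \notin \sigma_{\mathrm{pt}}(T_X)$; applying this to $X'$ (also separable with nontrivial Boyd indices) gives $\pm 1 \notin \sigma_{\mathrm{pt}}(T_{X'}) = \sigma_{\mathrm{pt}}((T_X)^*)$, and then Remark \ref{r2} forces $\sigma_{\mathrm r}(T_X) \subseteq \sigma_{\mathrm{pt}}((T_X)^*)$ to exclude $\pm 1$ as well, leaving $\pm 1 \in \sigma_{\mathrm c}(T_X)$.

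The main obstacle I anticipate is verifying that the target $g = \chi_A$ really forces $T(h) - h$ to vanish on a positive-measure set and that this is genuinely incompatible with Lemma \ref{l3}; one must be careful that $h \in X \subseteq \bigcup_{1<p<\infty} L^p$ so that Lemma \ref{l3} applies to $h$, and that the equation $(I - T_X)(h) = g$ indeed reads pointwise a.e.\ as $T(h) - h = -g$. The second delicate point is the promotion to $\sigma_{\mathrm c}$: ensuring $\pm 1 \notin \sigma_{\mathrm{pt}}(T_X)$ requires knowing the full eigenfunction classification (Theorem \ref{t2} and $\mathcal A = \C \setminus \{(-\infty,-1]\cup[1,\infty)\}$), and the residual-spectrum exclusion relies on the self-duality $(T_X)^* = -T_{X'}$ together with Remark \ref{r2}, both of which are available in the separable setting.
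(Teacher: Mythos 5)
Your proof is correct, and for the first claim ($\pm 1 \in \sigma(T_X)$) it is essentially the paper's own argument: the paper likewise deduces non-surjectivity of $(\pm I)-T$ from Widom's Lemma \ref{l3}, and your choice $g=\chi_A$ is exactly the detail the paper leaves implicit (no function in the range of $(\pm I)-T$ can vanish on a set of positive measure, so $\chi_A\in L^\infty\subseteq X$ witnesses non-surjectivity). For the separable case, however, you take a genuinely different route. The paper argues by density: it chooses $p$ with $L^p\subseteq X$ (Lemma \ref{l1}), invokes $\pm 1\in\sigma_{\mathrm{c}}(T_p)$ from Widom's Theorem \ref{t1} to get that $(\pm I-T)(L^p)$ is dense in $L^p$, and then uses the density of $L^p$ in the separable space $X$ to conclude that $(\pm I-T)(X)$ is dense in $X$, so $\pm1\notin\sigma_{\mathrm{r}}(T_X)$; combined with $\pm1\notin\sigma_{\mathrm{pt}}(T_X)$ (Proposition \ref{p2}) this yields $\pm1\in\sigma_{\mathrm{c}}(T_X)$. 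You instead exclude the residual part by duality: $\sigma_{\mathrm{r}}(T_X)\subseteq\sigma_{\mathrm{pt}}((T_X)^*)=\sigma_{\mathrm{pt}}(-T_{X'})=\sigma_{\mathrm{pt}}(T_{X'})$ via Remark \ref{r2} and the symmetry of the point spectrum, and $\pm1$ is never an eigenvalue of $T_{X'}$ because $\pm1\notin\mathcal{A}$. Both routes genuinely use separability (the paper through density of $L^p$ in $X$, you through $X^*=X'$ and $(T_X)^*=-T_{X'}$). Your version has the small advantage of not needing Widom's fine-spectrum identification $\pm1\in\sigma_{\mathrm{c}}(T_p)$, only the eigenvalue classification; the paper's version avoids passing to the associate space altogether. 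One minor correction: your parenthetical claim that $X'$ is ``also separable'' is false in general (e.g.\ $(L^{2,1})'=L^{2,\infty}$ is non-separable), but this is harmless, since your argument only uses that $X'$ has non-trivial Boyd indices, which does hold.
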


\begin{proof}
Lemma \ref{l3} shows that the operators $(\pm I)-T$ are not surjective on every r.i.\ space $X$
with non-trivial Boyd indices and so $\pm1\in\sigma(T_X)$. 
Moreover, Proposition \ref{p2} shows that $\pm1\notin\sigma_{\mathrm{pt}}(T_X)$.
Accordingly, 
$\pm1\in\sigma(T_X)\setminus \sigma_{\mathrm{pt}}(T_X)=
\sigma_{\mathrm{c}}(T_X)\cup \sigma_{\mathrm{r}}(T_X)$.

Let $X$ be separable. Choose $1<p<\infty$  such that $L^p\subseteq X$. 
Then  $((\pm) I-T)(L^p)\subseteq ((\pm) I-T)(X)$. Since $\pm1\in\sigma_{\mathrm{c}}(T_p)$,
because of Theorem \ref{t1}, it  follows that $((\pm) I-T)(L^p)$ is dense in $L^p$. Hence, since 
$L^p$ is also dense in 
$X$, we can conclude that $((\pm) I-T)(X)$  is dense in $X$. Thus, $\pm1\in \sigma_{\mathrm{c}}(T_X)$.
\end{proof}


\section{The residual spectrum of $T_X$}
\label{S6}


Proposition \ref{p7} suggests a strategy to study the spectrum of $T_X$.
We know from Proposition \ref{p7}  that  always $0\in\sigma(T_X)$.
The key point is  to decide in  which part of the spectrum 0 lies.
The case when $0\in\sigma_{\mathrm{pt}}(T_X)$ is considered in Proposition \ref{p8},
when $0\in\sigma_{\mathrm{r}}(T_X)$ is considered in Proposition \ref{p9}, and
when $0\in\sigma_{\mathrm{c}}(T_X)$ is considered in Proposition \ref{p10}.  
The conditions defining each one of the three cases are structured 
according to the indices $p_X$ and $q_X$ (with the aid of Proposition \ref{p7}).


Concerning the fine spectra we need a further index associated to a 
r.i.\ space $X$ on $(-1,1)$ with $0<\underline{\alpha}_X\le \overline{\alpha}_X<1$.
Define $q_X\in(1,\infty)$ by
\begin{equation}\label{qx}
q_X:=\sup\Big\{q\in(1,\infty): X\subseteq L^{q,1}\Big\}.
\end{equation}
Choose $1<r<\infty$ such that $L^r\subseteq X$ (cf.\ Lemma \ref{l1}). Then
$r=q_{L^r}\ge q_X$ shows that necessarily $q_X<\infty$.
The index  $q_X$ can be  attained or not, depending on the space $X$, 
a fact which will be relevant for the study of the spectrum of $T$.
Note that
\begin{equation}\label{qx2}
q_X=\sup\Big\{q\in(1,\infty): X\subseteq L^{q}\Big\}.
\end{equation}
However, the right-side of \eqref{qx2}, while  giving the value $q_X$, may be attained or not
attained in different spaces than the original definition given in \eqref{qx}.
Moreover,
$$
q_X\le p_X.
$$ 
Observe that there is no r.i.\ space $X$ on $(-1,1)$ with non-trivial Boyd indices
for which $p_X=q_X$ with both $p_X$ and $q_X$ being attained.


\begin{lemma}\label{l4} Let $X$ be any r.i.\ on $(-1,1)$ with the
indices $p_X$ and $q_X$   as given in \eqref{px} and \eqref{qx}, respectively.
\begin{itemize}
\item[(a)] The following inequalities hold:
\begin{equation*}\label{beta}
0<\underline{\alpha}_X\le \underline{\beta}_X\le
1/p_X\le 1/q_X\le 
\overline{\beta}_X\le \overline{\alpha}_X<1.
\end{equation*}
\item[(b)] For the associate space $X'$  of $X$, we have that $p_{X'}=(q_X)'$, that is,
$$
(1/p_{X'})+(1/q_X)=1.
$$ 
Moreover, $p_{X'}$ is attained if and only if $q_X$ is attained.
\end{itemize}
\end{lemma}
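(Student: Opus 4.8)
The plan is to reduce the entire lemma to a single analytic inequality, namely $\underline{\alpha}_X$-free estimate $\underline{\beta}_X \le 1/p_X$, and to recover everything else by associate-space duality. The outer inequalities $\underline{\alpha}_X \le \underline{\beta}_X$ and $\overline{\beta}_X \le \overline{\alpha}_X$ of (a) are already recorded among the index relations of Section~\ref{S2}, the inequality $1/p_X \le 1/q_X$ is just $q_X \le p_X$ noted after \eqref{qx2}, and the bounds $0 < \underline{\alpha}_X$, $\overline{\alpha}_X < 1$ are the standing (implicit) hypothesis of non-trivial Boyd indices needed for $p_X,q_X$ to be defined at all. Thus the only genuinely new content of (a) is the pair $\underline{\beta}_X \le 1/p_X$ and $1/q_X \le \overline{\beta}_X$, and I would prove only the first directly and then deduce the second from it together with part (b).

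First I would dispose of (b), which is pure bookkeeping with associate spaces. Using the standard identity $(L^{q,1})' = L^{q',\infty}$ together with the Fatou property (which gives $X''=X$), the condition $X \subseteq L^{q,1}$ is equivalent to $L^{q',\infty} \subseteq X'$. Writing $p=q'$ and using that $q\mapsto q'$ is an order-reversing involution of $(1,\infty)$, I obtain
\[
q_X = \sup\{\,q : X \subseteq L^{q,1}\,\} = \sup\{\,p' : L^{p,\infty}\subseteq X'\,\} = \big(\inf\{\,p : L^{p,\infty}\subseteq X'\,\}\big)' = (p_{X'})',
\]
which is precisely $p_{X'}=(q_X)'$. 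The same equivalence shows that $q_X$ is attained (i.e.\ $X\subseteq L^{q_X,1}$) exactly when $L^{p_{X'},\infty}\subseteq X'$, that is, when $p_{X'}$ is attained.

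The analytic core is $\underline{\beta}_X \le 1/p_X$, equivalently $p_X \le 1/\underline{\beta}_X$ (note $\underline{\beta}_X>0$ since $\underline{\alpha}_X>0$). As $p_X=\inf\{p : L^{p,\infty}\subseteq X\}$, it suffices to show $L^{p,\infty}\subseteq X$ whenever $1/p<\underline{\beta}_X$, and then let $1/p \uparrow \underline{\beta}_X$. Because $L^{p,\infty}$ is the smallest r.i.\ space containing $|x|^{-1/p}$, this reduces to placing $t^{-1/p}$ in the Luxemburg representation $\widetilde X$ on $(0,2)$. Here I would decompose dyadically, $t^{-1/p}\chi_{(0,1]} \le \sum_{k\ge0} 2^{(k+1)/p}\chi_{(2^{-k-1},2^{-k}]}$, so that by the lattice property and $\|\chi_E\|_{\widetilde X}=\varphi_X(|E|)$,
\[
\big\| t^{-1/p}\chi_{(0,1]} \big\|_{\widetilde X} \le \sum_{k\ge0} 2^{(k+1)/p}\,\varphi_X(2^{-k-1}).
\]
The key estimate is that, since $M_{\varphi_X}$ is submultiplicative, $\lim_{s\to0^+}\log M_{\varphi_X}(s)/\log s = \underline{\beta}_X$, whence for every $\varepsilon>0$ one has $M_{\varphi_X}(s)\le s^{\underline{\beta}_X-\varepsilon}$ and therefore $\varphi_X(s)\le \varphi_X(1)M_{\varphi_X}(s)\le \varphi_X(1)\,s^{\underline{\beta}_X-\varepsilon}$ for all sufficiently small $s$. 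Substituting this makes the tail behave like $\sum_k 2^{(k+1)(1/p-\underline{\beta}_X+\varepsilon)}$, which converges once $\varepsilon$ is chosen with $1/p<\underline{\beta}_X-\varepsilon$; the part on $(1,2]$ is bounded and harmless. Hence $t^{-1/p}\in\widetilde X$ and $L^{p,\infty}\subseteq X$. I expect this dyadic summation — and in particular extracting the \emph{upper} bound $M_{\varphi_X}(s)\le s^{\underline{\beta}_X-\varepsilon}$ for small $s$ from the index, whose defining supremum only gives the reverse pointwise inequality — to be the one delicate point of the whole argument.

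Finally I would obtain $1/q_X \le \overline{\beta}_X$ by applying the inequality just proved to $X'$ in place of $X$, giving $\underline{\beta}_{X'}\le 1/p_{X'}$. Invoking the duality of fundamental indices $\underline{\beta}_{X'}=1-\overline{\beta}_X$ (which follows from $\varphi_X(t)\varphi_{X'}(t)=t$) together with $1/p_{X'}=1/(q_X)'=1-1/q_X$ from part (b) turns this into $1-\overline{\beta}_X \le 1-1/q_X$, i.e.\ $1/q_X \le \overline{\beta}_X$. Assembling the four middle inequalities with the two outer ones from Section~\ref{S2} yields the full chain in (a).
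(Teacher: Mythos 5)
Your proposal is correct, but it reorganizes the paper's argument in a way worth spelling out. Part (b) coincides with the paper's own proof (the same equivalence $L^{p,\infty}\subseteq X'$ if and only if $X=X''\subseteq L^{p',1}$, via $(L^{q,1})'=L^{q',\infty}$ and the Fatou property). For $\underline{\beta}_X\le 1/p_X$ the paper extracts exactly your key estimate $\varphi_X(t)\le C\,t^{\underline{\beta}_X-\varepsilon}$ for small $t$, but then concludes abstractly: this bound yields $L^{1/(\underline{\beta}_X-\varepsilon),1}\subseteq\Lambda(X)\subseteq X$, hence $L^p\subseteq X$ for all $p>1/\underline{\beta}_X$, and \eqref{px2} finishes; your dyadic summation in the Luxemburg representation is a self-contained, elementary proof of essentially that same embedding, applied to the single function $t^{-1/p}$, and it lands directly on \eqref{px} (giving, in fact, the slightly stronger weak-type inclusion $L^{p,\infty}\subseteq X$). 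The genuine divergence is at $1/q_X\le\overline{\beta}_X$: the paper runs a second, symmetric, direct computation --- from $\varphi_X(u)>u^{\overline{\beta}_X+\varepsilon}$ for small $u$ it deduces $X\subseteq\mathcal{M}_{\varphi_X}\subseteq L^{1/(\overline{\beta}_X+\varepsilon),\infty}$, hence $X\subseteq L^{q,1}$ for all $q<1/\overline{\beta}_X$, and \eqref{qx} finishes --- whereas you dualize your one inequality using part (b) together with the fundamental-index duality $\underline{\beta}_{X'}=1-\overline{\beta}_X$. That duality is standard (it follows from $\varphi_X(t)\varphi_{X'}(t)=t$, whence $M_{\varphi_{X'}}(t)=t\,M_{\varphi_X}(1/t)$ up to the usual domain caveats on a finite measure space), but it is nowhere stated in the paper, so your route trades the paper's second computation for one imported lemma of the same flavor; what it buys is economy and the structural insight that a single analytic estimate suffices. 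Finally, the point you flag as delicate --- that the defining supremum of $\underline{\beta}_X$ is in fact a limit as $t\to0^+$, by submultiplicativity of $M_{\varphi_X}$, which upgrades the trivial pointwise bound $M_{\varphi_X}(t)\ge t^{\underline{\beta}_X}$ to the needed upper bound $M_{\varphi_X}(t)\le t^{\underline{\beta}_X-\varepsilon}$ --- is used silently by the paper as well: its opening assertion that $\underline{\beta}_X-\varepsilon<\log M_{\varphi_X}(t)/\log t$ for \emph{all} sufficiently small $t$ is precisely this fact, so you were right to isolate it as the crux.
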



\begin{proof}
(a) Fix $\varepsilon>0$. Then  there exists $0<t_\varepsilon<1$ such that  
$$
\underline{\beta}_X-\varepsilon<\frac{\log M_{\varphi_X}(t)}{\log t}\le \underline{\beta}_X,\quad
0<t<t_\varepsilon,
$$
which implies that
$$
t^{\underline{\beta}_X-\varepsilon}> \sup_{0<s\le1}\frac{\varphi_X(st)}{\varphi_X(s)}
\ge t^{\underline{\beta}_X},\quad 0<t<t_\varepsilon.
$$
Setting  $s=1$ yields $t^{\underline{\beta}_X-\varepsilon}>\varphi_X(t)$ for $0<t<t_\varepsilon$,
which implies that
$$
L^{\frac{1}{\underline{\beta}_X-\varepsilon},1}\subseteq \Lambda(X)\subseteq X.
$$
So, for all $p>1/(\underline{\beta}_X-\varepsilon)$ we have that $L^p\subseteq X$. Since
$\varepsilon>0$ is arbitrary, it follows that $L^p\subseteq X$, for all $p>1/\underline{\beta}_X$. Hence, 
$ \underline{\beta}_X\le 1/p_X$; see \eqref{px2}.

Again fix $\varepsilon>0$. Now choose $t_\varepsilon>1$ such that
$$
\overline{\beta}_X\le \frac{\log M_{\varphi_X}(t)}{\log t}< \overline{\beta}_X+\varepsilon,\quad
t>t_\varepsilon,
$$
which implies that
$$
t^{\overline{\beta}_X}\le \sup_{0<s\le(1/t)}\frac{\varphi_X(st)}{\varphi_X(s)}
< t^{\overline{\beta}_X+\varepsilon},\quad t>t_\varepsilon.
$$
Selecting $s=1/t$ yields $1/\varphi_X(1/t)<t^{\overline{\beta}_X+\varepsilon}$ for $t>t_\varepsilon$.
Setting $u=1/t$ gives  
$u^{\overline{\beta}_X+\varepsilon}<\varphi_X(u)$ for $0<u<u_\varepsilon=1/t_\varepsilon$.
This implies that
$$
X\subseteq {\mathcal M}_{\varphi_X}\subseteq L^{\frac{1}{\overline{\beta}_X+\varepsilon},\infty},
$$
where ${\mathcal M}_{\varphi_X}$ denotes the largest r.i.\ space with fundamental function 
$\varphi_X$ (see  \cite[Definition II.5.7 and Proposition II.5.9]{bennett-sharpley}).
So, for all $q<1/(\overline{\beta}_X+\varepsilon)$ we have $X\subseteq L^{q,1}$. 
Thus, 
for all $q<1/\overline{\beta}_X$, it follows that $X\subseteq L^{q,1}$. Hence, 
$1/q_X\le \overline{\beta}_X$; see the definition of $q_X$ in \eqref{qx}.

The inequalities $\underline{\alpha}_X\le \underline{\beta}_X$ and  
$\overline{\beta}_X\le \overline{\alpha}_X$ are known, \cite[p.178]{bennett-sharpley}.

(b) Note that $p\in(1,\infty)$ satisfies $L^{p,\infty}\subseteq X'$ if and only if $q:=p'$ satisfies
$X=X''\subseteq L^{q,1}$.
So, if $p_{X'}\le p$, then $(p_{X'})'\ge p'$, from which $q_{X}\ge (p_{X'})'$  follows. Reciprocally,
if $q_X\ge q$, then $(q_X)'\le q'$, from which $(q_X)'\ge p_{X'}$ follows and then 
$(q_X)'\ge p_{X'}$ follows, that is, $q_X\le (p_{X'})'$.
\end{proof}


The following result treats the case when $0\in\sigma_{\mathrm{pt}}(T_X)$.
Proposition \ref{p7}(a) allows its formulation to be expressed  in terms of indices. Namely,
that $p_X<2$ or, that $p_X=2$ with $p_X$ attained.


\begin{proposition}\label{p8}
Let $X$ be a separable r.i.\ space on $(-1,1)$ such that 
$0<\underline{\alpha}_X\le \overline{\alpha}_X<1$. 
\begin{itemize}
\item [(a)] Suppose that $p_X<2$ and $p_X$ is not attained. Then 
$$
\sigma_{\mathrm{pt}}(T_X) =\mathrm{int}(\mathcal{R}_{p_X});\;
\sigma_{\mathrm{r}}(T_X)=\emptyset.
$$
\item [(b)]   Suppose that $p_X\le2$ and $p_X$ is attained. Then 
$$
\sigma_{\mathrm{pt}}(T_X)=\mathcal{R}_{p_X}\setminus\{\pm1\};\;
\sigma_{\mathrm{r}}(T_X)=\emptyset.
$$
\end{itemize}%
\end{proposition}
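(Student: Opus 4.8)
The plan is to note first that the point-spectrum identifications demand no new work at all: the hypotheses of (a) and (b) coincide exactly with cases (c) and (b) of Proposition \ref{p2}. Thus $\sigma_{\mathrm{pt}}(T_X)=\mathrm{int}(\mathcal{R}_{p_X})$ when $p_X<2$ is not attained, and $\sigma_{\mathrm{pt}}(T_X)=\mathcal{R}_{p_X}\setminus\{\pm1\}$ when $p_X\le2$ is attained. The entire content of the proposition therefore reduces to establishing $\sigma_{\mathrm{r}}(T_X)=\emptyset$ in both situations.

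For the residual spectrum I would proceed by duality, exactly as in the proof of Proposition \ref{p7}(b). Since $X$ is separable we have $(T_X)^*=-T_{X'}$, and combining the inclusion $\sigma_{\mathrm{r}}(T_X)\subseteq\sigma_{\mathrm{pt}}((T_X)^*)$ from Remark \ref{r2} with the symmetry of the point spectrum through $0$ from Proposition \ref{p1}(a) yields
\[
\sigma_{\mathrm{r}}(T_X)\subseteq\sigma_{\mathrm{pt}}(-T_{X'})=\sigma_{\mathrm{pt}}(T_{X'}).
\]
Hence it suffices to prove $\sigma_{\mathrm{pt}}(T_{X'})=\emptyset$. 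By Proposition \ref{p2}(a) applied to the r.i.\ space $X'$ (which also has non-trivial Boyd indices), this holds whenever $p_{X'}>2$ (attained or not) or $p_{X'}=2$ with $p_{X'}$ not attained.

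The next step is to translate these conditions on $p_{X'}$ into conditions on $q_X$ through Lemma \ref{l4}(b), which gives $p_{X'}=(q_X)'$ and records that $p_{X'}$ is attained precisely when $q_X$ is. Then $p_{X'}>2\iff q_X<2$ and $p_{X'}=2\iff q_X=2$. Invoking the inequality $q_X\le p_X$ from Lemma \ref{l4}(a), in case (a) one has $q_X\le p_X<2$, so $q_X<2$, giving $p_{X'}>2$ and hence $\sigma_{\mathrm{pt}}(T_{X'})=\emptyset$. In case (b) one has $q_X\le p_X\le2$, and the only delicate configuration is $q_X=2$, which forces $p_X=2$ as well.

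The main obstacle is precisely this boundary configuration $p_X=q_X=2$ with $p_X$ attained. Here I would appeal to the observation recorded just before Lemma \ref{l4}, namely that no r.i.\ space with non-trivial Boyd indices can satisfy $p_X=q_X$ with \emph{both} indices attained. Since $p_X$ is attained by hypothesis in case (b), the value $q_X=2$ cannot be attained; thus $p_{X'}=2$ is not attained and Proposition \ref{p2}(a) again applies. In all remaining situations $q_X<2$ delivers $p_{X'}>2$. This exhausts every case, yielding $\sigma_{\mathrm{pt}}(T_{X'})=\emptyset$ and therefore $\sigma_{\mathrm{r}}(T_X)=\emptyset$, as required.
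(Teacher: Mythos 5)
Your proposal is correct, but the route you take to $\sigma_{\mathrm{r}}(T_X)=\emptyset$ is genuinely different from the paper's. Both proofs dispose of the point spectrum identically, by citing parts (c) and (b) of Proposition \ref{p2}. For the residual spectrum, however, the paper argues in two lines: the identifications just obtained give $0\in\sigma_{\mathrm{pt}}(T_X)$, hence $0\notin\sigma_{\mathrm{r}}(T_X)$, and Proposition \ref{p7}(b) --- which for separable $X$ makes $\sigma_{\mathrm{r}}(T_X)\neq\emptyset$ equivalent to $0\in\sigma_{\mathrm{r}}(T_X)$ --- then forces $\sigma_{\mathrm{r}}(T_X)=\emptyset$. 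You bypass Proposition \ref{p7} altogether: you run the duality inclusion $\sigma_{\mathrm{r}}(T_X)\subseteq\sigma_{\mathrm{pt}}((T_X)^*)=\sigma_{\mathrm{pt}}(T_{X'})$ (Remark \ref{r2} plus the symmetry in Proposition \ref{p1}(a)) and then annihilate $\sigma_{\mathrm{pt}}(T_{X'})$ via Proposition \ref{p2}(a) applied to $X'$, translating the hypotheses through Lemma \ref{l4}: $p_{X'}=(q_X)'$, attainment of $p_{X'}$ equivalent to attainment of $q_X$, and $q_X\le p_X$. Your treatment of the one delicate configuration, $p_X=q_X=2$ with $p_X$ attained, is correct: the observation preceding Lemma \ref{l4} (no r.i.\ space has $p_X=q_X$ with both attained) forces $q_X$, and hence $p_{X'}$, to be not attained, so Proposition \ref{p2}(a) still applies. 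As for what each approach buys: the paper's is shorter, but it leans on Proposition \ref{p7}(b), whose proof in turn passes through the Lorentz-space fact $\sigma_{\mathrm{pt}}(T_{2,1})=\emptyset$ from Section \ref{S4}; yours is self-contained modulo Proposition \ref{p2}, Lemma \ref{l4}, Remark \ref{r2} and Proposition \ref{p1}, and it is precisely the duality-plus-index pattern that the paper itself deploys later to prove Proposition \ref{p9}, so in effect you have proved Proposition \ref{p8} by the method of Proposition \ref{p9}.
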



\begin{proof} 
The equalities in parts (a) and (b) regarding the point spectrum
are immediate from parts (c) and (b) of Proposition \ref{p2}, respectively.

Consider now the equalities regarding $\sigma_{\mathrm{r}}(T_X)$.  In each 
of the  cases (a) and (b) it follows from the respective identities 
for $\sigma_{\mathrm{pt}}(T_X)$ established in  the previous paragraph 
that  $0\in\sigma_{\mathrm{pt}}(T_X)$ and so
$0\notin\sigma_{\mathrm{r}}(T_X)$. Then Proposition \ref{p7}(b) implies, as  $X$ is separable, 
that $\sigma_{\mathrm{r}}(T_X)=\emptyset$ (for both (a) and  (b)).
\end{proof}


The next result considers the situation when $0\in\sigma_{\mathrm{r}}(T_X)$.
This time Proposition \ref{p7}(b) allows us to express this  in terms of indices. Namely,
that $q_X>2$ or, that $q_X=2$ with $q_X$  attained.


\begin{proposition}\label{p9}
Let $X$ be a separable r.i.\ space on $(-1,1)$ such that 
$0<\underline{\alpha}_X\le \overline{\alpha}_X<1$. 
\begin{itemize}
\item [(a)] Suppose that  $q_X>2$ and $q_X$ is not attained. Then 
$$
\sigma_{\mathrm{pt}}(T_X) =\emptyset;\;	
\sigma_{\mathrm{r}}(T_X)=\mathrm{int}(\mathcal{R}_{q_X}).
$$
\item [(b)]   Suppose that  $q_X\ge2$ and $q_X$ is attained. Then 
$$
\sigma_{\mathrm{pt}}(T_X)=\emptyset;\; 
\sigma_{\mathrm{r}}(T_X)=  \mathcal{R}_{q_X}\setminus\{\pm1\}.
$$
\end{itemize}
\end{proposition}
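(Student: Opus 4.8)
The plan is to reduce everything to the point spectrum of the dual operator, mirroring the structure of Proposition~\ref{p8} but now exploiting the associate-space index identity $p_{X'}=(q_X)'$ from Lemma~\ref{l4}(b). Throughout I would use that, since $X$ is separable, $X^*=X'$ and $(T_X)^*=-T_{X'}$, and that $X'$ again has non-trivial Boyd indices, so that $T_{X'}$ and its spectra are well defined.

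First I would settle $\sigma_{\mathrm{pt}}(T_X)=\emptyset$. Recall that $q_X\le p_X$ always holds. In case (a) the hypothesis $q_X>2$ forces $p_X\ge q_X>2$, so Proposition~\ref{p2}(a) gives $\sigma_{\mathrm{pt}}(T_X)=\emptyset$. In case (b) one has $p_X\ge q_X\ge2$; if $p_X>2$ then again Proposition~\ref{p2}(a) applies, while if $p_X=2$ then necessarily $q_X=2$ is attained, and since $p_X$ and $q_X$ cannot both be attained when equal, $p_X=2$ is not attained, which is precisely the remaining case of Proposition~\ref{p2}(a) yielding $\sigma_{\mathrm{pt}}(T_X)=\emptyset$.

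With the point spectrum empty, Remark~\ref{r2} gives the clean identity
$$
\sigma_{\mathrm{r}}(T_X)=\sigma_{\mathrm{pt}}((T_X)^*)=\sigma_{\mathrm{pt}}(-T_{X'})=\sigma_{\mathrm{pt}}(T_{X'}),
$$
where the last equality uses the reflection symmetry of the point spectrum from Proposition~\ref{p1}(a). It therefore remains only to compute $\sigma_{\mathrm{pt}}(T_{X'})$ by applying Proposition~\ref{p2} to the associate space $X'$. Here Lemma~\ref{l4}(b) is the key input: $p_{X'}=(q_X)'$, and $p_{X'}$ is attained exactly when $q_X$ is. In case (a), $q_X>2$ not attained gives $p_{X'}=(q_X)'<2$ not attained, so Proposition~\ref{p2}(c) yields $\sigma_{\mathrm{pt}}(T_{X'})=\mathrm{int}(\mathcal{R}_{(q_X)'})=\mathrm{int}(\mathcal{R}_{q_X})$ after invoking $\mathcal{R}_{p'}=\mathcal{R}_p$. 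In case (b), $q_X\ge2$ attained gives $p_{X'}=(q_X)'\le2$ attained, so Proposition~\ref{p2}(b) yields $\sigma_{\mathrm{pt}}(T_{X'})=\mathcal{R}_{(q_X)'}\setminus\{\pm1\}=\mathcal{R}_{q_X}\setminus\{\pm1\}$. Substituting these into the displayed identity produces the two claimed formulas for $\sigma_{\mathrm{r}}(T_X)$.

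The only genuinely delicate point is the bookkeeping of the attainment conditions: I must track that $q_X$ attained translates, via Lemma~\ref{l4}(b), into $p_{X'}$ attained, so that the correct branch of Proposition~\ref{p2} (part (b) versus part (c)) is selected; and the boundary subcase $p_X=q_X=2$ in (b) has to be dispatched using the impossibility of both indices being attained simultaneously. No new estimate is required—the entire argument is a duality-plus-index computation—so I expect it to be short once the index identities are invoked correctly.
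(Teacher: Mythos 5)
Your proof is correct and follows essentially the same route as the paper's: empty point spectrum via $q_X\le p_X$ (with the paper's same treatment of the boundary subcase $p_X=q_X=2$ through the impossibility of both indices being attained) and Proposition \ref{p2}(a), then $\sigma_{\mathrm{r}}(T_X)=\sigma_{\mathrm{pt}}((T_X)^*)=\sigma_{\mathrm{pt}}(T_{X'})$ via Remark \ref{r2} and the symmetry from Proposition \ref{p1}(a), then Lemma \ref{l4}(b) to identify the dual point spectrum. The only cosmetic difference is that you invoke Proposition \ref{p2}(b)/(c) directly on $X'$, whereas the paper cites Proposition \ref{p8} applied to $X'$ (which itself rests on Proposition \ref{p2}); your shortcut is if anything slightly cleaner, since Proposition \ref{p8} formally assumes separability, a property the associate space $X'$ need not possess.
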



\begin{proof}
(a) Since $p_X\ge q_X$ (cf. Lemma \ref{l4}(a)), it follows that
$p_X>2$ and hence, Proposition  \ref{p2}(a) implies that 
\begin{equation}\label{eq-10}
\sigma_{\mathrm{pt}}(T_X)=\emptyset.
\end{equation}
According to the definition of $q_X$ in \eqref{qx}, the condition $q_X>2$ implies that $X\subseteq L^{2,1}$.
Hence, by Proposition \ref{p7}(b), we have that $\sigma_{\mathrm{r}}(T_X)\not=\emptyset$. 
By \eqref{eq-10} and Remark \ref{r2}, together with the symmetry of the point spectrum, the identity
$$
\sigma_{\mathrm{r}}(T_X)=\sigma_{\mathrm{pt}}((T_X)^*)=
\sigma_{\mathrm{pt}}(-T_{X'})=\sigma_{\mathrm{pt}}(T_{X'})
$$
is valid. The assumption of (a), together with $p_{X'}=(q_X)'$ (cf.\
Lemma \ref{l4}(b)), imply that $p_{X'}<2$ with $p_{X'}$ not
attained. It then follows from Proposition \ref{p8}(a) applied
to $X'$ that
$$
\sigma_{\mathrm{r}}(T_{X})=\sigma_{\mathrm{pt}}(T_{X'})=
\mathrm{int}(\mathcal{R}_{p_{X'}})=\mathrm{int}(\mathcal{R}_{(q_{X})'})
=\mathrm{int}(\mathcal{R}_{q_{X}}).
$$

(b)  If $q_X>2$, then $p_X\ge q_X$ yields $p_X>2$. On the other
hand, if $q_X=2$, then $p_X\ge q_X$ implies that $p_X>2$ or that
$p_X=2$ with $p_X$ not attained (as $p_X=q_X$ with both $p_X$, $q_X$
attained is impossible). So, Proposition \ref{p2}(a) can be applied
to each of the cases  (i.e., to $q_X\ge2$) to conclude that
\eqref{eq-10} holds, i.e.,
$\sigma_{\mathrm{pt}}(T_X)=\emptyset$.
We apply again  \eqref{eq-10}, Remark \ref{r2} and  the symmetry of
the point spectrum to obtain the identity
$$
\sigma_{\mathrm{r}}(T_X)=\sigma_{\mathrm{pt}}((T_X)^*)=
\sigma_{\mathrm{pt}}(-T_{X'})=\sigma_{\mathrm{pt}}(T_{X'})
$$
as above.  Now, the assumption of (b) gives, again via Lemma
\ref{l4}(b), that $p_{X'}=(q_X)'$ and  $p_{X'}\le 2$ with $p_{X'}$
attained. So, Proposition \ref{p8}(b) applied to $X'$ implies
that
$$
 \sigma_{\mathrm{r}}(T_{X})=\sigma_{\mathrm{pt}}(T_{X'})=\mathcal{R}_{p_{X'}}\setminus\{\pm1\}=
 \mathcal{R}_{({q_X})'}\setminus\{\pm1\}=
 \mathcal{R}_{q_X}\setminus\{\pm1\}.
$$
\end{proof}


The final result of this section treats the case when $0\in\sigma_{\mathrm{c}}(T_X)$.
Proposition \ref{p7}(c) allows its formulation to be expressed  in terms of indices. Namely,
that $p_X>2$ or, that $p_X=2$ with $p_X$ not attained, together with 
$q_{X}\le2$ or, that $q_{X}=2$ with $q_X$ not attained.


\begin{proposition}\label{p10}
Let $X$ be a separable r.i.\ space on $(-1,1)$ such that 
$0<\underline{\alpha}_X\le \overline{\alpha}_X<1$. 
Suppose that  $p_X\ge2\ge q_X$ and, for those cases when either $p_X=2$ 
or $q_X=2$ occur, that they are not attained. Then 
$$
\sigma_{\mathrm{pt}}(T_X) =\sigma_{\mathrm{r}}(T_X)=\emptyset;\; 
\sigma_{\mathrm{c}}(T_X)=\sigma(T_X). 
$$
\end{proposition}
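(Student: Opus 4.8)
The plan is to observe that the hypothesis on $p_X$ and $q_X$ is exactly the index-theoretic reformulation of condition (ii) in Proposition \ref{p7}(c), namely that $|x|^{-1/2}\notin X$ and $|x|^{-1/2}\notin X'$. Once this reformulation is checked, the three asserted identities follow immediately: by Proposition \ref{p7}(c) the equivalence of (ii) with (iii) gives $\sigma(T_X)=\sigma_{\mathrm{c}}(T_X)\supseteq[-1,1]$, and since the fine spectra partition $\sigma(T_X)$ this forces $\sigma_{\mathrm{pt}}(T_X)=\sigma_{\mathrm{r}}(T_X)=\emptyset$ (compare also Remark \ref{r3}(a)). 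Thus the whole content of the proof lies in translating the conditions on the indices into the two non-membership statements.

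First I would record that the set $\{p\in(1,\infty):L^{p,\infty}\subseteq X\}$ is an up-set. Indeed, for $1<p<p'$ one has $L^{p',\infty}\subseteq L^{p,\infty}$ on the finite interval $(-1,1)$, since $t^{-1/p'}\le 2^{1/p-1/p'}\,t^{-1/p}$ for $0<t\le2$; hence $L^{p,\infty}\subseteq X$ forces $L^{p',\infty}\subseteq X$. By the definition \eqref{px} of $p_X$ this up-set is $[p_X,\infty)$ or $(p_X,\infty)$, according to whether $p_X$ is attained. Recalling that $|x|^{-1/2}\in X$ if and only if $L^{2,\infty}\subseteq X$, I conclude that $|x|^{-1/2}\in X$ precisely when $p_X<2$, or $p_X=2$ is attained. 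Negating, the hypothesis that $p_X\ge2$ with $p_X=2$ (when it occurs) not attained is exactly the condition $|x|^{-1/2}\notin X$.

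Next I would run the analogous argument for the associate space $X'$, where the index $q_X$ enters through Lemma \ref{l4}(b): that lemma gives $p_{X'}=(q_X)'$, with $p_{X'}$ attained if and only if $q_X$ is attained. Applying the previous paragraph to $X'$ in place of $X$ shows that $|x|^{-1/2}\in X'$ exactly when $p_{X'}<2$, or $p_{X'}=2$ is attained. Since $p_{X'}<2\iff q_X>2$ and $p_{X'}=2\iff q_X=2$, the hypothesis $q_X\le2$ with $q_X=2$ (when it occurs) not attained transforms into $p_{X'}\ge2$ with $p_{X'}=2$ not attained, that is, into $|x|^{-1/2}\notin X'$.

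With both non-membership statements in hand, condition (ii) of Proposition \ref{p7}(c) is satisfied, and the appeal to that proposition completes the argument. The only step requiring genuine care is the bookkeeping of the attainment clauses together with the correct direction of the inclusion $L^{p',\infty}\subseteq L^{p,\infty}$ on a space of finite measure; apart from that, the proof is a direct consequence of Proposition \ref{p7} and Lemma \ref{l4}, with no new estimate on $T_X$ needed.
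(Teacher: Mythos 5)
Your proof is correct, but it takes a different route from the paper's. The paper proves Proposition \ref{p10} directly from Proposition \ref{p2}(a): the hypothesis on $p_X$ gives $\sigma_{\mathrm{pt}}(T_X)=\emptyset$, then Remark \ref{r2} (with the symmetry of Proposition \ref{p1}) gives $\sigma_{\mathrm{r}}(T_X)=\sigma_{\mathrm{pt}}(T_{X'})$, and the hypothesis on $q_X$, translated through Lemma \ref{l4}(b) into the statement that $p_{X'}>2$ or $p_{X'}=2$ not attained, kills $\sigma_{\mathrm{pt}}(T_{X'})$ by Proposition \ref{p2}(a) applied to $X'$. You instead translate the index hypotheses into the two non-membership conditions $|x|^{-1/2}\notin X$ and $|x|^{-1/2}\notin X'$ and invoke the implication (ii)$\Rightarrow$(iii) of Proposition \ref{p7}(c) as a black box; this is legitimate and non-circular, since Proposition \ref{p7} is established earlier and independently of Proposition \ref{p10}. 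Interestingly, the paper only uses this correspondence with Proposition \ref{p7}(c) to \emph{motivate} the index formulation of the hypothesis (in the paragraph preceding the statement), but never in the proof itself; your argument makes that correspondence the engine of the proof, and your careful up-set argument for $\{p\in(1,\infty):L^{p,\infty}\subseteq X\}$, together with the attainment bookkeeping via Lemma \ref{l4}(b), supplies precisely the translation the paper leaves implicit. What each approach buys: the paper's route is shorter given Proposition \ref{p2}(a) and stays entirely at the level of point spectra and duality; yours yields the slightly stronger conclusion $[-1,1]\subseteq\sigma_{\mathrm{c}}(T_X)=\sigma(T_X)$ for free (which the paper recovers separately via Corollary \ref{c2}), at the cost of relying on the more involved machinery inside Proposition \ref{p7}(c), whose proof uses the pseudo-inverse argument of Proposition \ref{p6}.
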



\begin{proof}
Via Proposition \ref{p2}(a) the condition $p_X>2$ or, the condition $p_X=2$ (not attained), implies that 
$\sigma_{\mathrm{pt}}(T_X)=\emptyset$. This fact, together with
Remark \ref{r2},  then yields that 
$\sigma_{\mathrm{r}}(T_X)=\sigma_{\mathrm{pt}}((T_X)^*)=\sigma_{\mathrm{pt}}(-T_{X'})=
\sigma_{\mathrm{pt}}(T_{X'})$.
The condition $2>q_X$ or the condition $2=q_X$  (not attained)
implies that  $p_{X'}>2$ or $p_{X'}=2$ with $p_{X'}$  not attained (cf. Lemma \ref{l4}(b)), from which it follows that 
$\sigma_{\mathrm{pt}}(T_{X'})=\emptyset$; see Proposition \ref{p2}(a) for $X'$. Accordingly,
$\sigma_{\mathrm{r}}(T_X)=\emptyset$. 
Clearly this fact, together with $\sigma_{\mathrm{pt}}(T_X)=\emptyset$, yields that
$\sigma_{\mathrm{c}}(T_X)=\sigma(T_X)$.
\end{proof}

The converse statement of Proposition \ref{p10} also holds, that is, if $\sigma(T_X)=\sigma_{\mathrm{c}}(T_X)$,
then the assumptions of Proposition \ref{p10} necessarily hold.


\section{The spectrum and fine spectra of $T_X$}
\label{S7}


As alluded in the Introduction, the identification of the continuous spectrum of $T_X$ (and hence, of the full spectrum)  encounters
serious difficulties.

The next result identifies a superset of $\sigma(T_{X})$ which, 
in the case when   the Boyd indices coincide,
will allow us to give a full description of the spectrum and the fine spectra of $T_X$.
Observe that the union of two sets of the form $\mathcal{R}_s$, for $1<s<\infty$, is 
again a set of the same form (obviously, the larger one).


\begin{proposition}\label{p11}
Let $X$ be a separable r.i.\ space on $(-1,1)$ such that $0<\underline{\alpha}_X\le \overline{\alpha}_X<1$.
Then
$$
\sigma(T_{X})\subseteq 
\mathcal{R}_{1/\underline{\alpha}_X}\cup \mathcal{R}_{1/\overline{\alpha}_X}.
$$
\end{proposition}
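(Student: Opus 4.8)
The plan is to adapt the proof of Proposition \ref{p3} from the Lorentz spaces $L^{p,r}$ to a general separable r.i.\ space $X$, replacing the single exponent by the two endpoints $1/\overline{\alpha}_X$ and $1/\underline{\alpha}_X$ read off from the Boyd indices. Writing $\mathcal{R}:=\mathcal{R}_{1/\underline{\alpha}_X}\cup\mathcal{R}_{1/\overline{\alpha}_X}$ (which, as observed just before the statement, is again a region of the form $\mathcal{R}_s$), I would fix $\lambda\notin\mathcal{R}$ and show that $\lambda I-T_X$ is boundedly invertible, so that $\lambda\notin\sigma(T_X)$. Since $\pm1\in\mathcal{R}_s$ for every $s$ by \eqref{rp}, necessarily $\lambda\neq\pm1$, and from \eqref{rp} the quantity $\theta:=\frac{1}{2\pi}\big|\arg\big(\frac{1+\lambda}{1-\lambda}\big)\big|$ satisfies the \emph{strict} inequality $\theta>\max\{|\tfrac12-\underline{\alpha}_X|,\,|\tfrac12-\overline{\alpha}_X|\}$; this strictness is what provides the room exploited below.

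The crux is the choice of two exponents $p_0,p_1$ with $1<p_0<p_1<\infty$ to which both Widom's theorem and Boyd's interpolation theorem can be applied. I would take $p_0$ slightly below $1/\overline{\alpha}_X$ and $p_1$ slightly above $1/\underline{\alpha}_X$. The constraint needed for Boyd's theorem, namely $1/p_1<\underline{\alpha}_X\le\overline{\alpha}_X<1/p_0$, then holds by construction, and it forces $p_0<p_1$ because $1/\overline{\alpha}_X\le 1/\underline{\alpha}_X$; moreover $p_0>1$ since $\overline{\alpha}_X<1$. The delicate requirement is to keep $\lambda$ outside $\mathcal{R}_{p_0}\cup\mathcal{R}_{p_1}$: as $p_0\uparrow 1/\overline{\alpha}_X$ one has $|\tfrac12-1/p_0|\to|\tfrac12-\overline{\alpha}_X|<\theta$, and as $p_1\downarrow 1/\underline{\alpha}_X$ one has $|\tfrac12-1/p_1|\to|\tfrac12-\underline{\alpha}_X|<\theta$, so by continuity the exponents can be chosen close enough to the endpoints that $|\tfrac12-1/p_0|<\theta$ and $|\tfrac12-1/p_1|<\theta$, which by \eqref{rp} says precisely $\lambda\notin\mathcal{R}_{p_0}$ and $\lambda\notin\mathcal{R}_{p_1}$. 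I expect this balancing to be the main obstacle, since the Boyd constraint pushes $p_0,p_1$ away from $2$ and can \emph{enlarge} $\mathcal{R}_{p_0}$ and $\mathcal{R}_{p_1}$ (when an endpoint lies on the opposite side of $2$); the strictness of $\lambda\notin\mathcal{R}$ is exactly what resolves it.

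With $p_0,p_1$ fixed, the assembly follows Proposition \ref{p3}. By Widom's theorem (Theorem \ref{t1}), $\lambda\notin\mathcal{R}_{p_j}=\sigma(T_{p_j})$ for $j=0,1$, so the resolvents $R(\lambda,T_{p_0})\colon L^{p_0}\to L^{p_0}$ and $R(\lambda,T_{p_1})\colon L^{p_1}\to L^{p_1}$ are bounded; as both invert $\lambda I-T$ and $\lambda I-T$ is injective on $L^{p_0}$, they coincide on the common space $L^{p_1}\subseteq L^{p_0}$, yielding a single linear map that is bounded on $L^{p_0}$ and on $L^{p_1}$. Since $1/p_1<\underline{\alpha}_X\le\overline{\alpha}_X<1/p_0$ we have $L^{p_1}\subseteq X\subseteq L^{p_0}$ (cf.\ Lemma \ref{l1}), so Boyd's interpolation theorem \cite[Theorem 2.b.11]{lindenstrauss-tzafriri} produces a bounded operator $S\colon X\to X$ extending this map. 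Finally $S\circ(\lambda I-T_X)=I$ and $(\lambda I-T_X)\circ S=I$ on $L^{p_1}$, which is dense in $X$ because $X$ is separable and $L^\infty\subseteq L^{p_1}$; by continuity these identities pass to all of $X$, so $S=(\lambda I-T_X)^{-1}$ and $\lambda\notin\sigma(T_X)$. As $\lambda\notin\mathcal{R}$ was arbitrary, $\sigma(T_X)\subseteq\mathcal{R}_{1/\underline{\alpha}_X}\cup\mathcal{R}_{1/\overline{\alpha}_X}$.
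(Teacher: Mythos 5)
Your proof is correct and follows essentially the same route as the paper's: fix $\lambda$ outside the union, choose exponents $p_1>1/\underline{\alpha}_X$ and $p_0<1/\overline{\alpha}_X$ close enough to the endpoints that $\lambda\notin\mathcal{R}_{p_0}\cup\mathcal{R}_{p_1}$, embed $L^{p_1}\subseteq X\subseteq L^{p_0}$, and combine Widom's theorem with Boyd's interpolation theorem and a density argument to produce the inverse on $X$. The only cosmetic differences are that the paper derives the embeddings from the indices $p_X,q_X$ via Lemma \ref{l4}(a) (through $L^{p,\infty}$ and $L^{q,1}$) rather than citing Lemma \ref{l1}, and that your explicit check that the two resolvents agree on $L^{p_1}$ (needed for Boyd's theorem to apply to a single well-defined operator) is a detail the paper leaves implicit.
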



\begin{proof}
We   follow the interpolation procedure used in the proof of Proposition \ref{p3}.
Suppose that $\lambda\notin\mathcal{R}_{1/\underline{\alpha}_X}\cup \mathcal{R}_{1/\overline{\alpha}_X}$.

Since $\lambda\notin\mathcal{R}_{1/\underline{\alpha}_X}$, there exists $p$ with 
$1/\underline{\alpha}_X<p<\infty$ such that $\lambda\notin\mathcal{R}_p$. Lemma \ref{l4}(a)  
implies that $\underline{\alpha}_X\le 1/p_X$ (i.e., $p_X\le 1/\underline{\alpha}_X$) and so  $p_X<p$.
According to the  definition of $p_X$ in \eqref{px},  it follows that $L^p\subseteq L^{p,\infty}\subseteq X$.
In a similar way,
since $\lambda\notin\mathcal{R}_{1/\overline{\alpha}_X}$, there exists $q$ with 
$q<1/\overline{\alpha}_X$ such that $\lambda\notin\mathcal{R}_q$. From Lemma \ref{l4}(a) we have 
that $1/\overline{\alpha}_X\le q_X$ which yields $q<q_X$.
Via the definition of $q_X$ in \eqref{qx}, we can conclude that $X\subseteq L^{q,1}\subseteq L^q $.
So,  $L^p\subseteq  X\subseteq L^q$ with  $1/p<\underline{\alpha}_X\le
\overline{\alpha}_X<1/q$.

Noting that $\lambda\notin\mathcal{R}_p$ and  $\lambda\notin\mathcal{R}_q$, Theorem \ref{t1}
implies that  the resolvent operators 
$R(\lambda,T_q):=(\lambda I -T)^{-1}\colon L^q\to L^q$
and $R(\lambda,T_{p}):=(\lambda I -T)^{-1}\colon L^{p}\to L^{p}$
are continuous bijections.
The condition $L^p\subseteq  X\subseteq L^q$ with $1/p<\underline{\alpha}_X\le
\overline{\alpha}_X<1/q$ allows us to apply 
Boyd's interpolation theorem, \cite[Theorem 2.b.11]{lindenstrauss-tzafriri},
to conclude 
that $(\lambda I -T)^{-1}\colon X\to X$ is continuous. Set $S:=(\lambda I-T)^{-1}$. 
Then $S(\lambda I -T_X)=I$ on $L^{p}$ and $(\lambda I -T_X)S=I$ on $L^{p}$.
Since $L^{p}$ is dense in $X$, it follows that $S=(\lambda I -T_X)^{-1}$ on 
$X$. Hence, $\lambda\notin\sigma(T_X)$. 
Consequently, $\sigma(T_X)\subseteq
\mathcal{R}_{1/\underline{\alpha}_X}\cup 
\mathcal{R}_{1/\overline{\alpha}_X}$.
\end{proof}


Due to Proposition \ref{p11}, in the case when 
the lower and upper   Boyd indices of $X$ coincide, we can identify the continuous spectrum of
$T_X$ and hence, also identify the full and the fine spectra of $T_X$. Recall
that $\mathcal{R}_2=[-1,1]$.

\begin{theorem}\label{t4}
Let $X$ be a separable r.i.\ space on $(-1,1)$ 
with non-trivial Boyd indices such that $\underline{\alpha}_X= \overline{\alpha}_X$. Then
\begin{equation}\label{pqx}
p_X=q_X
=1/\underline{\alpha}_X= 1/\overline{\alpha}_X,
\end{equation}
and 
$$
\sigma(T_X)=\mathcal{R}_{p_X}.
$$

Moreover, regarding the fine spectra of $T_X$  we have the following descriptions.
\begin{itemize}
\item [(a)] Let $p_X<2$. 
\begin{itemize}
\item [(a1)] Suppose that $p_X$ is not attained. Then 
$$
\sigma_{\mathrm{pt}}(T_X) =\mathrm{int}(\mathcal{R}_{p_X});\;
\sigma_{\mathrm{r}}(T_X)=\emptyset;\;
\sigma_{\mathrm{c}}(T_X)=\partial\mathcal{R}_{p_X}.
$$
\item [(a2)]   Suppose that $p_X$ is attained. Then 
$$
\sigma_{\mathrm{pt}}(T_X)=\mathcal{R}_{p_X}\setminus\{\pm1\};\;
\sigma_{\mathrm{r}}(T_X)=\emptyset;\;
\sigma_{\mathrm{c}}(T_X)=\{\pm1\}.
$$
\end{itemize}
\item [(b)] Let  $p_X>2$. 
\begin{itemize}
\item [(b1)] Suppose that  $q_X$ is not attained. Then 
$$
\sigma_{\mathrm{pt}}(T_X) =\emptyset;\;	
\sigma_{\mathrm{r}}(T_X)=\mathrm{int}(\mathcal{R}_{p_X});\;
\sigma_{\mathrm{c}}(T_X)=\partial\mathcal{R}_{p_X}.
$$
\item [(b2)]   Suppose that  $q_X$ is attained. Then 
$$
\sigma_{\mathrm{pt}}(T_X)=\emptyset;\; 
\sigma_{\mathrm{r}}(T_X)=  \mathcal{R}_{p_X}\setminus\{\pm1\};\;
\sigma_{\mathrm{c}}(T_X)=
\{\pm1\}.
$$
\end{itemize}
\item [(c)] Let  $p_X=2$. 
\begin{itemize}
\item [(c1)]   Suppose that  $p_X$ is attained. Then 
$$
\sigma_{\mathrm{pt}}(T_X)=(-1,1);\;
\sigma_{\mathrm{r}}(T_X)=\emptyset;\;
\sigma_{\mathrm{c}}(T_X)=\{\pm1\}.
$$
\item [(c2)]   Suppose that  $q_X$ is attained. Then 
$$
\sigma_{\mathrm{pt}}(T_X)=\emptyset;\; 
\sigma_{\mathrm{r}}(T_X)=  (-1,1);\;
\sigma_{\mathrm{c}}(T_X)=
\{\pm1\}.
$$
\item [(c3)]   Suppose that  both $p_X$ and $q_X$ are not attained. Then 
$$
\sigma_{\mathrm{pt}}(T_X) =\sigma_{\mathrm{r}}(T_X)=\emptyset;\;
\sigma_{\mathrm{c}}(T_X)=\mathcal{R}_2=[-1,1].
$$
\end{itemize}
\end{itemize}
\end{theorem}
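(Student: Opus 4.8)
The plan is to assemble the statement from the machinery already developed, handling the chain of equalities, the full spectrum, the point and residual spectra, and finally the continuous spectrum in turn. First I would establish \eqref{pqx}. The hypothesis $\underline{\alpha}_X=\overline{\alpha}_X$ collapses the inequality chain of Lemma \ref{l4}(a), namely
$$
0<\underline{\alpha}_X\le\underline{\beta}_X\le 1/p_X\le 1/q_X\le\overline{\beta}_X\le\overline{\alpha}_X<1,
$$
so every term coincides with the common Boyd index; in particular $1/p_X=1/q_X=\underline{\alpha}_X=\overline{\alpha}_X$, which is \eqref{pqx}. This immediately yields the superset bound: Proposition \ref{p11} gives $\sigma(T_X)\subseteq\mathcal{R}_{1/\underline{\alpha}_X}\cup\mathcal{R}_{1/\overline{\alpha}_X}$, and since the two Boyd indices agree both sets equal $\mathcal{R}_{p_X}$, whence $\sigma(T_X)\subseteq\mathcal{R}_{p_X}$.

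Next I would run the case analysis on the position of $p_X(=q_X)$ relative to $2$, reading off the point and residual spectra from Propositions \ref{p8}, \ref{p9} and \ref{p10}. For $p_X<2$ (case (a)) one has $L^{2,\infty}\subseteq X$, hence $0\in\sigma_{\mathrm{pt}}(T_X)$ by Proposition \ref{p7}(a), so Proposition \ref{p8} applies and gives $\sigma_{\mathrm{pt}}(T_X)=\mathrm{int}(\mathcal{R}_{p_X})$, $\sigma_{\mathrm{r}}(T_X)=\emptyset$ when $p_X$ is not attained (a1), and $\sigma_{\mathrm{pt}}(T_X)=\mathcal{R}_{p_X}\setminus\{\pm1\}$, $\sigma_{\mathrm{r}}(T_X)=\emptyset$ when $p_X$ is attained (a2). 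For $p_X>2$ (case (b)) one has $q_X=p_X>2$, so $X\subseteq L^{2,1}$ and $0\in\sigma_{\mathrm{r}}(T_X)$ by Proposition \ref{p7}(b); then Proposition \ref{p9} gives, using $\mathcal{R}_{q_X}=\mathcal{R}_{p_X}$, the stated residual spectra together with $\sigma_{\mathrm{pt}}(T_X)=\emptyset$. For $p_X=2$ (case (c)) the attainment alternatives must be separated, and here I would invoke the remark preceding Lemma \ref{l4} that $p_X=q_X$ cannot hold with both attained; consequently exactly one of (c1) $p_X$ attained, (c2) $q_X$ attained, (c3) neither attained occurs, and these feed respectively into Proposition \ref{p8}(b), Proposition \ref{p9}(b) and Proposition \ref{p10}, using $\mathcal{R}_2\setminus\{\pm1\}=(-1,1)$.

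Then I would close the spectrum from below and read off $\sigma_{\mathrm{c}}(T_X)$. In cases (a1), (b1) the point or residual spectrum equals $\mathrm{int}(\mathcal{R}_{p_X})$, whose closure is $\mathcal{R}_{p_X}$; since $\sigma(T_X)$ is closed this forces $\mathcal{R}_{p_X}\subseteq\sigma(T_X)$, and combined with the superset bound gives $\sigma(T_X)=\mathcal{R}_{p_X}$. In cases (a2), (b2), (c1), (c2) the point or residual spectrum equals $\mathcal{R}_{p_X}\setminus\{\pm1\}$, and Corollary \ref{c3} supplies $\pm1\in\sigma(T_X)$, again yielding equality; in case (c3) both $\sigma_{\mathrm{pt}}$ and $\sigma_{\mathrm{r}}$ are empty so closures are unavailable, and instead Corollary \ref{c2} gives $[-1,1]\subseteq\sigma(T_X)\subseteq\mathcal{R}_2=[-1,1]$. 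Finally $\sigma_{\mathrm{c}}(T_X)=\sigma(T_X)\setminus(\sigma_{\mathrm{pt}}(T_X)\cup\sigma_{\mathrm{r}}(T_X))$ delivers $\partial\mathcal{R}_{p_X}$ in (a1), (b1), the set $\{\pm1\}$ in (a2), (b2), (c1), (c2), and $[-1,1]$ in (c3). The only genuine subtlety, and the step I would double-check most carefully, is the bookkeeping of the attainment conditions when $p_X=2$: the impossibility of simultaneous attainment is precisely what makes the three subcases exhaustive and mutually exclusive, and it is what routes each into the correct one of Propositions \ref{p8}, \ref{p9}, \ref{p10}; the remainder is assembly of the cited results.
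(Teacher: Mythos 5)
Your proposal is correct and follows essentially the same route as the paper: Lemma \ref{l4}(a) collapses the index chain to give \eqref{pqx}, Proposition \ref{p11} gives the superset $\mathcal{R}_{p_X}$, Propositions \ref{p8}, \ref{p9}, \ref{p10} (with the non-simultaneous-attainment observation routing the $p_X=2$ subcases) give the point and residual spectra, and closedness of the spectrum plus density of $\sigma_{\mathrm{pt}}\cup\sigma_{\mathrm{r}}$ (with Corollary \ref{c2} in case (c3)) closes the argument. Your appeals to Proposition \ref{p7} and Corollary \ref{c3} are harmless but redundant, since Propositions \ref{p8}--\ref{p10} apply directly from the index hypotheses and the closure of $\mathcal{R}_{p_X}\setminus\{\pm1\}$ is already $\mathcal{R}_{p_X}$.
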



\begin{proof}
The equality of the indices in \eqref{pqx} follows from Lemma \ref{l4}(a) and 
$\underline{\alpha}_X= \overline{\alpha}_X$. Proposition \ref{p11} then implies 
that 
\begin{equation}\label{eq-20}
\sigma(T_X)\subseteq  \mathcal{R}_{p_X}.
\end{equation}

Recall that Propositions \ref{p8}, \ref{p9} and \ref{p10} identify, for each of the cases (a), (b) and (c), the 
point spectrum and the residual spectrum of $T_X$. Moreover, in cases (a), (b), (c1) and (c2) 
Propositions \ref{p8} and \ref{p9} ensure that 
$\sigma_{\mathrm{pt}}(T_X) \cup\sigma_{\mathrm{r}}(T_X)$
is dense in $\mathcal{R}_{p_X}$. Hence, it follows from \eqref{eq-20}
that 
$$
\sigma_{\mathrm{c}}(T_X) =\mathcal{R}_{p_X}\setminus 
(\sigma_{\mathrm{pt}}(T_X) \cup\sigma_{\mathrm{r}}(T_X)),
$$
and the results in (a), (b), (c1) and (c2) follow.

For the case (c3) we have $\sigma_{\mathrm{pt}}(T_X) =\sigma_{\mathrm{r}}(T_X)=\emptyset$ from
Proposition \ref{p10}. Regarding the continuous spectrum and the full spectrum, 
note that $\sigma(T_X)\subseteq  \mathcal{R}_2=[-1,1]$ because of \eqref{eq-20} 
and that $\mathcal{R}_2=[-1,1]\subseteq \sigma(T_X)$ via Corollary \ref{c2}.
\end{proof}


The following table provides an overview of the results in  Theorem \ref{t4}, where the issue
of whether  $p_X$ and  $q_X$ are attained or not attained is indicated by a.\  or n.a., respectively.


\bigskip
\begin{center}
\begin{tabular}{ |c | c| c | c |  c| }
\hline
	$\sigma(T_X)=\mathcal{R}_{p_X}$  & a./n.a. &
	$\sigma_{\mathrm{pt}}(T_X)$& $\sigma_{\mathrm{r}}(T_X)$&$\sigma_{\mathrm{c}} (T_X)$
\\ \hline 
   	&&& &
\\ \hline
	$p_X<2$ &n.a. &$\mathrm{int}(\mathcal{R}_{p_X})$ & 
	$\emptyset$ &$\partial\mathcal{R}_{p_X}	$
\\ \hline
   	& a.  & $\mathcal{R}_{p_X}\setminus\{\pm1\}$&$\emptyset$  
	&$\{\pm1\}$
\\ \hline
  	$p_X>2$ & n.a. &$\emptyset$&$\mathrm{int}(\mathcal{R}_{p_X})$ &
  	$\partial\mathcal{R}_{p_X}$
\\ \hline
	&a. &$\emptyset$&$\mathcal{R}_{p_X}\setminus\{\pm1\}$ &
  	$\{\pm1\}$
\\ \hline
	$p_X=2$ & $p_X$ a.  &$(-1,1)$& $\emptyset$& $\{\pm1\}$
\\ \hline
	& $q_X$ a. &$\emptyset$& $(-1,1)$& $\{\pm1\}$
\\ \hline
	& $p_X,q_X$ n.a.  &$\emptyset$& $\emptyset$& $\mathcal{R}_2=[-1,1]$
   	\\ \hline
		
\end{tabular}
\end{center}
\bigskip


The results shown above for the fine spectra  are in agreement  with those obtained
in Section \ref{S4}  for the Lorentz $L^{p,r}$ spaces. We exhibit below further classes of r.i.\
spaces $X$ whose lower and  upper Boyd indices coincide. Hence, for such
spaces $X$, Theorem \ref{t4}
applies and renders a full description  of the spectrum and the fine spectra of $T_X$.


\begin{example}\label{talenti}
Consider the Orlicz space $L^\Phi(-1,1)$ with $\Phi(t):=t^p \exp (\sqrt{1+s\log_{+}t})$ 
for $p>1$ and $s>0$, first
studied by Talenti, \cite{talenti}. For this space the lower  and upper Boyd 
indices coincide and equal $1/p$, \cite[Example 5.8(2)]{fiorenza-krbec}. Thus,  
Theorem \ref{t4} gives a full description of the spectra of $T$ 
acting on $L^\Phi(-1,1)$.

This example belongs to the family of all separable Orlicz spaces $L^\Phi(-1,1)$  whose 
Young function $\Phi$ and its complementary function both satisfy the $\Delta_2$-condition,
and $\Phi$ satisfies the condition
$$
\lim_{t\to0^+}\frac{t\Phi'(t)}{\Phi(t)}=\lim_{t\to+\infty}\frac{t\Phi'(t)}{\Phi(t)},
$$
which implies the equality of the Boyd indices, \cite[Theorem 1.3]{fiorenza-krbec}. 
Again, via Theorem \ref{t4},  a full description of the spectra of $T$ 
acting on $L^\Phi(-1,1)$  is available.
\end{example}

\begin{example}\label{lorentz}
Consider the classical Lorentz space $\Lambda^p(w)$ on $(-1,1)$, for $1\le p<\infty$ with 
$w$ a positive, decreasing and continuous
function on $(0,2)$ satisfying $\lim_{t\to0}w(t)=\infty$. It consists of all functions $f\colon(-1,1)\to\C$
satisfying
$$
\|f\|_{\Lambda^p(w)}:=\left(\int_0^2f^*(t)^pw(t)\,dt\right)^{1/p}<\infty.
$$
The  separable spaces $X=\Lambda^p(w)$ are of fundamental type, that is, 
$\underline{\alpha}_X=\underline{\beta}_X$ and $\overline{\alpha}_X=\overline{\beta}_X$;
see \cite[Theorem 3.1]{feher}.
Hence, the upper and lower Boyd indices are, respectively,
$$
\overline{\alpha}_X=\lim_{t\to\infty}\frac{\overline{W}(t)^{1/p}}{\log t},
\quad
\underline{\alpha}_X=\lim_{t\to0}\frac{\overline{W}(t)^{1/p}}{\log t},
$$
where $W(t):=\int_0^t w(x)\,dx$ and $\overline{W}(t):=\sup_{0<st<2} W(ts)/W(s)$. 
Consider, for example, the weight $w(t)=t^{a-1}\varphi(t)$ for $0<a<1$ and 
with $\varphi$ 
a slowly varying function. Then  the lower  and upper  Boyd indices  of $\Lambda^p(w)$ 
coincide and equal $a$ (see the proof of \cite[Example 2.3]{nazeer-etal}).
So, Theorem \ref{t4} gives a full description of the spectra of $T$ 
acting on $\Lambda^p(w)$.
\end{example}

\begin{example}\label{grand}
The grand Lebesgue space $L^{p)}$, for $1<p<\infty$, was introduced by Iwaniec and Sbordone
in connection with the integrability of the Jacobian, \cite{iwaniec-sbordone}.
It consists of all measurable functions $f$ on $(0,1)$ such that
$$
\sup_{0<\varepsilon<p-1}\left(\varepsilon\int_0^1 |f(x)|^{p-\varepsilon}\,dx
\right)^{1/(p-\varepsilon)}<\infty.
$$
It is a non-separable  r.i.\ space with fundamental function 
$t\to t^{1/p}\log^{-1/p}(1/t)$,  \cite[Theorem 3.7]{fiorenza}. Its associate space is the so-called small 
Lebesgue space $L^{p)'}$ defined in \cite{fiorenza}, which is a separable, non-reflexive r.i.\ space.
These spaces have found a wealth of applications. They can also be defined 
over $(-1,1)$. In \cite[Theorem 2.1]{formica-giova} it is proved, for
$L^{p)}$ and hence, also holds for $L^{p)'}$, that the Boyd indices coincide and equal $1/p$
(in  \cite{formica-giova} a generalized version of these spaces is considered). Hence, again
Theorem \ref{t4} is applicable and provides the spectrum 
and fine spectra of $T$ acting on $L^{p)'}$.
\end{example}


The proof of Theorem \ref{t4} relied on finding an appropriate superset of $\sigma(T_{X})$, via Proposition \ref{p11} 
(whose proof relies on  Boyd's interpolation theorem). It is also possible to find a smaller superset of $\sigma(T_{X})$ 
by using the indices $p_X$ and $q_X$ (in place of $\underline{\alpha}_X, \overline{\alpha}_X$)
and applying a different interpolation theorem. This superset can then be used
to obtain a full description of the spectrum and the fine spectra of $T_X$.


\begin{proposition}\label{p12}
Let $X$ be a separable r.i.\ space on $(-1,1)$ such that $0<\underline{\alpha}_X\le \overline{\alpha}_X<1$.
Suppose that $2>p_X=q_X$ and $q_X$ is attained. If $X$ is an interpolation space between
$L^2$ and $L^{p_X}$, then
$$
\sigma(T_{X})\subseteq 
\mathcal{R}_{p_X}.
$$
\end{proposition}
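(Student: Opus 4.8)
The plan is to reproduce the interpolation argument of Propositions~\ref{p3} and \ref{p11}, but now interpolating between the two endpoint spaces $L^2$ and $L^{p_X}$ rather than between two $L^p$-spaces straddling the Boyd indices. First I would record the inclusions forced by the index hypotheses. Since $p_X<2$, the characterization \eqref{px2} gives $L^2\subseteq X$ (because $2>p_X$), while $X\subseteq L^{q_X,1}\subseteq L^{q_X}=L^{p_X}$ because $q_X=p_X$ is attained in the sense of \eqref{qx}. Thus $L^2\subseteq X\subseteq L^{p_X}$, and, $X$ being separable, $L^\infty$ and hence $L^2$ is dense in $X$.

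Next I would fix $\lambda\notin\mathcal{R}_{p_X}$ and aim to show $\lambda\notin\sigma(T_X)$. As $p_X<2$, the region $\mathcal{R}_{p_X}$ contains $\mathcal{R}_2=[-1,1]$, so $\lambda$ lies outside both $\sigma(T_2)=[-1,1]$ and $\sigma(T_{p_X})=\mathcal{R}_{p_X}$ by Theorem~\ref{t1}. Hence the resolvents $(\lambda I-T_2)^{-1}\colon L^2\to L^2$ and $(\lambda I-T_{p_X})^{-1}\colon L^{p_X}\to L^{p_X}$ are continuous bijections. Since $T$ is a single operator consistent across these spaces, the two resolvents agree on $L^2$: if $g:=(\lambda I-T_2)^{-1}f\in L^2\subseteq L^{p_X}$ for $f\in L^2$, then $(\lambda I-T_{p_X})g=f$, whence $g=(\lambda I-T_{p_X})^{-1}f$. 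Therefore $R:=(\lambda I-T_{p_X})^{-1}$ is a well-defined operator on the ordered couple $(L^2,L^{p_X})$ whose restriction to $L^2$ maps boundedly into $L^2$.

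Now I would invoke the hypothesis that $X$ is an interpolation space between $L^2$ and $L^{p_X}$: as $R$ is bounded on each endpoint, it restricts to a bounded operator $R\colon X\to X$. It remains to identify $R|_X$ with $(\lambda I-T_X)^{-1}$. On the dense subspace $L^2\subseteq X$ one has $R(\lambda I-T_X)=I$ and $(\lambda I-T_X)R=I$, since there these reduce to the $L^2$ resolvent identities. Because all the operators are bounded on $X$, these identities extend by continuity to all of $X$, so $\lambda I-T_X$ is invertible and $\lambda\notin\sigma(T_X)$. As $\lambda\notin\mathcal{R}_{p_X}$ was arbitrary, this gives $\sigma(T_X)\subseteq\mathcal{R}_{p_X}$.

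The endpoint inclusions and the density step are routine. The one genuinely delicate point is the consistency of the resolvent across the couple, that is, checking that $(\lambda I-T_2)^{-1}$ and $(\lambda I-T_{p_X})^{-1}$ coincide on $L^2$, so that $R$ is a single operator on $(L^2,L^{p_X})$ to which the interpolation hypothesis legitimately applies; the rest follows the template of Proposition~\ref{p3}.
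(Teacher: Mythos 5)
Your proof is correct and follows essentially the same route as the paper's: deduce $L^2\subseteq X\subseteq L^{p_X}$ from the index hypotheses, apply Widom's Theorem~\ref{t1} to get the endpoint resolvents, invoke the interpolation hypothesis to bound the resolvent on $X$, and use density of $L^2$ in $X$ to identify it with $(\lambda I-T_X)^{-1}$. Your explicit verification that the $L^2$ and $L^{p_X}$ resolvents agree on $L^2$ is a point the paper leaves implicit (it simply writes $(\lambda I-T)^{-1}$ as a single operator), and making it explicit is a welcome clarification rather than a departure.
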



\begin{proof}
Suppose that $\lambda\notin\mathcal{R}_{p_X}$. Since 
$\mathcal{R}_{p_X}=\mathcal{R}_{q_X}\supseteq\mathcal{R}_2$, it follows that 
$\lambda\notin\mathcal{R}_{q_X}$ and $\lambda\notin\mathcal{R}_2$.
Then Theorem \ref{t1}  implies that  the resolvent operators 
$R(\lambda,T_{q_X}):=(\lambda I -T)^{-1}\colon L^{q_X}\to L^{q_X}$
and $R(\lambda,T_{2}):=(\lambda I -T)^{-1}\colon L^{2}\to L^{2}$
are continuous bijections.
Since $2>p_X=q_X$ with $q_X$  attained, according to the  definition of $p_X$ in \eqref{px}
and to the definition of $q_X$ in \eqref{qx} with  $q_X$
attained, we can conclude that $L^2\subseteq X\subseteq L^{q_X}$.
By the assumptions on  $X$ it is an interpolation space between
$L^2$ and $L^{q_X}$, from which it follows that $(\lambda I -T)^{-1}\colon X\to X$ is continuous. 
Set $S:=(\lambda I-T)^{-1}$. 
Then $S\circ(\lambda I -T_X)=I$ on $L^{2}$ and $(\lambda I -T_X)\circ S=I$ on $L^{2}$.
Since $L^{2}$ is dense in $X$, it follows that $S=(\lambda I -T_X)^{-1}$ on 
$X$. Hence, $\lambda\notin\sigma(T_X)$. 
\end{proof}


Under the conditions of Proposition \ref{p12} we can identify the continuous spectrum of
$T_X$ and hence, also the full spectrum and the fine spectra of $T_X$.

\begin{theorem}\label{t5}
Let $X$ be a separable r.i.\ space on $(-1,1)$ such that $0<\underline{\alpha}_X\le \overline{\alpha}_X<1$.
Suppose that $2>p_X=q_X$ and $q_X$ is attained. If  $X$ is 
an interpolation space between $L^2$ and $L^{p_X}$, then
$$
\sigma(T_X)=\mathcal{R}_{p_X}.
$$

Moreover, regarding the fine spectra of $T_X$  we have the following descriptions.
$$
\sigma_{\mathrm{pt}}(T_X) =\mathrm{int}(\mathcal{R}_{p_X});\;
\sigma_{\mathrm{r}}(T_X)=\emptyset;\;
\sigma_{\mathrm{c}}(T_X)=\partial\mathcal{R}_{p_X}.
$$
\end{theorem}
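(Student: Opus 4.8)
The plan is to combine the superset inclusion already secured in Proposition \ref{p12} with the identification of the point and residual spectra furnished by Section \ref{S6}, and then to read off the continuous spectrum as a set-theoretic complement. First I would invoke Proposition \ref{p12}, whose hypotheses are precisely those assumed here, to obtain
$$
\sigma(T_X)\subseteq \mathcal{R}_{p_X}.
$$
This is the substantive inclusion, resting on interpolation between $L^2$ and $L^{p_X}$; once it is in hand the remainder of the argument is bookkeeping.

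Next I would determine which branch of Proposition \ref{p8} applies. Since $q_X$ is attained and $p_X=q_X$, the observation recorded just before Lemma \ref{l4} — that no r.i.\ space with non-trivial Boyd indices can have both $p_X$ and $q_X$ attained — forces $p_X$ to be \emph{not} attained. Together with the standing hypothesis $p_X<2$, this places us in the situation of Proposition \ref{p8}(a), which yields simultaneously
$$
\sigma_{\mathrm{pt}}(T_X)=\mathrm{int}(\mathcal{R}_{p_X})
\quad\text{and}\quad
\sigma_{\mathrm{r}}(T_X)=\emptyset.
$$

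For the reverse spectral inclusion I would use that $\sigma(T_X)$ is closed and contains the point spectrum, so that
$$
\sigma(T_X)\supseteq\overline{\sigma_{\mathrm{pt}}(T_X)}
=\overline{\mathrm{int}(\mathcal{R}_{p_X})}=\mathcal{R}_{p_X},
$$
the final equality holding because $\mathcal{R}_{p_X}$ is the closure of its interior (as already exploited in the proof of Proposition \ref{p5}). Combined with Proposition \ref{p12} this gives $\sigma(T_X)=\mathcal{R}_{p_X}$. Since the three fine spectra partition the spectrum, the continuous spectrum is then forced:
$$
\sigma_{\mathrm{c}}(T_X)=\sigma(T_X)\setminus\big(\sigma_{\mathrm{pt}}(T_X)\cup\sigma_{\mathrm{r}}(T_X)\big)
=\mathcal{R}_{p_X}\setminus\mathrm{int}(\mathcal{R}_{p_X})=\partial\mathcal{R}_{p_X}.
$$

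The only step demanding any care is verifying that $p_X$ is \emph{not} attained, since this is exactly what selects the correct case of Proposition \ref{p8} (and hence of Proposition \ref{p2}); all the genuine difficulty — the identification of the continuous spectrum, which the Introduction flags as the delicate point — is already absorbed into Proposition \ref{p12}, so I do not anticipate any serious obstacle in assembling the pieces here.
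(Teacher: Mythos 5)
Your proposal is correct and follows essentially the same route as the paper's own proof: Proposition \ref{p12} for the inclusion $\sigma(T_X)\subseteq\mathcal{R}_{p_X}$, the observation that $p_X=q_X$ with $q_X$ attained forces $p_X$ to be not attained, Proposition \ref{p8}(a) for the point and residual spectra, density of $\mathrm{int}(\mathcal{R}_{p_X})$ in $\mathcal{R}_{p_X}$ for the reverse inclusion, and the complement for $\sigma_{\mathrm{c}}(T_X)$. The only difference is cosmetic ordering of the steps, so nothing further is needed.
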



\begin{proof}
Since $q_X$ is attained and $p_X=q_X$, the index $p_X$ is not attained; see Section \ref{S6}.
Since $2>p_X$  and $p_X$ is not attained, Proposition \ref{p8}(a) yields that 
$\sigma_{\mathrm{pt}}(T_X) =\mathrm{int}(\mathcal{R}_{p_X})$ and 
$\sigma_{\mathrm{r}}(T_X)=\emptyset$. But, $\sigma(T_X)\subseteq\mathcal{R}_{p_X}$ (cf. 
Proposition \ref{p12}), and so $\sigma(T_X)=\mathcal{R}_{p_X}$. Consequently,
$\sigma_{\mathrm{c}}(T_X)=\partial\mathcal{R}_{p_X}$,
\end{proof}


The following symmetrical analogues of Proposition \ref{p12} and Theorem \ref{t5} are valid, with  
a similar proof (this time using Proposition \ref{p9}(a)).

\begin{proposition}\label{p13}
Let $X$ be a separable r.i.\ space on $(-1,1)$ such that $0<\underline{\alpha}_X\le \overline{\alpha}_X<1$.
Suppose that $p_X=q_X>2$ and $p_X$ is attained. If  $X$ is an interpolation space between
$L^2$ and $L^{p_X}$, then 
$$
\sigma(T_{X})\subseteq 
\mathcal{R}_{p_X}.
$$
\end{proposition}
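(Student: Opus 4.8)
The plan is to transcribe verbatim the interpolation argument of Proposition \ref{p12}, with the one essential change that the hypothesis $p_X>2$ reverses the endpoint inclusions: on the finite interval $(-1,1)$ one has $L^{p_X}\subseteq L^2$, so $L^{p_X}$ now plays the role that $L^2$ played there, and $L^2$ the role of $L^{p_X}$. Concretely, I would fix $\lambda\notin\mathcal{R}_{p_X}$ and first observe that, since $p_X=q_X>2$, we have $\mathcal{R}_{p_X}=\mathcal{R}_{q_X}\supseteq\mathcal{R}_2=[-1,1]$; hence $\lambda$ lies outside both $\mathcal{R}_{p_X}$ and $\mathcal{R}_2$. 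By Widom's Theorem \ref{t1} this furnishes two continuous resolvent operators $(\lambda I-T)^{-1}\colon L^{p_X}\to L^{p_X}$ and $(\lambda I-T)^{-1}\colon L^2\to L^2$.

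The next step is to pin down the sandwich $L^{p_X}\subseteq X\subseteq L^2$. The left inclusion is immediate from the assumption that $p_X$ is attained: by \eqref{px} this means $L^{p_X,\infty}\subseteq X$, whence $L^{p_X}\subseteq L^{p_X,\infty}\subseteq X$. For the right inclusion I would invoke the observation recorded in Section \ref{S6} that $p_X$ and $q_X$ cannot both be attained, so here $q_X=p_X>2$ is \emph{not} attained; applying the definition \eqref{qx} of $q_X$ to the value $2<q_X$ then gives $X\subseteq L^{2,1}\subseteq L^2$. With $L^{p_X}\subseteq X\subseteq L^2$ established and the standing hypothesis that $X$ is an interpolation space between $L^2$ and $L^{p_X}$, the boundedness of the resolvent on the two endpoints transfers to $X$, producing a continuous operator $S:=(\lambda I-T)^{-1}\colon X\to X$.

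It then remains to confirm that $S$ really is the resolvent of $T_X$, i.e.\ that $S\circ(\lambda I-T_X)=(\lambda I-T_X)\circ S=I$ on all of $X$. These identities hold on $L^{p_X}$ by construction, and I would promote them to $X$ by a density argument: separability of $X$ forces the norm to be absolutely continuous, so $L^\infty$ is dense in $X$, and since $L^\infty\subseteq L^{p_X}$ the smaller endpoint $L^{p_X}$ is itself dense in $X$; continuity of $S$ and of $T_X$ extends the two identities from $L^{p_X}$ to $X$. Consequently $\lambda\notin\sigma(T_X)$, and letting $\lambda$ range over the complement of $\mathcal{R}_{p_X}$ gives $\sigma(T_X)\subseteq\mathcal{R}_{p_X}$.

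The step I expect to require the most care is precisely this bookkeeping of reversed inclusions: unlike in Proposition \ref{p12}, where the dense endpoint was the smaller space $L^2$, here one must check that the higher-exponent space $L^{p_X}$ is both the smaller endpoint \emph{and} dense in $X$, and one must route the inclusion $X\subseteq L^2$ through the non-attainment of $q_X$ rather than through a direct containment in $L^{q_X}$. Once these are in place, the remainder is a direct translation of the proof of Proposition \ref{p12}, with no genuinely new analytic input.
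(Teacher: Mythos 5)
Your proof is correct and takes essentially the same approach the paper intends: the paper gives no separate proof of Proposition \ref{p13}, stating only that it follows by the same interpolation argument as Proposition \ref{p12}, and your transcription with the reversed sandwich $L^{p_X}\subseteq X\subseteq L^{2}$ (with $L^{p_X}$ now the dense smaller endpoint) is exactly that adaptation. One minor remark: your appeal to the non-attainment of $q_X$ is superfluous, since $X\subseteq L^{2,1}\subseteq L^{2}$ already follows from $q_X>2$ alone, the set of $q$ with $X\subseteq L^{q,1}$ being downward closed.
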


\begin{theorem}\label{t6}
Let $X$ be a separable r.i.\ space on $(-1,1)$ such that $0<\underline{\alpha}_X\le \overline{\alpha}_X<1$.
Suppose that $p_X=q_X>2$ and $p_X$ is attained. If $X$ is an interpolation space 
between $L^2$ and $L^{p_X}$, then
$$
\sigma(T_X)=\mathcal{R}_{p_X}.
$$

Moreover, regarding the fine spectra of $T_X$  we have the following descriptions.
$$
\sigma_{\mathrm{pt}}(T_X) =\emptyset;\;
\sigma_{\mathrm{r}}(T_X)=\mathrm{int}(\mathcal{R}_{p_X});\;
\sigma_{\mathrm{c}}(T_X)=\partial\mathcal{R}_{p_X}.
$$
\end{theorem}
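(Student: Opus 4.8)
The plan is to mirror, \emph{mutatis mutandis}, the proof of Theorem \ref{t5}, interchanging the roles of $p_X$ and $q_X$ and replacing Propositions \ref{p8}(a) and \ref{p12} by their dual counterparts, Propositions \ref{p9}(a) and \ref{p13}. First I would settle the attainment question. By hypothesis $p_X=q_X>2$ with $p_X$ attained; since it was noted in Section \ref{S6} that no r.i.\ space with non-trivial Boyd indices can have \emph{both} $p_X$ and $q_X$ attained, it follows that $q_X$ is \emph{not} attained. Thus we are exactly in the regime covered by Proposition \ref{p9}(a), whose hypotheses ``$q_X>2$ and $q_X$ not attained'' are now verified.

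Applying Proposition \ref{p9}(a) directly yields $\sigma_{\mathrm{pt}}(T_X)=\emptyset$ together with $\sigma_{\mathrm{r}}(T_X)=\mathrm{int}(\mathcal{R}_{q_X})$. Since $p_X=q_X$, this is precisely $\sigma_{\mathrm{r}}(T_X)=\mathrm{int}(\mathcal{R}_{p_X})$, which already disposes of the point and residual spectra as claimed. It remains only to pin down the full spectrum and then read off the continuous spectrum as the complement.

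For the spectrum I would combine the superset inclusion with a matching lower bound. Proposition \ref{p13}, whose hypotheses (separability, $p_X=q_X>2$, $p_X$ attained, and $X$ an interpolation space between $L^2$ and $L^{p_X}$) are exactly those assumed here, gives $\sigma(T_X)\subseteq\mathcal{R}_{p_X}$. On the other hand, since $\sigma_{\mathrm{r}}(T_X)\subseteq\sigma(T_X)$ and $\sigma(T_X)$ is closed, we have $\mathcal{R}_{p_X}=\overline{\mathrm{int}(\mathcal{R}_{p_X})}=\overline{\sigma_{\mathrm{r}}(T_X)}\subseteq\sigma(T_X)$, where the first equality uses that for $p_X\neq2$ the lens-shaped region $\mathcal{R}_{p_X}$ coincides with the closure of its interior (the same fact exploited in Proposition \ref{p5}). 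The two inclusions force $\sigma(T_X)=\mathcal{R}_{p_X}$, and then $\sigma_{\mathrm{c}}(T_X)=\sigma(T_X)\setminus(\sigma_{\mathrm{pt}}(T_X)\cup\sigma_{\mathrm{r}}(T_X))=\mathcal{R}_{p_X}\setminus\mathrm{int}(\mathcal{R}_{p_X})=\partial\mathcal{R}_{p_X}$.

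As for difficulty, all the genuinely hard analysis has already been discharged upstream: the residual-spectrum computation rests on the duality $(T_X)^*=-T_{X'}$ with $p_{X'}=(q_X)'$ (Lemma \ref{l4}(b)) feeding into Proposition \ref{p8}(a) for $X'$, while the superset inclusion in Proposition \ref{p13} is the real engine, relying on Widom's resolvent bijections on $L^2$ and $L^{p_X}$ (Theorem \ref{t1}) interpolated across $X$. Consequently the present argument is essentially bookkeeping; the only point demanding a sliver of care is confirming that the interpolation and attainment hypotheses line up so that Propositions \ref{p9}(a) and \ref{p13} are both literally applicable, and that $\overline{\mathrm{int}(\mathcal{R}_{p_X})}=\mathcal{R}_{p_X}$ in the regime $p_X>2$.
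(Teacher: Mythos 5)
Your proposal is correct and is essentially the paper's own argument: the paper proves Theorem \ref{t6} by declaring it the symmetric analogue of Theorem \ref{t5}, ``with a similar proof (this time using Proposition \ref{p9}(a))'', which is precisely what you carry out — deducing that $q_X$ is not attained, invoking Proposition \ref{p9}(a) for the point and residual spectra, Proposition \ref{p13} for the inclusion $\sigma(T_X)\subseteq\mathcal{R}_{p_X}$, and closedness of the spectrum plus $\overline{\mathrm{int}(\mathcal{R}_{p_X})}=\mathcal{R}_{p_X}$ for the reverse inclusion.
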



\begin{remark}\label{r4}  
Interpolation spaces between $L^p$ and $L^q$, for $1<p<q<\infty$, 
have a rather transparent description as spaces which 
are both interpolation spaces between $L^1$ and $L^q$ and
interpolation spaces between $L^p$ and $L^\infty$; see \cite{arazy-cwikel} and
\cite{lorentz-shimogaki}. 
\end{remark}


\begin{remark}\label{r5}
The case of non-separable r.i.\ spaces exhibits further difficulties.
We exemplify this via the non-separable Lorentz $L^{p,\infty}$ spaces  for
$1<p<\infty$. 
The spectrum is $\sigma(T_{p,\infty}) =\mathcal{R}_p $. To see this,
consider  $1<p'<\infty$. An appeal to   Propositions \ref{p3}, 
\ref{p5} and \ref{p6} and the symmetry of the spectrum, yields that 
$\mathcal{R}_p=\mathcal{R}_{p'}=
\sigma(T_{p',1})=\sigma((T_{p',1})^*)=\sigma(-T_{p,\infty})=\sigma(T_{p,\infty})$.

Regarding the point spectrum,     Proposition \ref{p2} is relevant. Note that $p_{L^{p,\infty}}=p$
is attained. For $1<p<2$  we have that 
$\sigma_{\mathrm{pt}}(T_{p,\infty})=\mathcal{R}_p\setminus \{\pm1\}$,  for $p=2$ that
$\sigma_{\mathrm{pt}}(T_{2,\infty})=(-1,1)$, and 
for $2<p<\infty$  that $\sigma_{\mathrm{pt}}(T_{p,\infty})=\emptyset$.
Regarding the residual and continuous  spectra, consider the case when $2<p<\infty$. 
Then $1<p'<2$ with $(T_{p',1})^*=- T_{p,\infty}$ and $\sigma_{\mathrm{pt}}(T_{p,\infty})=\emptyset$. 
Via Remark \ref{r2} and Proposition \ref{p4}(a) it follows
that $\mathrm{int}(\mathcal{R}_p)=\mathrm{int}(\mathcal{R}_{p'})
=\sigma_{\mathrm{pt}}(T_{p',1})\subseteq  \sigma_{\mathrm{r}}(- T_{p,\infty})
=\sigma_{\mathrm{r}}(T_{p,\infty})$, due to symmetry of the residual spectrum.
Thus, $\mathrm{int}(\mathcal{R}_p)\subseteq  \sigma_{\mathrm{r}}(T_{p,\infty})$ which implies that  
$\sigma_{\mathrm{c}}(T_{p,\infty})\subseteq \partial \mathcal{R}_p$.
In the case when $1<p\le 2$, we have that 
$\sigma_{\mathrm{r}}(T_{p,\infty})\cup \sigma_{\mathrm{c}}(T_{p,\infty})=\{\pm1\}$,
which is inconclusive. 
\end{remark}



\end{document}